\declaretheorem[numberwithin=section]{theorem}
\declaretheorem[numberlike=theorem]{lemma}
\declaretheorem[numberlike=theorem]{corollary}
\declaretheorem[numberlike=theorem, style=definition]{definition}
\declaretheorem[numberlike=theorem, style=remark]{remark}
\numberwithin{equation}{section}
\title{Monotone {H}urwitz numbers and the {HCIZ} integral {II}}
\date{\today}
\author{I. P. Goulden}
\address{Department of Combinatorics \& Optimization. University of Waterloo, Canada}
\email{ipgoulden@uwaterloo.ca}
\author{M. Guay-Paquet}
\address{Department of Combinatorics \& Optimization. University of Waterloo, Canada}
\email{mguaypaq@uwaterloo.ca}
\thanks{IPG and MG-P were supported by NSERC}
\author{J. Novak}
\address{Department of Combinatorics \& Optimization. University of Waterloo, Canada}
\email{j2novak@uwaterloo.ca}
\subjclass{Primary 05A15, 14E20; Secondary 15B52}
\date{\today}
\keywords{Hurwitz numbers, matrix models, enumerative geometry}
\newcommand{\QQ}{\mathbb{Q}}
\newcommand{\id}{\mathrm{id}}
\renewcommand{\P}{\mathbb{P}}
\renewcommand{\S}{\mathbf{S}}
\newcommand{\cm}{{\mathcal{M}}}
\newcommand{\cmbar}{\overline{\cm}}
\renewcommand{\cm}{{\mathcal{M}}}
\renewcommand{\cmbar}{\overline{\cm}}
\newcommand{\Aut}{\operatorname{Aut}}
\newcommand{\hur}{H}
\newcommand{\Hur}{\mathbf{H}}
\newcommand{\mon}{\vec{H}}
\newcommand{\Mon}{\vec{\mathbf{H}}}
\newcommand{\Gen}{\mathbf{G}}
\newcommand{\C}{C}
\newcommand{\tr}{\operatorname{tr}}
\newcommand{\D}{\mathrm{D}}
\newcommand{\E}{\mathrm{E}}
\newcommand{\DD}{\mathcal{D}}
\newcommand{\EE}{\mathcal{E}}
\newcommand{\bigoh}{\operatorname{O}(p_i p_j)}
\newcommand{\abs}[1]{\left|{#1}\right|}
\newcommand{\diff}[2][]{\frac{\partial{#1}}{\partial{#2}}}
\newcommand{\sdiff}[3][]{\frac{\partial^2{#1}}{\partial{#2}\partial{#3}}}
\DeclareMathOperator*{\Split}{Split}
\DeclareMathOperator{\T}{T}
\begin{document}

\begin{abstract}
 	Motivated by results for the HCIZ integral in Part I of this paper, we study the structure of monotone Hurwitz numbers, which are a desymmetrized version of 
	classical Hurwitz numbers. We prove a number of results for monotone Hurwitz numbers and their generating series that are striking analogues of known results 
	for the classical Hurwtiz numbers. 
	These include explicit formulas for monotone Hurwitz numbers in genus $0$ and $1$, for all partitions, and an explicit rational form
	 for the generating series in arbitrary genus. This rational form implies that, up to an explicit combinatorial scaling, monotone Hurwitz numbers are polynomial in
	  the parts of the partition.
\end{abstract}

\maketitle

\setcounter{tocdepth}{2}
\tableofcontents

\setcounter{section}{-1}

\section{Introduction}\label{sec:intro}
This paper is a continuation of \cite{GGN}.  In \cite{GGN}, we studied the $N \rightarrow \infty$ asymptotics
of the Harish-Chandra-Itzykson-Zuber matrix model on the group of $N \times N$ unitary matrices, and showed that
the free energy of this matrix model admits an asymptotic expansion in powers of $N^{-2}$ whose coefficients are generating functions for a 
desymmetrized version of the double Hurwitz numbers \cite{GJV,O} which we called the \emph{monotone double
Hurwitz numbers}.  The monotone double Hurwitz number $\mon_g(\alpha,\beta)$ counts a combinatorially restricted subclass of the degree $d$
branched covers $f:C \rightarrow \P^1$ of the Riemann sphere by curves of genus $g$ which have ramification type $\alpha \vdash d$ over $\infty,$
$\beta \vdash d$ over $0,$ and $r=2g-2+\ell(\alpha)+\ell(\beta)$ additional simple branch points at fixed positions on $\P^1,$ the number of which 
is determined by the Riemann-Hurwitz formula.
The results of \cite{GGN} thus prove the existence of an asymptotic expansion in the HCIZ matrix model and
provide a topological interpretation of this expansion, thereby placing the HCIZ model on similar footing with the more developed
theory of topological expansion in Hermitian matrix models \cite{BIZ,BD,BI,EM,G:rigorous}.  See \cite{C,CGM} for previous results in this direction.

In this article, we give a thorough combinatorial analysis of the \emph{monotone single Hurwitz numbers} $\mon_g(\alpha)=\mon_g(\alpha,(1^d)),$ which count 
branched covers as above which are unramified over $0 \in \P^1.$
As explained in our first paper \cite{GGN}, the fixed-genus
generating functions of the monotone single Hurwitz numbers arise as the orders of the genus expansion in the ``one-sided'' HCIZ model.  The one-sided
HCIZ model is obtained when one
of the two sequences of normal matrices which define the HCIZ potential has degenerate limiting moments.\footnote{Note that this cannot happen if one
restricts to potentials defined by Hermitian matrices, since degeneracy would then violate the Hamburger moment criterion.}

Our study of the monotone single Hurwitz numbers $\mon_g(\alpha)$ is motivated by the following result, which is a degeneration
of the main theorem in our first paper \cite[Theorem 0.1]{GGN}.

\begin{theorem}
	\label{thm:Degenerate}
	Let $(A_N),(B_N)$ be two sequences of $N \times N$ normal matrices whose spectral radii are uniformly bounded, with least upper bound
	
		\begin{equation*}
			M:=\sup \ \{\rho(A_N),\rho(B_N) : N \geq 1\},
		\end{equation*}
	
	\noindent
	and which admit limiting moments
		
		\begin{align*}
			-\phi_k &:= \lim_{N \rightarrow \infty} \frac{1}{N} \tr(A_N^k) \\
			-\psi_k &:= \lim_{N \rightarrow \infty} \frac{1}{N} \tr(B_N^k) 
		\end{align*}
		
	\noindent
	of all orders.  Suppose furthermore that the limiting moments of $B_N$ are degenerate: $\psi_k=\delta_{1k}.$
	Let $0 \leq r < r_c,$ where $r_c$ is the critical value
	
		\begin{equation*}
			r_c = \frac{2}{27}.
		\end{equation*}
	
	\noindent
	Then, 
	the free energy $F_N(z)$ of the HCIZ model with potential $V=zN\tr(A_NUB_NU^*)$
	admits an $N \rightarrow \infty$ asymptotic expansion of the form
		\begin{equation*}
			F_N(z) \sim \sum_{g=0}^{\infty} \frac{C_g(z)}{N^{2g}}
		\end{equation*}
		
	\noindent
	which holds uniformly on the closed disc $\overline{D}(0,rM^{-2}).$  Each coefficient $C_g(z)$ is a holomorphic function of $z$ on the open disc
	$D(0,r_cM^{-2}),$ with Maclaurin series
	
		\begin{equation*}
			C_g(z) = \sum_{d=1}^{\infty} C_{g,d} \frac{z^d}{d!},
		\end{equation*}
		
	\noindent
	where 
	
		\begin{equation*}
			C_{g,d} = \sum_{\alpha \vdash d} \mon_g(\alpha) \phi_\alpha
		\end{equation*}
	
	\noindent	
	and $\mon_g(\alpha)$ is the number of $(r+1)$-tuples 
	$(\sigma,\tau_1,\dots,\tau_r)$ of permutations from the symmetric group $\S(d)$ such that
	
		\begin{enumerate}
			
			\medskip
			\item
			$\sigma$ has cycle type $\alpha$ and the $\tau_i$ are transpositions;
			
			\medskip
			\item
			The product $\sigma\tau_1 \dots \tau_r$ equals the identity permutation;
			
			\medskip
			\item
			The group $\langle \sigma,\tau_1,\dots,\tau_r \rangle$ acts transitively on $\{1,\dots,d\}$;
			
			\medskip
			\item
			$r=2g-2+\ell(\alpha)+d$;
			
			\medskip
			\item
			Writing $\tau_i=(s_i\ t_i)$ with $s_i<t_i,$ we have $t_1 \leq \dots \leq t_r.$
			
		\end{enumerate}
\end{theorem}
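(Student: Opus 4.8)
The plan is to obtain Theorem~\ref{thm:Degenerate} as the $\psi_k=\delta_{1k}$ specialization of the main theorem of Part~I, \cite[Theorem 0.1]{GGN}. That theorem treats arbitrary sequences $(A_N),(B_N)$ of normal matrices with uniformly bounded spectral radii and limiting moments $-\phi_k,-\psi_k$ of all orders, and produces an $N\to\infty$ asymptotic expansion $F_N(z)\sim\sum_{g\ge0}C_g(z)N^{-2g}$, uniform on a closed disc about the origin, whose coefficients are holomorphic on a disc and have Maclaurin series $C_g(z)=\sum_{d\ge1}C_{g,d}\,z^d/d!$ with
\[
	C_{g,d}=\sum_{\alpha,\beta\vdash d}\mon_g(\alpha,\beta)\,\phi_\alpha\psi_\beta ,
\]
where $\mon_g(\alpha,\beta)$ is the monotone double Hurwitz number. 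Since the hypotheses of Theorem~\ref{thm:Degenerate} are precisely those of \cite[Theorem 0.1]{GGN} together with the extra condition $\psi_k=\delta_{1k}$, the existence, the uniformity, and the general shape of the expansion are inherited at once; what remains is to (i) identify the combinatorial content of the coefficients in this regime and (ii) pin down the critical radius.

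For (i) I would argue as follows. As $\psi_\beta=\prod_i\psi_{\beta_i}$, the condition $\psi_k=\delta_{1k}$ forces $\psi_\beta=0$ whenever $\beta$ has a part exceeding $1$, while $\psi_{(1^d)}=1$; hence the double sum collapses to $C_{g,d}=\sum_{\alpha\vdash d}\mon_g(\alpha,(1^d))\,\phi_\alpha$, and one sets $\mon_g(\alpha):=\mon_g(\alpha,(1^d))$ as in \cite{GGN}. Unwinding the definition of the monotone double Hurwitz number, $\mon_g(\alpha,(1^d))$ counts the $(r+1)$-tuples $(\sigma,\tau_1,\dots,\tau_r)$ of permutations of $\{1,\dots,d\}$ such that $\sigma$ has cycle type $\alpha$, each $\tau_i$ is a transposition, the product $\sigma\tau_1\cdots\tau_r$ has cycle type $(1^d)$, the subgroup $\langle\sigma,\tau_1,\dots,\tau_r\rangle$ acts transitively, the transpositions satisfy the monotonicity condition~(5), and $r=2g-2+\ell(\alpha)+\ell((1^d))$. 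Because a permutation has cycle type $(1^d)$ exactly when it is the identity and $\ell((1^d))=d$, these are verbatim conditions~(1)--(5) of the statement. This step is pure bookkeeping.

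Point (ii) is where the real work lies. The radius of validity of the expansion in \cite{GGN} is controlled by the location of the nearest singularity of the genus-zero term $C_0(z)$, which \cite{GGN} identifies through an algebraic equation built from the spectral data. I would revisit that equation in the degenerate case $\psi_k=\delta_{1k}$: it then reduces to an explicit algebraic (in fact cubic) functional equation of ternary-tree type, and a routine singularity analysis places its dominant singularity at $z=\frac{2}{27}M^{-2}$, which is the asserted critical value $r_c=\frac{2}{27}$; the algebraic (indeed rational) description of all the $C_g$ in terms of $C_0$ --- furnished by \cite{GGN} and refined in the present paper --- then shows that each $C_g$ is in fact holomorphic on the whole open disc $D(0,r_cM^{-2})$, and not merely on the closed discs on which uniformity was established. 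I expect this last item --- computing the degenerate genus-zero equation explicitly, extracting the constant $\frac{2}{27}$, and verifying that passing to the degenerate limit does not shrink the disc below what Part~I already gives --- to be the only genuinely substantive part of the argument; everything else is specialization and relabelling.
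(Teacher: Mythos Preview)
Your proposal is correct and matches the paper's own treatment: the paper does not give a self-contained proof of Theorem~\ref{thm:Degenerate} but simply introduces it as ``a degeneration of the main theorem in our first paper \cite[Theorem~0.1]{GGN},'' exactly the specialization $\psi_k=\delta_{1k}$ you describe. Your elaboration of points~(i) and~(ii) supplies more detail than the paper does here (in particular the paper does not rederive the constant $r_c=2/27$ in this article), but the strategy is the same.
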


Conditions $(1)-(5)$ above may be taken as the definition of the monotone single Hurwitz numbers $\mon_g(\alpha);$ note that they 
differ from the classical single Hurwitz numbers $\hur_g(\alpha)$ only in the constraint imposed by Condition $(5)$\footnote{
Note that the usual geometric definition of the Hurwitz numbers contains a further division by $d!,$ which is omitted here.}.
According to Theorem \ref{thm:Degenerate}, the coefficients $C_g(z)$ in the $N \rightarrow \infty$ asymptotic expansion of the one-sided
HCIZ free energy are generating functions for the monotone single Hurwitz numbers in fixed genus
and all degrees.  In this article, we study the Witten-type formal generating series of the monotone single Hurwitz numbers
in all degrees and genera, \emph{i.e.} we study the limit object associated to the one-sided HCIZ free energy directly.

\subsection{Main results and organization}
In this paper we study the structure of monotone Hurwitz numbers, and focus in particular on the striking
similarities with classical Hurwitz numbers, which are present in almost every aspect of the theory.
The classical Hurwitz numbers \cite{H1} have enjoyed renewed interest since emerging as central objects in recent approaches to Witten's
conjecture \cite{W}.
Our main results are stated without proof in this section.  They are interleaved with the corresponding results
in the theory of classical Hurwitz numbers in order to emphasize these similarities.

Introduce the generating function

	\begin{equation}
		\Mon(z,t,p_1,p_2,\dots) = \sum_{d=1}^\infty \frac{z^d}{d!} \sum_{r=0}^{\infty} t^r \sum_{\alpha \vdash d} \mon^r(\alpha)p_\alpha,
	\end{equation}
	
\noindent
where $\mon^r(\alpha)=\mon_g(\alpha)$ with $r=2g-2+\ell(\alpha)+d$ and
$z,t,p_1,p_2,\dots$ are indeterminates. 
In Section \ref{sec:joincut}, we provide a 
global characterization of $\Mon$ in a manner akin to Virasoro constraints in random matrix theory: the monotone join-cut equation.

\begin{theorem}
	\label{thm:JoinCut}
  The generating function $\Mon$ is the unique formal power series solution of the 
  partial differential equation
  
  \begin{equation*}
   \frac{1}{2t}\bigg{(} z\diff[\Mon]{z} - z p_1 \bigg{)} = \frac{1}{2} \sum_{i,j \geq 1} (i+j)p_i p_j \diff[\Mon]{p_{i+j}} + ij p_{i+j} \sdiff[\Mon]{p_i}{p_j} + ij p_{i+j} \diff[\Mon]{p_i} \diff[\Mon]{p_j}
  \end{equation*}
  
  \noindent
 with the initial condition $[z^0] \Mon = 0$.
\end{theorem}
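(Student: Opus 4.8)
The plan is to establish the two assertions separately: that the combinatorially defined series $\Mon$ satisfies the displayed equation, and that the equation together with the initial condition has at most one formal power series solution.

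Uniqueness is the easy half and I would dispose of it first. The equation is triangular once one sorts monomials by bidegree in $(t,z)$: writing $\Mon = \sum_{d\ge 1}\frac{z^d}{d!}\sum_{r\ge 0}t^r m_{d,r}$ with each $m_{d,r}$ a polynomial in the $p_i$, the left side contributes $\frac d2 m_{d,r}$ to the coefficient of $\frac{z^d}{d!}t^r$, while the right side involves only the $m_{d,r'}$ with $r'\le r$ together with products $m_{d_1,r_1}m_{d_2,r_2}$ with $d_1,d_2<d$ and $r_1,r_2\le r$. Demanding that the left side be an honest power series forces $[t^0](z\partial_z\Mon-zp_1)=0$, which with $[z^0]\Mon=0$ gives $[t^0]\Mon=zp_1$; all remaining coefficients are then determined by induction, say on $r$ and then on $d$.

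The content is the existence half, which I would prove by a surgery on monotone factorizations, in the spirit of the classical join--cut analysis. Reading off the definitions, the coefficient of $\frac{z^d}{d!}t^r p_\alpha$ on the left is $\frac d2\mon^{r+1}(\alpha)$, which counts monotone transitive factorizations $(\sigma,\tau_1,\dots,\tau_{r+1})$ of the identity in $\S(d)$ with $\sigma$ of cycle type $\alpha$, each with weight $\frac d2$. The structural key is that, since the factorization is transitive and $d\ge 2$ forces the largest letter $d$ to be moved, every transposition whose support contains $d$ has $t_i=d$, so by Condition (5) these occur consecutively at the end of the sequence; in particular $\tau_{r+1}=(s\ d)$ with $s<d$. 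Deleting $\tau_{r+1}$ leaves $(\sigma^\flat,\tau_1,\dots,\tau_r)$ with $\sigma^\flat=(s\ d)\sigma$, again a monotone factorization of the identity, in which $\langle\tau_1,\dots,\tau_r\rangle$ has either one orbit on $\{1,\dots,d\}$ (case A) or two, one containing $s$ and one containing $d$ (case B); there can be no more orbits because adjoining $\tau_{r+1}$ recovers transitivity. In case B, restriction to the two orbits produces two monotone transitive factorizations, and since the larger elements $t_i$ of the transpositions lying in one orbit are disjoint from those in the other, the two sequences re-merge into one monotone sequence in exactly one way, so this decomposition is bijective. Since left multiplication by $(s\ d)$ splits a cycle of $\sigma^\flat$ exactly when $s$ and $d$ lie in a common cycle of $\sigma^\flat$ (equivalently in distinct cycles of $\sigma$) and merges two cycles of $\sigma^\flat$ otherwise, crossing this with A versus B yields exactly three cases: the cut, matched to $(i+j)p_ip_j\,\partial_{p_{i+j}}\Mon$; the join within a single transitive factorization, matched to $ij\,p_{i+j}\,\partial_{p_i}\partial_{p_j}\Mon$; and the join of two previously separate transitive factorizations, matched to $ij\,p_{i+j}\,\partial_{p_i}\Mon\,\partial_{p_j}\Mon$. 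Here the $zp_1$ term inside $\Mon$ correctly supplies the degenerate factorization on a single letter whenever an orbit in case B is a singleton. At the level of constants, $\frac1t$ records the deletion of one transposition, $\frac12$ corrects for the symmetric roles of $i$ and $j$ and for repeated parts, and $z\partial_z$ on the left together with the weights $i+j$ and $ij$ on the right record the choices of the letter $s$ and of the distinguished cells in the affected cycles; the subtracted $zp_1$ removes the empty factorization in $\S(1)$. One also checks that Condition (4), $r=2g-2+\ell(\alpha)+d$, is respected: the genus is unchanged in the cut case, drops by one in the connected join, and is additive in the disconnected join, which follows from tracking $\ell(\alpha)$ through the three cases.

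The step I expect to fight with is making case B and the weight bookkeeping airtight. One must verify that the pair of restricted monotone factorizations, together with the letter $s$, reconstructs the original factorization bijectively — the crucial and easily-fumbled point being uniqueness of the monotone re-merge, which rests precisely on the disjointness of the $t_i$-values of the two orbits — and that translating the three case counts into operations on $p_\alpha$ reproduces the stated right-hand side exactly, with the multiplicities $i$, $j$, $i+j$ and the treatment of partitions with repeated parts all matching rather than being off by a constant. As a sanity check one could verify low-degree coefficients directly, or re-derive the equation from the expansion of the disconnected monotone series over the Jucys--Murphy elements together with the fact that the cut-and-join operator acts diagonally in the Schur basis.
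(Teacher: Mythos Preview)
Your proposal is correct and follows essentially the same approach as the paper: the combinatorial core is precisely the classification of the deleted last transposition into cut, redundant join, and essential join, with the crucial observation that monotonicity forces $d$ to appear in $\tau_{r+1}$ and guarantees unique re-merge in the essential join case. The paper organizes the bookkeeping you worry about by first proving a lifted version of the equation for $\Delta_x\Mon$ (where one cycle of $\sigma$ is marked by an auxiliary variable $x$), which absorbs the factor of $d$ on the left into a sum over marked cycles, and then projecting down via $\Pi_x$ to recover the stated equation; this device is exactly what makes the weight-matching you flag as delicate become mechanical.
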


Note that this is almost exactly the same as the classical join-cut equation \cite{GJ:Hurwitz,GJVainshtein}
	\begin{equation}
		\diff[\Hur]{t}= \frac{1}{2} \sum_{i,j \geq 1} (i+j)p_i p_j \diff[\Hur]{p_{i+j}} + ij p_{i+j} \sdiff[\Hur]{p_i}{p_j} + ij p_{i+j} 
		 \diff	[\Hur]{p_i} \diff[\Hur]{p_j}
	\end{equation}
	
\noindent
which, together with the initial condition $[t^0]\Hur = zp_1,$ characterizes the generating function 

	\begin{equation}
		\Hur(z,t,p_1,p_2,\dots) = \sum_{d=1}^\infty \frac{z^d}{d!} \sum_{r=0}^{\infty} \frac{t^r}{r!} \sum_{\alpha \vdash d} \hur^r(\alpha)p_\alpha
	\end{equation}

\noindent
of the classical Hurwitz numbers, the only difference being that the left-hand side is a divided difference rather than a derivative with respect to $t.$
This is a consequence of the fact that, in the monotone case, $t$ is an ordinary rather than exponential marker for the number $r$ of simple ramification points
since the transpositions $\tau_i$ must be ordered as in Condition (5) of Theorem \ref{thm:Degenerate}.



In Sections \ref{sec:genus-zero} and \ref{sec:higher-genus}, we obtain explicit formulas for the low genus cases $\mon_0(\alpha)$ and $\mon_1(\alpha).$  The first of these is as follows.

\begin{theorem}
	\label{thm:gzformula}
  The genus zero monotone single Hurwitz numbers $\mon_0(\alpha),$ $\alpha \vdash d$ are given by
  \begin{equation*}
   \frac{|\Aut \alpha|}{d!}\mon_0(\alpha) = \bigg{(} \prod_{i=1}^{\ell(\alpha)} {2\alpha_i \choose \alpha_i} \bigg{)}(2d + 1)^{\overline{\ell(\alpha)-3}},
  \end{equation*}
   where
  \begin{equation*}(2d + 1)^{\overline{k}} = (2d + 1) (2d + 2) \cdots (2d + k)\end{equation*}
  denotes a rising product with $k$ factors, and
  by convention 
  
  	\begin{equation*}
		(2d + 1)^{\overline{k}} = \frac{1}{(2d+k+1)^{\overline{-k}}}
	\end{equation*}
  
  \noindent
  for $k<0.$
\end{theorem}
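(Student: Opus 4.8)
The plan is to extract a single-variable functional equation for the genus-zero part of $\Mon$ from the monotone join-cut equation of Theorem~\ref{thm:JoinCut}, solve it explicitly by Lagrange inversion, and then read off the coefficients $\mon_0(\alpha)$ by iterated differentiation in the $p_i$. Concretely, write $\Mon = \sum_{g \ge 0} t^{2g} \Mon_g$ after the substitution that collects terms of fixed genus (recall $r = 2g-2+\ell(\alpha)+d$, so for fixed $\alpha$ the exponent of $t$ and the genus $g$ are in bijection). Extracting the lowest-genus component from the join-cut PDE kills the second-derivative term $ij\,p_{i+j}\,\partial^2\Mon/\partial p_i \partial p_j$ (it raises genus), leaving a first-order nonlinear PDE for $\Mon_0$:
\begin{equation*}
 z\diff[\Mon_0]{z} - z p_1 = \sum_{i,j \ge 1} (i+j) p_i p_j \diff[\Mon_0]{p_{i+j}} + ij\, p_{i+j} \diff[\Mon_0]{p_i}\diff[\Mon_0]{p_j}.
\end{equation*}
The standard device (as in the classical Hurwitz case \cite{GJ:Hurwitz}) is to introduce the generating series $y = y(z)$ obtained by setting all $p_i$ to suitable values, or more precisely to track the ``one-point'' series and the associated functional relation. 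I would guess that the relevant catalytic series $Y = z\,\partial_{p_1}\Mon_0\big|_{p_i = \delta_{i1}\,\text{-type specialization}}$ satisfies a relation of the form $Y = z\, e^{\text{something}(Y)}$ or, given the binomials $\binom{2\alpha_i}{\alpha_i}$ appearing in the answer, more likely $Y = z(1-Y)^{-?}$ — the binomial central coefficients $\binom{2n}{n}$ are exactly what Lagrange inversion on $w = z/(1-2w)$ or $w(1-w)=z$ type equations produces, and the shift $2d+1$ and the Catalan-like critical value $r_c = 2/27$ in Theorem~\ref{thm:Degenerate} strongly suggest the functional equation $w = z(1+2w)^{?}$ with a quadratic or the universal cubic singularity $w^3$-type behaviour. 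I would pin this equation down by checking it against $\mon_0$ for $d=1,2$ directly from the permutation definition.

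The main computation is then: once the functional equation for the basic series is known, express $\Mon_0$ itself (or its needed partial derivatives) in closed form in terms of that series, and apply multivariate Lagrange inversion (Lagrange--Good) to extract $[p_\alpha]$. The factor $\prod_i \binom{2\alpha_i}{\alpha_i}$ should come out as the ``local'' contribution of each part $\alpha_i$ from the inversion kernel, while the rising-factorial factor $(2d+1)^{\overline{\ell(\alpha)-3}}$ is the ``global'' gluing factor depending only on $d$ and the number of parts $\ell=\ell(\alpha)$ — this is the signature of a tree-like structure (genus zero) where $\ell-2$ or $\ell-3$ is an Euler-characteristic count. Concretely I expect to derive an intermediate identity of the shape
\begin{equation*}
 \frac{|\Aut\alpha|}{d!}\,\mon_0(\alpha) = [z^d]\,\frac{1}{\text{(symmetry)}} \prod_{i=1}^{\ell} \big(\text{coeff. extraction in the }i\text{-th part}\big),
\end{equation*}
and the rising factorial will emerge from collecting the Jacobian $\det(\delta_{ij} - \cdots)$ of the Lagrange--Good change of variables, which for a rank-one perturbation is a $1 - (\text{sum})$ and expands into exactly such a product over $\ell$ factors, shifted by the $-3$ coming from the three ``missing'' insertions (the $z\partial_z$, and the two marked points implicit in going from $\Mon_0$ to its second derivative / the Euler characteristic $2 - 2\cdot 0 - \ell$ bookkeeping).

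The step I expect to be the genuine obstacle is \emph{identifying and justifying the correct catalytic specialization and its functional equation} — that is, reducing the infinitely-many-variable PDE to a rigorously-controlled one-variable problem. The subtlety is that $\mon_0(\alpha)$ depends on $\alpha$ only through $\ell(\alpha)$, $d$, and the individual parts via the product of binomials, so the generating-series manipulation must be set up (probably via a change of variables $p_i \mapsto$ power sums of a single new variable, or via the substitution used to prove the join-cut equation itself) so that this separated structure is manifest; getting the PDE to close under this substitution, and verifying uniqueness of the formal solution so that matching the explicit candidate suffices, is where the real work lies. Once the functional equation is in hand, the remaining Lagrange-inversion bookkeeping — including handling the $k<0$ convention for $(2d+1)^{\overline{k}}$ when $\ell(\alpha) \in \{1,2\}$, which should just be the analytic continuation of the same formula and can be checked separately against $\mon_0((d))$ and $\mon_0((\alpha_1,\alpha_2))$ — is routine.
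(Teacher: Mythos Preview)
Your plan shares the essential ingredients with the paper's proof: reduce to the genus-zero join-cut equation, bring in a Lagrangian change of variables, and read off coefficients via multivariate Lagrange inversion. You also correctly anticipate that the second-derivative term drops out in genus zero, that the relevant functional equation is of the $(1-\,\cdot\,)^{-k}$ type, and that the Lagrange--Good Jacobian (a rank-one perturbation of the identity) is what produces the global rising-factorial factor while the product of $\binom{2\alpha_i}{\alpha_i}$ comes from the local per-part kernel.

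Where the paper diverges from your outline is methodology: it does \emph{not} attempt to solve the PDE. Instead it takes the claimed formula as the definition of $\Gen_0$, uses the specific change of variables $p_k = q_k(1-\gamma)^{2k}$ with $\gamma = \sum_{k\ge 1}\binom{2k}{k}q_k$ (equivalently $s = z(1-\gamma)^{-2}$, which is exactly the functional equation you were groping for), and then \emph{verifies} that the resulting $\Delta_x\Gen_0$ satisfies the lifted genus-zero equation~\eqref{ljcgz}. The point is that under Lagrange inversion (Theorem~\ref{thm:LIFT}) the product $\prod\binom{2\alpha_i}{\alpha_i}\cdot(2d+1)^{\overline{\ell-3}}$ is precisely $[q_\alpha]$ of a negative-binomial expansion of $(1-\gamma)^{2-2d}$, so that $(2\DD-2)(2\DD-1)(2\DD)\Gen_0$ has a closed form in $(\gamma,\eta)$. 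From this one integrates back to $\D_k\Gen_0$ and then to $\Delta_x\Gen_0$, which turns out to be an explicit rational function of $u=(1-4\hat x)^{-1/2}$ and its $\Pi_y$-projection; checking~\eqref{ljcgz} is then a rational-function identity. Uniqueness of the solution (already established in Theorem~\ref{thm:jcgenus}) finishes the argument.

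So the step you flagged as the genuine obstacle---identifying the right catalytic reduction---is indeed the crux, and your instinct about its shape is correct. What you are missing concretely is that the reduction is not a specialization of the $p_i$ to scalars but a \emph{multivariate} substitution $p_k\mapsto q_k(1-\gamma)^{2k}$ keeping all variables, together with the observation that guess-and-verify is dramatically easier than deriving the formula from the PDE. Your proposed route of directly solving for a one-variable catalytic series and then reconstructing all $\mon_0(\alpha)$ from it would require additional work to justify that the specialization determines the full series; the paper's verification bypasses that issue entirely.
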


\noindent
In the extremal cases $\alpha=(1^d)$ and $\alpha=(d),$ this result was previously obtained by Zinn-Justin \cite{Z} and Gewurz and Merola \cite{GM}, respectively.
Theorem \ref{thm:gzformula} should be compared with the well-known explicit formula for the genus zero Hurwitz numbers

	\begin{equation}
		\frac{|\Aut \alpha|}{d!}\hur_0(\alpha) = (d-2+\ell(\alpha))! \bigg{(} \prod_{i=1}^{\ell(\alpha)} \frac{\alpha_i^{\alpha_i}}{\alpha_1!} \bigg{)}
		d^{\ell(\alpha)-3} 
	\end{equation}
published without proof by Hurwitz \cite{H1} in 1891 and independently rediscovered and proved a century later by Goulden and 
Jackson \cite{GJ:Hurwitz}.  

The explicit formula for genus one monotone Hurwitz numbers is as follows.

\begin{theorem}\label{thm:goformula}
  The genus one monotone single Hurwitz numbers $\mon_1(\alpha),$ $\alpha \vdash d$ are given by
  \begin{equation*}
  	\begin{split}
    & \frac{|\Aut \alpha|}{d!}\mon_1(\alpha) = \frac{1}{24}\prod_{i=1}^{\ell(\alpha)} {2\alpha_i \choose \alpha_i} \\
   & \times \bigg{(}(2d + 1)^{\overline{\ell(\alpha)}} - 
    3 (2d + 1)^{\overline{\ell(\alpha) - 1}} - \sum_{k = 2}^{\ell(\alpha)} (k - 2)! (2d + 1)^{\overline{\ell(\alpha) - k}} e_k(2\alpha + 1)\bigg{)},
    \end{split}
  \end{equation*}
  where $e_k(2\alpha + 1)$ 
  is the $k$th elementary symmetric polynomial in $2\alpha_i+1,1 \leq i \leq \ell(\alpha).$
\end{theorem}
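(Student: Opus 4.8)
The plan is to feed the claimed formula into a genus-graded form of the monotone join-cut equation and invoke its uniqueness, using the genus-zero formula of Theorem~\ref{thm:gzformula} as the only external input.

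\emph{Step 1: a genus-one join-cut equation.} Decompose $\Mon=\sum_{g\geq 0}\Mon^{(g)}$ by genus; this makes sense because the term indexed by $(\alpha,g)$ has $t$-exponent $r=2g-2+\ell(\alpha)+|\alpha|$, so $g$ is recoverable from the $t$-, $z$- and $p$-degrees. Substituting into Theorem~\ref{thm:JoinCut} and keeping track of how the three operators on the right affect the genus — exactly the Riemann--Hurwitz count behind Theorem~\ref{thm:JoinCut}: the ``join'' operator $\sum(i+j)p_ip_j\,\partial_{p_{i+j}}$ preserves genus, the ``handle'' operator $\sum ij\,p_{i+j}\,\partial_{p_i}\partial_{p_j}$ raises it by one, and the quadratic ``cut'' term is additive in genus — one extracts the genus-one component. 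Writing $\Phi_g:=\sum_{\alpha}\frac{z^{|\alpha|}}{|\alpha|!}\mon_g(\alpha)p_\alpha$ (equivalently $\Mon^{(g)}$ after the rescaling $z\mapsto zt$, $p_i\mapsto tp_i$ that clears all powers of $t$), this component is
\begin{equation*}
z\diff[\Phi_1]{z}=\sum_{i,j\geq 1}\Big[(i+j)p_ip_j\diff[\Phi_1]{p_{i+j}}+ij\,p_{i+j}\sdiff[\Phi_0]{p_i}{p_j}+2\,ij\,p_{i+j}\diff[\Phi_0]{p_i}\diff[\Phi_1]{p_j}\Big].
\end{equation*}
Its right side is linear in $\Phi_1$ and otherwise involves only $\Phi_0$, which Theorem~\ref{thm:gzformula} gives in closed form. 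Since $z\,\partial_z$ acts as multiplication by the $z$-degree, the handle term involves only $\Phi_0$, the cut term involves $\Phi_1$ only at strictly smaller $z$-degree, and the join term (which preserves $z$-degree) involves $\Phi_1$ only at partitions with fewer parts, the equation determines $\Phi_1$ uniquely; equivalently, uniqueness descends from Theorem~\ref{thm:JoinCut}. It therefore suffices to check the claimed formula against this equation.

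\emph{Step 2: carrying out the verification.} I would work in the uniformizing variable $w=w(z)$ with $w=z(1+w)^2$, in which $\sum_{n\geq 1}\binom{2n}{n}z^n=(1+w)/(1-w)$ and in which $\Phi_0$ together with its first and second $p$-derivatives — restricted to a diagonal $p_i=xy^i$, or repackaged as the relevant one- and two-part functions — become explicit rational functions of $w$ and the auxiliary variables (up to a single logarithm in $\Phi_0$ itself). The genus-one equation then collapses to a first-order linear ODE in $w$ that integrates in closed form, after which a Lagrange--B\"urmann inversion against $w=z(1+w)^2$ recovers the coefficient $\frac{|\Aut\alpha|}{d!}\mon_1(\alpha)$: the product $\prod_i\binom{2\alpha_i}{\alpha_i}$ comes from the individual parts, the rising factorials $(2d+1)^{\overline{m}}$ from the binomial coefficients $[w^{\,\bullet}](1+w)^{2d+\bullet}$ produced by the inversion, and the combination $\sum_k(k-2)!\,(2d+1)^{\overline{\ell-k}}e_k(2\alpha+1)$ from expanding a factor of the shape $\prod_i(1+(2\alpha_i+1)u)$ that surfaces along the way. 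A parallel route, staying with the $p_i$, is to read the displayed equation coefficient-by-coefficient into a recursion — $\mon_1(\alpha)$ in terms of $\mon_1(\beta)$ with $\ell(\beta)=\ell(\alpha)-1$ (join), known $\mon_0$-data (handle), and $\mon_1(\beta)$ with $|\beta|<|\alpha|$ (cut) — and prove the formula by induction on $|\alpha|$ and, within that, on $\ell(\alpha)$, with base case $\ell(\alpha)=1$ the one-part evaluation of Gewurz and Merola (which itself drops out of the recursion, the join term being vacuous). The value at $\alpha=(1^d)$ obtained by Zinn-Justin is a convenient independent check.

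\emph{Step 3: the main obstacle.} Once the claimed expression is known to satisfy the equation of Step~1, the uniqueness noted there closes the argument. The real work is Step~2: in the generating-function route, the bookkeeping in passing to a single-variable ODE and back through Lagrange inversion; in the inductive route, the symmetric-function identity by which the handle and cut contributions combine to produce precisely the constants $1$, $-3$, $(k-2)!$ and the elementary symmetric polynomials $e_k(2\alpha+1)$ — in particular, explaining the appearance of these specific and somewhat unexpected combinatorial factors. Everything outside that identity is formal manipulation.
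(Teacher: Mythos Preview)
Your Step~1 is correct: the genus-one component of the monotone join-cut equation is as you write, it agrees with equation~\eqref{ljchg} at $g=1$ after projection, and your uniqueness argument by induction on $(|\alpha|,\ell(\alpha))$ is valid. The gap is that Step~2 is a sketch, not a computation. You describe two possible routes and explicitly label the required identity ``the main obstacle,'' but carry out neither. As written this is a strategy, not a proof.

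The sketch also understates what is needed. Your uniformization $w=z(1+w)^2$ is univariate, while $\Phi_1$ lives in $\QQ[[z,p_1,p_2,\ldots]]$; restricting to a diagonal $p_i=xy^i$ does not recover $\mon_1(\alpha)$ for arbitrary $\alpha$, and ``repackaging as one- and two-part functions'' does not obviously close up under the join operator. The paper's version of your route~(a) uses instead the multivariate Lagrangian change $p_k\mapsto q_k=p_k(1-\gamma)^{-2k}$ with $\gamma=\sum_k\binom{2k}{k}q_k$ (Sections~\ref{sec:change}--\ref{sec:genus-zero}), which turns the genus-zero input into explicit rational expressions in $u=(1-4\hat x)^{-1/2}$ and lets one solve the genus-one equation to the closed form $\Gen_1=\tfrac{1}{24}\log(1-\eta)^{-1}-\tfrac{1}{8}\log(1-\gamma)^{-1}$. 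The explicit formula of Theorem~\ref{thm:goformula} is then read off by multivariate Lagrange inversion (Theorem~\ref{thm:LIFT}) together with the operators $\Theta_k=\binom{2k}{k}^{-1}\partial_{q_k}$; it is precisely in expanding $(1-\eta)\log(1-\eta)^{-1}=\eta-\sum_{k\geq 2}\eta^k/(k(k-1))$ and applying the product rule to $\Theta_\alpha(\eta^k\gamma^{\ell-k}/k!(\ell-k)!)$ that the factors $(k-2)!$ and the polynomials $e_k(2\alpha+1)$ emerge. Your route~(b) --- direct inductive verification of the closed formula against the recursion --- is in principle viable, but the symmetric-function identity you isolate as the obstacle \emph{is} the content of the theorem, and you give no mechanism for establishing it.
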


\noindent
This result should be compared with
the explicit formula for the genus one classical Hurwitz numbers $H_1(\alpha)$, 

	\begin{equation}
		\begin{split}
		&\frac{|\Aut \alpha|}{d!} \hur_1(\alpha)= \frac{(d+\ell(\alpha))!}{24} \prod_{i=1}^{\ell(\alpha)} \frac{\alpha_i^{\alpha_i}}{\alpha_i!} \\
		&\times \left( d^{\ell (\alpha)} -d^{\ell (\alpha)-1} -\sum_{k=2}^{\ell (\alpha)} (k-2)! d^{\ell (\alpha) -k} e_k(\alpha ) \right),
		\end{split}
	\end{equation}

\noindent
which was conjectured in \cite{GJVainshtein} 
and proved by Vakil \cite{V}, see also \cite{GJ:torus}.  

Via Theorem \ref{thm:Degenerate},
theorems \ref{thm:gzformula} and \ref{thm:goformula} yield explicit forms for the first two orders in the free energy 
of the one-sided HCIZ model.  These results may be compared with the first two orders in the free energy of the Hermitian
one-matrix model, which for example in the case of cubic vertices were conjectured by Br\'ezin, Itzykson, Parisi and Zuber in \cite{BIPZ}
and rigorously verified in \cite{BD}.

In Section \ref{sec:higher-genus}, we obtain explicit forms for the fixed-genus generating functions 

	\begin{equation}
		\Mon_g(z,p_1,p_2,\dots) = \sum_{d=1}^{\infty} \mon_g(\alpha)p_\alpha \frac{z^d}{d!}
	\end{equation}

\noindent
for the monotone single Hurwitz numbers in terms of an implicit set of Lagrangian variables.

\begin{theorem}\label{thm:goseries}
  Let $s$ be the unique formal power series solution of the functional equation
  \begin{equation*}
    s = z \left( 1 -\gamma\right)^{-2}
  \end{equation*}
  in the ring $\QQ[[z,p_1,p_2,\dots]]$, where $\gamma = \sum_{k \geq 1} \binom{2k}{k} p_k s^k$. 
  Also, define $\eta = \sum_{k \geq 1} (2k + 1) \binom{2k}{k}p_k s^k$.  Then,
  the genus one monotone single Hurwitz generating series is
  \begin{equation*}
    \Mon_1(z,p_1,p_2,\dots) = \tfrac{1}{24} \log \frac{1}{1 - \eta} - \tfrac{1}{8} \log \frac{1}{1-\gamma} 
    =\tfrac{1}{24} \log \bigg{(} \bigg{(} \frac{z}{s} \bigg{)}^2 \frac{\partial s}{\partial z} \bigg{)}\bigg{)},
  \end{equation*}
  and for $g \geq 2$ we have
  
  	\begin{equation*}
    \Mon_g(z,p_1,p_2,\dots) = -c_{g,(0)}+
  \frac{1}{(1 - \eta)^{2g - 2}} \sum_{d = 0}^{3g - 3} \sum_{\alpha \vdash d} \frac{c_{g,\alpha} \eta_\alpha}{(1 - \eta)^{\ell(\alpha)}},
  \end{equation*}
 
 \noindent
 where 
 
 \begin{equation*}
    \eta_j = \sum_{k \geq 1} k^j (2k + 1) \binom{2k}{k} p_k s^k, \quad j \geq 1,
  \end{equation*}  
  
  \noindent
  and the 
 $c_{g,\alpha}$ are rational constants.  In particular, for the empty partition,
 
 \begin{equation*}
 	c_{g,(0)}= -\frac{B_{2g}}{4g(g-1)}
 \end{equation*}
 
\noindent
where $B_{2g}$ is a Bernoulli number.
  
\end{theorem}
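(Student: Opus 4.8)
The plan is to extract the structure of $\Mon_g$ recursively from the monotone join-cut equation of Theorem~\ref{thm:JoinCut}, working throughout in Lagrangian variables. First I would record the genus expansion $\Mon(z,t,p)=\sum_{g\ge 0}t^{2g-2}\Mon_g(tz,tp_1,tp_2,\dots)$, which is immediate from the relation $r=2g-2+\ell(\alpha)+d$, substitute it into the join-cut equation, and collect powers of $t$. This produces a triangular hierarchy: the lowest-order part is a first-order nonlinear PDE for $\Mon_0$ alone, while for $g\ge 1$ the contribution is \emph{linear} in $\Mon_g$ with an inhomogeneous term built polynomially out of $\Mon_0,\dots,\Mon_{g-1}$ and their first and second $p$-derivatives.

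Next I would solve the genus-zero equation by the method of characteristics / Lagrange inversion (as in Section~\ref{sec:genus-zero}), which yields the functional equation $s=z(1-\gamma)^{-2}$ and, after a short computation, the chain-rule identities $z\,\partial_z s=s(1-\gamma)/(1-\eta)$ and $\partial_{p_k}s=2\binom{2k}{k}s^{k+1}/(1-\eta)$. These identities are the engine of the whole argument: they convert $z\partial_z$ and $\partial_{p_k}$ into operators on $\QQ[[s,p_1,p_2,\dots]]$, in which $\gamma$, $\eta$, and more generally $\eta_j=\sum_k k^j(2k+1)\binom{2k}{k}p_k s^k$ are the natural coordinates, with $s\partial_s$ raising the index $j$ in $\eta_j$ (and $\eta=\gamma+2\gamma_1$, etc., linking them to $\gamma$). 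Feeding the genus-zero solution into the linear genus-one equation and integrating gives the stated formula for $\Mon_1$; its equality with $\tfrac{1}{24}\log\bigl((z/s)^2\partial_z s\bigr)$ is then just the identity $(z/s)^2\partial_z s=(1-\gamma)^3/(1-\eta)$, which follows from the chain-rule formulas together with $z/s=(1-\gamma)^2$.

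For $g\ge 2$ I would argue by induction on $g$. Assuming $\Mon_0,\dots,\Mon_{g-1}$ are known in the stated Lagrangian form, I rewrite the genus-$g$ equation in the variables $(s,p)$: the inhomogeneous term becomes a rational function of $1-\eta$ whose numerator is a polynomial in the $\eta_j$'s, and a careful accounting of the total degree in the $\eta_j$'s — tracked through the two derivatives and the quadratic term on the right-hand side of the equation — shows this polynomial is supported on partitions $\alpha$ with $|\alpha|\le 3g-3$, and that all dependence on $\gamma$ alone cancels. Since in Lagrangian coordinates the left-hand operator acts essentially as $s\partial_s$ up to the factor $1-\gamma$, solving for $\Mon_g$ is a single integration that preserves the shape $\frac{1}{(1-\eta)^{2g-2}}\sum_{\alpha}\frac{c_{g,\alpha}\eta_\alpha}{(1-\eta)^{\ell(\alpha)}}$; the additive constant of integration is fixed by $[z^0]\Mon_g=0$, i.e.\ by evaluation at $s=0$, where every $\eta_j$ with $j\ge 1$ vanishes and only the empty-partition term survives, producing exactly the $-c_{g,(0)}$ displayed in front.

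The main obstacle is pinning down the value $c_{g,(0)}=-B_{2g}/(4g(g-1))$: the recursion alone only guarantees it is some rational number, and the Bernoulli number must enter through an independent input. I would isolate it via the one-variable specialization $p_2=p_3=\dots=0$, under which $s=z(1-2p_1 s)^{-2}$, $\gamma=2p_1 s$, $\eta=6p_1 s$, and $\eta_j=6p_1 s$ for all $j\ge 1$, so the right-hand side of the theorem collapses to an explicit rational function of the single Lagrangian variable $u=6p_1 s$ (a combination of powers of $1-u$ plus the constant $-c_{g,(0)}$), with $u$ determined by $u(1-u/3)^2=6zp_1$. Matching this against the closed form for $\sum_{d}\mon_g((1^d))\,(zp_1)^d/d!$ — equivalently, recognising this reduction as the genus-$g$ term of a unitary-matrix integral governed by the Barnes $G$-function, whose $z^{2-2g}$ asymptotic coefficient is precisely $B_{2g}/(4g(g-1))$ — forces the stated value. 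Alternatively, one can feed the specialization directly into the genus-$g$ equation, solve the resulting ordinary differential equation in $u$ explicitly, and read off $c_{g,(0)}$ from the constant term of the solution.
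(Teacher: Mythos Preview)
Your overall architecture --- genus-expand the monotone join-cut equation of Theorem~\ref{thm:JoinCut}, solve the nonlinear genus-zero piece by Lagrange inversion to produce $s$ and the Lagrangian variables, then for each $g\ge 1$ solve a linear inhomogeneous equation for $\Mon_g$ by induction --- is precisely the paper's strategy, and your chain-rule identities for $z\partial_z s$ and $\partial_{p_k}s$ are the same ones the paper packages as the change of variables $p_k\leftrightarrow q_k=p_k(1-\gamma)^{-2k}$ in Section~\ref{sec:change}.

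The gap is the sentence ``in Lagrangian coordinates the left-hand operator acts essentially as $s\partial_s$ up to the factor $1-\gamma$, so solving for $\Mon_g$ is a single integration.'' This is where the real work lies, and it is not that simple: after the genus split the linear operator on $\Mon_g$ still carries the cut term $\sum_{i,j}(i+j)p_ip_j\,\partial_{p_{i+j}}$ together with the coupling $2\sum_{i,j}ij\,p_{i+j}(\partial_{p_i}\Mon_0)\,\partial_{p_j}$, and neither of these becomes a multiple of $s\partial_s$ under the change of variables. The paper circumvents this by working not with $\Mon_g$ but with the \emph{lifted} series $\Delta_x\Gen_g=\sum_k kx^k\partial_{p_k}\Gen_g$, which lives in a polynomial ring in an auxiliary variable $u=(1-4\hat x)^{-1/2}$; on that ring the lifted linear operator factors as $\tfrac{1-\eta}{u}(1-\T)$ with $\T$ an explicit operator that strictly lowers the $u$-degree and is hence locally nilpotent, so the inverse is a finite geometric series $\sum_k\T^k$ (Theorem~\ref{thm:wwwform}). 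This is what produces both the membership $\Delta_x\Gen_g\in(u^3-u)(1-\eta)^{1-2g}P[u^2]$ and the weight bound $3g-2$, and the cancellation of $\gamma$ you assert for $g\ge 2$ emerges only after this lifted computation (the right-hand side is divisible by $(u^3-u)^2$, forcing the $[u^0]$ term to vanish in Theorem~\ref{thm:egggform}). Only then does one project down and perform a genuine one-variable integration against $\sum_k\E_k$ to recover $\Gen_g$. Your plan is missing this lifting step, and without it the inversion is not a ``single integration.''

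For the constant $c_{g,(0)}$ your route is different from the paper's but workable in principle. The paper does not specialize to $\alpha=(1^d)$ and invoke Barnes-$G$ asymptotics; instead it uses the \emph{one-part} case $\alpha=(d)$, for which Matsumoto--Novak give a closed formula containing the factor $\bigl(\sinh(z/2)/(z/2)\bigr)^{2d-2}$, and setting $d=0$ there yields $B_{2g}/\bigl(4g(g-1)\bigr)$ directly. Your $(1^d)$ route would need an equally explicit input for $\sum_d\mon_g((1^d))(zp_1)^d/d!$; the allusion to unitary integrals and the Barnes $G$-function is the right provenance, but you would have to make that chain precise before the constant can actually be read off.
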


These explicit forms for $\Mon_g$ should be compared with the analogous explicit forms for the 
generating series

	\begin{equation}
		\Hur_g(z,p_1,p_2,\dots) = \sum_{d=1}^{\infty} \sum_{\alpha \vdash d} \frac{\hur_g(\alpha)}{(2g-2+\ell(\alpha)+d)!}p_\alpha \frac{z^d}{d!}
	\end{equation}
	
\noindent
for the classical single Hurwitz numbers.
Adapting notation from previous works \cite{GJ:torus, GJ:rationality, GJV:GromovWitten} in order to highlight this analogy, 
 let $w$ be the unique formal power series solution of the functional equation
  \begin{equation*}
    w = z e^{\delta}
  \end{equation*}
  in the ring $\QQ[[z,p_1,p_2,\dots]]$, where $\delta = \sum_{k \geq 1} \frac{k^k}{k!} p_k w^k$. 
  Also, define $\phi = \sum_{k \geq 1}  \frac{k^{k+1}}{k!} p_k w^k$.  Then,
  the genus one single Hurwitz generating series is~\cite{GJ:torus}
  \begin{equation*}
    \Hur_1(z,p_1,p_2,\dots) = \tfrac{1}{24} \log \frac{1}{1 - \phi} - \tfrac{1}{24}\delta
    =\tfrac{1}{24} \log \bigg{(} \bigg{(} \frac{z}{w} \bigg{)}^2 \frac{\partial w}{\partial z} \bigg{)}\bigg{)},
  \end{equation*}
  
  \noindent
  and for $g \geq 2$ we have \cite{GJV:GromovWitten}
  
  	\begin{equation}
		\label{eqn:HurwitzRational}
    \Hur_g(z,p_1,p_2,\dots) = 
  \frac{1}{(1 - \phi)^{2g - 2}} \sum_{d = 1}^{3g - 3} \sum_{\alpha \vdash d} \frac{a_{g,\alpha} \phi_{\alpha}}{(1 - \phi)^{\ell(\alpha)}},
  \end{equation}
 
 \noindent
 where 
 
 \begin{equation*}
    \phi_j = \sum_{k \geq 1} \frac{k^{k+j+1}}{k!}p_k w^k, \quad j \geq 1,
  \end{equation*}  
  
  \noindent
  and the 
 $a_{g,\alpha}$ are rational constants.
 
 For genus $g=2,3$, these rational forms are given in the Appendix of this paper.  The corresponding
 rational forms for the classical Hurwitz series can be found in \cite{GJ:rationality}.
 Comparing these expressions, one may observe that in all cases
 
 	\begin{equation}
		\label{eqn:PowerScaling}
 	c_{g,\alpha} = 2^{3g-3} a_{g,\alpha}, \quad \alpha\vdash 3g-3,
 	\end{equation}
 
 \noindent	
 for $g=2,3$. 
  
A key consequence of Theorem \ref{thm:goseries}, also proved in Section \ref{sec:higher-genus}, is that it implies the polynomiality of the monotone 
single Hurwitz numbers themselves.

\begin{theorem}\label{thm:polynomial}
 To each pair $(g,m)$ with $(g,m) \notin \{(0,1), (0,2)\}$ there corresponds a polynomial $\vec{P}_g$ in $m$ variables such that
  \begin{equation*}
    \frac{|\Aut \alpha|}{|\alpha|!}\mon_g(\alpha) = \bigg{(}\prod_{i=1}^m {2\alpha_i \choose \alpha_i}\bigg{)} \vec{P}_g(\alpha_1,\dots,\alpha_m)
  \end{equation*}
  for all partitions $\alpha$ with $\ell(\alpha)=m.$
\end{theorem}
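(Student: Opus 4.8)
\emph{Proof proposal.} The plan is to extract $\tfrac{|\Aut\alpha|}{d!}\mon_g(\alpha)$ from the rational forms of Theorem~\ref{thm:goseries} by Lagrange inversion. First dispose of low genus. For $g=0$ and $g=1$, Theorems~\ref{thm:gzformula} and~\ref{thm:goformula} already express $\tfrac{|\Aut\alpha|}{d!}\mon_g(\alpha)$ (with $d=|\alpha|$, $m=\ell(\alpha)$) as $\prod_{i=1}^m\binom{2\alpha_i}{\alpha_i}$ times, respectively, the rising product $(2d+1)^{\overline{m-3}}$ and the bracketed expression of Theorem~\ref{thm:goformula}; since $d=\sum_i\alpha_i$ and each $e_k(2\alpha+1)$ is a polynomial in the parts, these are genuine polynomials in $\alpha_1,\dots,\alpha_m$ exactly when all the rising products involved have a nonnegative number of factors --- i.e.\ for every $m\ge1$ when $g=1$, and for $m\ge3$ when $g=0$ --- which is the claimed range. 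So it remains to treat $g\ge2$.

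Fix $g\ge2$ and a nonempty $\alpha\vdash d$ with $\ell(\alpha)=m$, and write $\Mon_g(z,p)=G_g(s,p)$ for the rational expression of Theorem~\ref{thm:goseries} (a formal power series in $s$, with no $s$-constant term). From $s=z(1-\gamma)^{-2}$ and the identity $\gamma+2s\,\partial_s\gamma=\eta$ (immediate from the definitions of $\gamma$ and $\eta$), the second form of Lagrange inversion, $[z^d]H(s(z))=[s^d]\big(H(s)\phi(s)^d(1-s\phi'(s)/\phi(s))\big)$ with $\phi=(1-\gamma)^{-2}$ and hence $1-s\phi'/\phi=\tfrac{1-\eta}{1-\gamma}$, gives
\begin{equation*}
\frac{|\Aut\alpha|}{d!}\mon_g(\alpha)=|\Aut\alpha|\,[z^d p_\alpha]\Mon_g=|\Aut\alpha|\,[s^d p_\alpha]\!\left((1-\eta)\,G_g(s,p)\,(1-\gamma)^{-(2d+1)}\right).
\end{equation*}
By Theorem~\ref{thm:goseries}, $(1-\eta)G_g$ is a finite $\QQ$-linear combination of monomials $(1-\eta)^{-N}\eta_{j_1}\cdots\eta_{j_r}$ with $r$, $N$, and the $j_i$ all bounded in terms of $g$. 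Expand $(1-\eta)^{-N}=\sum_{n\ge0}\binom{N+n-1}{n}\eta^n$ and $(1-\gamma)^{-(2d+1)}=\sum_{n'\ge0}\binom{2d+n'}{n'}\gamma^{n'}$, and note that each of $\eta=\eta_0$, the $\eta_j$, and $\gamma$ is $\QQ$-linear in the $p_k$, of the form $\sum_{k\ge1}\binom{2k}{k}q(k)\,p_k s^k$ with $q$ a polynomial ($q(k)=(2k+1)k^j$ for $\eta_j$, $q(k)=1$ for $\gamma$).

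Now extract $[s^d p_\alpha]$. Since each of the $r+n+n'$ linear-in-$p$ factors supplies exactly one $p$-letter, only the terms with $r+n+n'=m$ contribute; this bounds $n,n'\le m$, so we are left with a finite sum, and on each surviving term the $s$-degree is automatically $\sum_i\alpha_i=d$, making the $[s^d]$ redundant. For one such product $L_1\cdots L_m$ of linear factors (with weight polynomials $q_1,\dots,q_m$), the coefficient $[p_\alpha](L_1\cdots L_m)$ is $s^d$ times a sum over functions $v\colon\{1,\dots,m\}\to\ZZ_{>0}$ of type $\alpha$ of $\prod_t\binom{2v(t)}{v(t)}q_t(v(t))$; here $\prod_t\binom{2v(t)}{v(t)}=\prod_{i=1}^m\binom{2\alpha_i}{\alpha_i}$ is \emph{independent} of $v$, while $|\Aut\alpha|\sum_{v\text{ of type }\alpha}\prod_t q_t(v(t))=\sum_{\sigma\in\S(m)}\prod_t q_t(\alpha_{\sigma(t)})$ is visibly a symmetric polynomial in $\alpha_1,\dots,\alpha_m$. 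The ``container'' coefficients $\binom{N+n-1}{n}$ and $\binom{2d+n'}{n'}$ are polynomials in the bounded integers $n,n'$ and in $d=\sum_i\alpha_i$, and the $c_{g,\alpha'}$ are rational constants. Summing the finitely many contributions and cancelling the common factor $s^d$ yields $\tfrac{|\Aut\alpha|}{d!}\mon_g(\alpha)=\prod_{i=1}^m\binom{2\alpha_i}{\alpha_i}\,\vec P_g(\alpha_1,\dots,\alpha_m)$ with $\vec P_g$ a polynomial depending only on $g$ and $m$, as required.

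The computation has no deep step, but two points need care. The first is the Lagrange-inversion setup: one must use the $1/d$-free second form --- so that no spurious $d$ survives in a denominator --- and verify $\gamma+2s\,\partial_s\gamma=\eta$, which is what turns $1-s\phi'/\phi$ into the clean factor $(1-\eta)/(1-\gamma)$. The second is the bookkeeping in the previous paragraph: checking that each part $\alpha_i$ contributes exactly one central binomial regardless of which factor it is routed to, and that the prefactor $|\Aut\alpha|$ exactly absorbs the symmetry of distributing the parts among the factors (the identity $|\Aut\alpha|\sum_{v\text{ of type }\alpha}=\sum_{\sigma\in\S(m)}$), so that what remains is polynomial. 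I would expect the latter --- confirming that all the combinatorial factors assemble this cleanly --- to be the most error-prone part to write out in full.
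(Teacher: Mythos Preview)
Your proposal is correct and follows essentially the same route as the paper: both handle $g\le 1$ via the explicit formulas, and for $g\ge 2$ both apply Lagrange inversion to the rational form of Theorem~\ref{thm:goseries}, picking up the Jacobian factor $(1-\eta)/(1-\gamma)^{2d+1}$, then expand in $\gamma,\eta,\eta_j$ and observe that only the terms with exactly $m$ linear factors survive, each contributing a $\prod_i\binom{2\alpha_i}{\alpha_i}$ times a symmetric polynomial in the parts. The paper packages the coefficient extraction through its multivariate LIFT (Theorem~\ref{thm:LIFT}) and the operators $\Theta_k$ of Lemma~\ref{lem:coex}, whereas you use the single-variable second-form Lagrange formula and your $|\Aut\alpha|\sum_{v\text{ of type }\alpha}=\sum_{\sigma\in\S(m)}$ identity directly; these are cosmetic differences in the same argument.
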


\noindent
Theorem \ref{thm:polynomial} is the exact analogue of polynomiality, originally conjectured in \cite{GJVainshtein}, for the classical Hurwitz numbers, which
under the same hypotheses as in Theorem \ref{thm:polynomial} asserts the existence of polynomials $P_g$ in $m$ variables
such that
	
	\begin{equation}
		\frac{|\Aut \alpha|}{|\alpha|!}\hur_g(\alpha) = (2g-2+m+|\alpha|)! \bigg{(}\prod_{i=1}^m \frac{\alpha_i^{\alpha_i}}{\alpha_i!}
		\bigg{)} P_g(\alpha_1,\dots,\alpha_m)
	\end{equation}
	
\noindent
for all partitions $\alpha$ with $m$ parts.  The only known proof of this result relies on the ELSV formula \cite{ELSV}

	\begin{equation}\label{elsvf}
		P_{g}(\alpha_1,\dots,\alpha_m) = \int_{\cmbar_{g,m}} \frac {1 - \lambda_1 + \cdots + (-1)^g \lambda_g}{ (1 - \alpha_1 \psi_1) \cdots (1 - \alpha_m \psi_m)}.
	\end{equation}
	
\noindent
Here $\cmbar_{g,m}$ is the (compact) moduli space of stable
$m$-pointed genus $g$ curves, $\psi_1$, $\dots,$ $\psi_m$ are
(complex) codimension $1$ classes corresponding to the $m$ marked
points, and $\lambda_k$ is the (complex codimension $k$) $k$th Chern class
of the Hodge bundle.
Equation~\eqref{elsvf} should be interpreted as follows:
formally invert the denominator as a geometric series;
select the terms of codimension~$\dim \cmbar_{g,m}=3g-3+m$;
and ``intersect'' these terms on $\cmbar_{g,m}$.
In contrast to this, our proof of Theorem \ref{thm:polynomial} is entirely algebraic and makes no use of
geometric methods.

A geometric approach to the monotone Hurwitz numbers would be highly desirable.
The form of the rational expression given in Theorem \ref{thm:goseries}, in particular its high degree of similarity with the corresponding
rational expression \eqref{eqn:HurwitzRational} for the generating series of the classical Hurwitz numbers, suggests the possibility of an ELSV-type formula
for the polynomials $\vec{P}_g.$  Further evidence in favour of such a formula is that the summation sets differ only by a contribution
from the empty partition, which is itself a scaled Bernoulli number, of known geometric significance.   Finally, observe that the ELSV formula
implies that the coefficients $a_{g,\alpha}$ in the rational form \eqref{eqn:HurwitzRational} are themselves Hodge integral evaluations,
and for the top terms $\alpha \vdash 3g-3$ these Hodge integrals are free of $\lambda$-classes --- the Witten case.  
Equation~\eqref{eqn:PowerScaling}, which deals precisely with the case $\alpha \vdash 3g-3$, might be a good starting point for the formulation 
of such a geometric result.

Section \ref{sec:joincut} closes with two results left unstated here, since they are of a more technical nature than those summarized above. 
These are a topological recursion
in the style of Eynard and Orantin \cite{EO}, and a join-cut equation for the monotone double Hurwitz series.  Unlike the classical case \cite{GJV},
the join-cut equation for the monotone double Hurwitz numbers does \emph{not} coincide with the join-cut equation for the single 
monotone Hurwitz numbers.


\subsection{Acknowledgements}
	It is a pleasure to acknowledge helpful conversations with our colleagues Sean Carrell and David Jackson, Waterloo, 
	and Ravi Vakil, Stanford.  J. N. would like to acknowledge email correspondence with Mike Roth, Queen's.
	The extensive numerical computations required in this project were performed using Sage, and
	its algebraic combinatorics features developed by the Sage-Combinat community.

\section{Join-cut analysis}\label{sec:joincut}


In this section, we analyse the effect of removing the last factor in a transitive monotone factorization. From this, we obtain a recurrence relation for the number of these factorizations, and differential equations which characterize some related generating series.

\subsection{Recurrence relation}\label{sec:recurrence}

Let $M^r(\alpha)$ be defined by 

	\begin{equation}
		\mon^r(\alpha) =|C_\alpha| M^r(\alpha).
	\end{equation}
	
\noindent
It follows from the centrality of symmetric functions of Jucys-Murphy elements, see \cite{GGN}, that
$M^r(\alpha)$ counts the number of $(r+1)$-tuples $(\sigma_0,\tau_1,\dots,\tau_r)$ satisfying conditions $(1)-(5)$ of Theorem \ref{thm:Degenerate}, where
$\sigma_0$ is a fixed but arbitrary permutation of cycle type $\alpha.$

\begin{theorem}\label{thm:recur}
  The numbers $M^r(\alpha)$ are uniquely determined by the initial condition
  \begin{equation}
    M^0(\alpha) = \begin{cases}
      1 &\text{if $\alpha = \varepsilon$}, \\
      0 &\text{otherwise},
    \end{cases}
  \end{equation}
  and the recurrence
  \begin{multline}\label{recurrence}
    M^{r+1}(\alpha \cup \{k\})
      = \sum_{k' \geq 1} k' m_{k'}(\alpha) M^r(\alpha \setminus \{k'\} \cup \{k + k'\}) \\
      {} + \sum_{k' = 1}^{k - 1} M^r(\alpha \cup \{k', k - k'\}) \\
      {} + \sum_{k' = 1}^{k - 1} \sum_{r' = 0}^r \sum_{\alpha' \subseteq \alpha} M^{r'}(\alpha' \cup \{k'\}) M^{r - r'}(\alpha \setminus \alpha' \cup \{k - k'\}),
  \end{multline}
  where $m_{k'}(\alpha)$ is the multiplicity of $k'$ as a part in the partition $\alpha$,
  and the last sum is over the $2^{\ell(\alpha)}$ subpartitions $\alpha'$ of $\alpha$.
\end{theorem}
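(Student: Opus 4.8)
The plan is to analyze what happens to a transitive monotone factorization $(\sigma_0, \tau_1, \dots, \tau_{r+1})$ of a fixed permutation $\sigma_0$ of cycle type $\alpha \cup \{k\}$ when we remove the last transposition $\tau_{r+1} = (s\ t)$ with $s < t$. Since $\sigma_0 \tau_1 \cdots \tau_{r+1} = \id$, we have $\sigma_0 \tau_1 \cdots \tau_r = \tau_{r+1}$, so multiplying on the right by $\tau_{r+1}$ gives $(\sigma_0 \tau_{r+1}) \tau_1 \cdots \tau_r = \id$; the monotonicity constraint $t_1 \leq \dots \leq t_r \leq t_{r+1} = t$ is inherited by the truncated sequence, so $(\sigma_0\tau_{r+1}, \tau_1, \dots, \tau_r)$ is a monotone factorization (of length $r$) of $\sigma_0 \tau_{r+1}$, except that it need not be transitive. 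First I would fix the convention that the removed transposition acts on the two largest elements touched — more precisely, since we are counting factorizations of a \emph{fixed} $\sigma_0$, I would choose $\sigma_0$ so that the cycle of length $k$ is $(1\ 2\ \cdots\ k)$ using the smallest available symbols; the key point, justified by centrality of symmetric functions of Jucys–Murphy elements (as invoked just before the statement), is that the count $M^{r+1}(\alpha \cup \{k\})$ does not depend on this choice, and moreover one may further reduce to counting only those factorizations in which $\tau_{r+1}$ moves the symbol $k$ (the largest symbol in the distinguished $k$-cycle), at the cost of a factor that is absorbed into the $k$ and $k'$ weights appearing in the recurrence. The three terms on the right-hand side will correspond to the three structural possibilities for how $\tau_{r+1} = (s\ t)$ interacts with the cycle structure of $\sigma_0$.

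The case analysis is the combinatorial heart of the argument. Write $\tau = \tau_{r+1} = (s\ t)$. There are three cases according to the relationship between $s$, $t$ and the cycles of $\sigma_0$. \textbf{Join of two cycles into the $k$-cycle (first term):} $s$ and $t$ lie in two different cycles of $\sigma_0\tau$; multiplying by $\tau$ merges them, so in $\sigma_0$ these are one cycle, which must be the distinguished cycle of length $k$, and it splits into a cycle of some length $k'$ (a part of $\alpha$) and the new distinguished cycle of length $k - k'$... wait — I need to be careful about direction. Since we go from $\sigma_0\tau$ (length $r$) \emph{up} to $\sigma_0$ (length $r+1$) by right-multiplication by $\tau$, and $\sigma_0 = (\sigma_0\tau)\tau$, the transposition $\tau$ either splits a cycle of $\sigma_0\tau$ into two cycles of $\sigma_0$ or joins two cycles of $\sigma_0\tau$ into one cycle of $\sigma_0$. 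The distinguished part $k$ of $\alpha \cup \{k\}$ (in $\sigma_0$) is where the action is localized; by the reduction above, $t = k$ lies in this cycle. \emph{Case A (cut):} $\tau$ cuts the $k$-cycle out of a longer cycle of $\sigma_0\tau$ — i.e., in $\sigma_0\tau$ there was a cycle of length $k + k'$ containing both $k$ and the symbol $s$, and applying $\tau$ splits off the $k$-cycle leaving a $k'$-cycle; summing over which existing $k'$-part of $\alpha$ absorbed the extra, and accounting for the $k'$ choices of position within that cycle, gives $\sum_{k'} k'\, m_{k'}(\alpha)\, M^r(\alpha \setminus \{k'\} \cup \{k + k'\})$. \emph{Case B:} $\tau$ joins two cycles of $\sigma_0\tau$, of lengths $k'$ and $k - k'$ (one of which is distinguished), into the $k$-cycle of $\sigma_0$; here both pieces come from the \emph{same} connected factorization, so it is a single $M^r$ term, giving $\sum_{k'=1}^{k-1} M^r(\alpha \cup \{k', k - k'\})$. \emph{Case C:} $\tau$ joins two cycles of $\sigma_0\tau$ that lie in \emph{different} connected components, of total sizes yielding parts $k'$ and $k - k'$ of the distinguished cycle and a splitting $\alpha = \alpha' \sqcup (\alpha \setminus \alpha')$ of the remaining parts between the two components, and additionally the transposition $\tau$ can be inserted at any of the $r$ positions — no wait, $\tau$ is fixed as the last factor, but the $r$ \emph{other} transpositions must be distributed between the two components as an ordered shuffle... actually, because of monotonicity the relative order within each component is forced and the interleaving is determined by the values $t_i$, so the distribution is a \emph{set partition} of the $r$ transpositions, equivalently a choice of $r'$ from $0$ to $r$ together with the choice of \emph{which} $r'$ — but monotonicity again forces this, so it reduces to summing over $r'$. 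This gives the third term $\sum_{k'=1}^{k-1}\sum_{r'=0}^{r}\sum_{\alpha' \subseteq \alpha} M^{r'}(\alpha' \cup \{k'\})\, M^{r-r'}(\alpha \setminus \alpha' \cup \{k - k'\})$.

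After the case analysis, I would assemble the three contributions, verify that the transitivity of the original factorization is equivalent to the disjunction of the three cases (in Cases A and B the whole factorization is automatically connected once the pieces are, while in Case C connectivity is exactly what forces the two-factor product structure), and confirm the initial condition $M^0(\alpha) = [\alpha = \varepsilon]$ directly from the definition. Uniqueness of the solution is immediate by induction on $r$: the recurrence expresses $M^{r+1}$ in terms of $M^{r'}$ with $r' \leq r$ only. \textbf{The main obstacle} I anticipate is the careful bookkeeping in Case C — specifically, showing that the monotonicity condition $t_1 \leq \dots \leq t_r$ makes the shuffle of the two sub-factorizations \emph{uniquely} reconstructible from the two pieces (so that no shuffle-counting binomial coefficient appears), and simultaneously that every monotone factorization of the disconnected permutation $\sigma_0\tau$ arises from a \emph{unique} such pair; this is exactly the point where the ordinary (rather than exponential) nature of the $t$-marker, noted in the discussion after Theorem \ref{thm:Degenerate}, does its work, and it must be checked that the absence of the $1/r!$ normalization in $\Mon$ is consistent with summing over $r'$ without a $\binom{r}{r'}$. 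A secondary subtlety is the reduction "$\tau_{r+1}$ moves the largest symbol $k$ of the distinguished cycle," which uses conjugation-invariance of the count but requires tracking how the weight $k$ (resp. $k'$) in the first and third terms arises as the number of ways of choosing the second symbol $s$ of $\tau_{r+1}$ within the relevant cycle.
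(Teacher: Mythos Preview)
Your overall strategy --- remove the last transposition and split into three cases (the paper's \emph{cut}, \emph{redundant join}, and \emph{essential join}) --- is exactly the paper's, and your resolution of what you call the ``main obstacle'' in Case~C is right: monotonicity makes the interleaving of the two sub-factorizations unique, so one sums over $r'$ with no binomial coefficient.

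The setup, however, has a real gap. You place the distinguished $k$-cycle on the \emph{smallest} symbols $\{1,\dots,k\}$ and then propose to ``reduce to counting only those factorizations in which $\tau_{r+1}$ moves the symbol $k$ \dots\ at the cost of a factor.'' This cannot work. For any transitive monotone factorization on $\{1,\dots,d\}$ with $d>1$, transitivity forces the element $d$ to appear in some $\tau_i$, and monotonicity then forces $b_{r+1}=d$. Under your convention $d$ is \emph{not} in the $k$-cycle, so $\tau_{r+1}$ never touches the cycle you have singled out, and no simple compensating factor recovers the recurrence in terms of $k$. The paper's fix is one line: choose $\sigma_0$ so that $d$ itself lies in the $k$-cycle. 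Then $b_{r+1}=d$ is automatic, the case analysis localizes at the $k$-cycle for free, and the weights $k'\,m_{k'}(\alpha)$ and the $k-1$ implicit in the second sum arise simply as the number of admissible choices for $a_{r+1}$. (There is also a small algebraic slip: from $\sigma_0\tau_1\cdots\tau_{r+1}=\id$ one obtains $\tau_1\cdots\tau_r=\sigma_0^{-1}\tau_{r+1}$, so the shortened target permutation is $\tau_{r+1}\sigma_0$, not $\sigma_0\tau_{r+1}$; this is harmless for the cycle-type bookkeeping since the two are conjugate, but should be corrected.)
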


\begin{proof}
  As long as the initial condition and the recurrence relation hold, uniqueness follows by induction on $r$. The initial condition follows from the fact that for $r = 0$ we must have $\sigma = \id$, and the identity permutation is only transitive in $S_1$.
  
  To show the recurrence, fix a permutation $\sigma \in S_d$ of cycle type $\alpha \cup \{k\}$, where the element $d$ is in a cycle of length $k$, and consider a transitive monotone factorization
  \begin{equation}\label{fact1}
    (a_1 \, b_1) (a_2 \, b_2) \cdots (a_r \, b_r) (a_{r+1} \, b_{r+1}) = \sigma.
  \end{equation}
  The transitivity condition forces the element $d$ to appear in some transposition, and the monotonicity condition forces it to appear in the last transposition, so it must be that $b_{r+1} = d$. If we move this transposition to the other side of the equation and set $\sigma' = \sigma (a_{r+1} \, b_{r+1})$, we get the shorter monotone factorization
  \begin{equation}\label{fact2}
    (a_1 \, b_1) (a_2 \, b_2) \cdots (a_r \, b_r) = \sigma'.
  \end{equation}
  Depending on whether $a_{r+1}$ is in the same cycle of $\sigma'$ as $b_{r+1}$ and whether \eqref{fact2} is still transitive, the shorter factorization falls into exactly one of the following three cases, corresponding to the three terms on the right-hand side of the recurrence.
  \begin{description}
    \item[Cut]
      Suppose $a_{r+1}$ and $b_{r+1}$ are in the same cycle of $\sigma'$. Then, $\sigma$ is obtained from $\sigma'$ by cutting the cycle containing $a_{r+1}$ and $b_{r+1}$ in two parts, one containing $a_{r+1}$ and the other containing $b_{r+1}$, so $(a_{r+1} \, b_{r+1})$ is called a \emph{cut} for $\sigma'$, and also for the factorization \eqref{fact1}. Conversely, $a_{r+1}$ and $b_{r+1}$ are in different cycles of $\sigma$, and $\sigma'$ is obtained from $\sigma$ by joining these two cycles, so the transposition $(a_{r+1} \, b_{r+1})$ is called a \emph{join} for $\sigma$. Note that in the case of a cut, \eqref{fact2} is transitive if and only if \eqref{fact1} is transitive.
      
      For $k' \geq 1$, there are $k' m_{k'}(\alpha)$ possible choices for $a_{r+1}$ in a cycle of $\sigma$ of length $k'$ other than the one containing $b_{r+1}$. For each of these choices, $(a_{r+1} \, b_{r+1})$ is a cut and $\sigma'$ has cycle type $\alpha \setminus \{k'\} \cup \{k + k'\}$. Thus, the number of transitive monotone factorizations of $\sigma$ where the last factor is a cut is
      \begin{equation}
        \sum_{k' \geq 1} k' m_{k'}(\alpha) M^r(\alpha \setminus \{k'\} \cup \{k + k'\}),
      \end{equation}
      which is the first term in the recurrence.
    
    \item[Redundant join]
      Now suppose that $(a_{r+1} \, b_{r+1})$ is a join for $\sigma'$ and that \eqref{fact2} is transitive. Then, we say that $(a_{r+1} \, b_{r+1})$ is a \emph{redundant join} for \eqref{fact1}.
      
      The transposition $(a_{r+1} \, b_{r+1})$ is a join for $\sigma'$ if and only if it is a cut for $\sigma$, and there are $k - 1$ ways of cutting the $k$-cycle of $\sigma$ containing $b_{r+1}$. Thus, the number of transitive monotone factorizations of $\sigma$ where the last factor is a redundant join is
      \begin{equation}
        \sum_{k' = 1}^{k - 1} M^r(\alpha \cup \{k', k - k'\}),
      \end{equation}
      which is the second term in the recurrence.
    
    \item[Essential join]
      Finally, suppose that $(a_{r+1} \, b_{r+1})$ is a join for $\sigma'$ and that \eqref{fact2} is not transitive. Then, we say that $(a_{r+1} \, b_{r+1})$ is an \emph{essential join} for \eqref{fact1}. In this case, the action of the subgroup $\langle (a_1 \, b_1), \ldots, (a_r \, b_r) \rangle$ must have exactly two orbits on the ground set, one containing $a_{r+1}$ and the other containing $b_{r+1}$. Since transpositions acting on different orbits commute, \eqref{fact2} can be rearranged into a product of two transitive monotone factorizations on these orbits. Conversely, given a transitive monotone factorization for each orbit, this process can be reversed, and the monotonicity condition guarantees uniqueness of the result.
      
      As with redundant joins, there are $k - 1$ choices for $a_{r+1}$ to split the $k$-cycle of $\sigma$ containing $b_{r+1}$. Each of the other cycles of $\sigma$ must be in one of the two orbits, so there are $2^{\ell(\alpha)}$ choices for the orbit containing $a_{r+1}$. Thus, the number of transitive monotone factorizations of $\sigma$ where the last factor is an essential join is
      \begin{equation}
        \sum_{k' = 1}^{k - 1} \sum_{r' = 0}^r \sum_{\alpha' \subseteq \alpha} M^{r'}(\alpha' \cup \{k'\}) M^{r - r'}(\alpha \setminus \alpha' \cup \{k - k'\}),
      \end{equation}
      which is the third term in the recurrence.
      \qedhere
  \end{description}
\end{proof}

\subsection{Operators}

To write the recurrence from \autoref{thm:recur} as a differential equation for some generating series, we introduce some operators. They will be used in 
later sections to manipulate this equation and solve it.

\begin{definition}
  The three \emph{lifting} operators are the $\QQ[[x, y, z, p_1, p_2, \ldots]]$-linear differential operators
  \begin{align*}
    \Delta_x &= \sum_{k \geq 1} k x^k \diff{p_k}, &
    \Delta_y &= \sum_{k \geq 1} k y^k \diff{p_k}, &
    \Delta_z &= \sum_{k \geq 1} k z^k \diff{p_k}.
  \end{align*}
\end{definition}

The combinatorial effect of $\Delta_x$, when applied to a generating series, is to pick a cycle marked by $p_k$ in all possible ways and mark it by $k x^k$ instead, that is, by $x^k$ once for each element of the cycle. 
Note that $\Delta_x x = 0$, so that
\begin{equation}\label{doubledelta}
  \Delta_x^2 = \sum_{i,j \geq 1} ij x^{i+j} \sdiff{p_i}{p_j}
\end{equation}
These operators are called lifting operators because of the corresponding projection operators:

\begin{definition}
  The three \emph{projection} operators are the operators
  \begin{align*}
    \Pi_x &= [x^0] + \sum_{k \geq 1} p_k [x^k], &
    \Pi_y &= [y^0] + \sum_{k \geq 1} p_k [y^k], &
    \Pi_z &= [z^0] + \sum_{k \geq 1} p_k [z^k],
  \end{align*}
  where $\Pi_x$ is $\QQ[[y, z, p_1, p_2, \ldots]]$-linear, $\Pi_y$ is $\QQ[[x, z, p_1, p_2, \ldots]]$-linear, and $\Pi_z$ is $\QQ[[x, y, p_1, p_2, \ldots]]$-linear. These operators commute and are idempotent, and $\Pi_{xy} = \Pi_x \Pi_y$, $\Pi_{xyz} = \Pi_x \Pi_y \Pi_z$, etc.\ denote their compositions.
\end{definition}

The combined effect of a lift and a projection when applied to a generating series in $\QQ[[p_1, p_2, \ldots]]$ is given by
\begin{equation}\label{pidelta}
  \Pi_x \Delta_x = \Pi_y \Delta_y = \Pi_z \Delta_z = \sum_{k \geq 1} k p_k \diff{p_k}.
\end{equation}
The identity
\begin{equation}\label{deltapi}
  \Delta_x \Pi_y f = \left[ y\diff{y} f \right]_{y=x} + \Pi_y \Delta_x f,
\end{equation}
which holds for $f \in \QQ[[x, y, z, p_1, p_2, \ldots]]$, will be useful later on, and can be checked by verifying it on elements of the form $x^i y^j z^k p_\alpha$. We will also need some splitting operators.

\begin{definition}
  The \emph{splitting} operator is the $\QQ[[p_1, p_2, \ldots]]$-linear operator defined by
  \begin{equation*}
    \Split_{x \to y}(x^k) = x^{k-1} y + x^{k-2} y^2 + \cdots + x y^{k-1}
  \end{equation*}
  for $k \geq 2$, and by $\Split_{x \to y}(1) = \Split_{x \to y}(x) = 0$. If $f(x)$ is a power series in $x$ over $\QQ[[p_1, p_2, \ldots]]$ with no constant term, then
  \begin{equation}\Split_{x \to y} \big( f(x) \big) = \frac{y f(x) - x f(y)}{x - y}.\end{equation}
\end{definition}

The combined effect of a lift, a split and a projection on a generating series in $\QQ[[p_1, p_2, \ldots]]$ is
\begin{equation}\label{pisplit}
  \Pi_{xy} \Split_{x \to y} \Delta_x = \sum_{i,j \geq 1} (i + j) p_i p_j \diff{p_{i + j}}.
\end{equation}

\subsection{Differential equations}

With the lifting, projection and splitting operators, we can write the recurrence of \autoref{thm:recur} in terms of generating series.

\begin{theorem}\label{thm:diffeq1}
  For $f = f(z, t, x, p) \in \QQ[[z, t, x, p_1, p_2, \ldots]]$, the differential equation
  \begin{equation}\label{diffeq1}
    \frac{f - zx}{t} = \Pi_y \Split_{x \to y} f + \Delta_x f + f^2
  \end{equation}
  has the unique solution $f = \Delta_x \Mon(z, t, p)$.
\end{theorem}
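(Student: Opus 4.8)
The plan is to verify that $f_0 := \Delta_x \Mon$ satisfies \eqref{diffeq1}, and that \eqref{diffeq1} has no other formal power series solution. Uniqueness is the easy half: the equation rearranges to
\[
  f = zx + t\,\bigl(\Pi_y\Split_{x\to y}f + \Delta_x f + f^2\bigr),
\]
and already presupposes that $\tfrac{1}{t}(f-zx)$ lies in $\QQ[[z,t,x,p_1,p_2,\dots]]$, i.e.\ that $[t^0]f = zx$. Since $\Pi_y\Split_{x\to y}$ and $\Delta_x$ act coefficientwise in $t$, hence preserve the $t$-grading, the rearranged equation gives
\[
  [t^{r+1}]f = \Pi_y\Split_{x\to y}\bigl([t^r]f\bigr) + \Delta_x\bigl([t^r]f\bigr) + \sum_{i=0}^{r}[t^i]f\,[t^{r-i}]f,
\]
whose right-hand side depends only on $[t^0]f,\dots,[t^r]f$; induction on the $t$-degree then shows the solution is unique.

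For existence I would first write $\Delta_x\Mon$ explicitly in terms of the numbers $M^r(\alpha)$. Put $z_\alpha := |\alpha|!/|C_\alpha| = \prod_{i\geq1}i^{m_i(\alpha)}m_i(\alpha)!$, so that $\mon^r(\alpha)/|\alpha|! = M^r(\alpha)/z_\alpha$ and $z_{\beta\cup\{k\}} = k\,\bigl(m_k(\beta)+1\bigr)\,z_\beta$. Applying $\Delta_x = \sum_{k\geq1}kx^k\,\diff{p_k}$ to the defining series for $\Mon$ and using $\diff{p_k}\,p_\alpha = m_k(\alpha)\,p_{\alpha\setminus\{k\}}$, the multiplicity factors cancel and one obtains
\[
  f_0 = \Delta_x\Mon = \sum_{k\geq1} x^k \sum_{r\geq0} t^r \sum_{\beta} \frac{M^r(\beta\cup\{k\})}{z_\beta}\, p_\beta\, z^{|\beta|+k},
\]
the inner sum running over all partitions $\beta$ (including the empty one). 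In particular $[x^k p_\alpha z^{|\alpha|+k}t^r]f_0 = M^r(\alpha\cup\{k\})/z_\alpha$, every monomial of $f_0$ satisfies the \emph{degree condition} that the power of $z$ equals the $x$-degree plus the $p$-weight, and, by the initial condition of Theorem~\ref{thm:recur}, $[t^0]f_0 = zx$. Hence $\tfrac{1}{t}(f_0-zx)$ is a genuine power series whose coefficient of $x^k p_\alpha z^{|\alpha|+k}t^r$ is $M^{r+1}(\alpha\cup\{k\})/z_\alpha$.

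The heart of the matter is then to match the three operators on the right of \eqref{diffeq1} to the three terms of the recurrence \eqref{recurrence}. The degree condition is visibly preserved by $\Delta_x$, by $\Pi_y\Split_{x\to y}$, and by squaring, so both sides of \eqref{diffeq1} satisfy it, and it suffices to compare coefficients of $x^k p_\alpha z^{|\alpha|+k}t^r$. A direct extraction --- using $\Pi_y\Split_{x\to y}(x^m)=\sum_{a=1}^{m-1}x^a p_{m-a}$, the identity $[x^k]\Delta_x f_0 = \sum_{k'=1}^{k-1}k'\,[x^{k-k'}]\diff{p_{k'}}f_0$, and $[x^k]f_0^2 = \sum_{m_1+m_2=k}[x^{m_1}]f_0\,[x^{m_2}]f_0$, together with the relations between $z_\alpha$ and $z_{\alpha\setminus\{k'\}}$ --- shows that $[x^k p_\alpha z^{|\alpha|+k}t^r]\,\Pi_y\Split_{x\to y}f_0$ equals $z_\alpha^{-1}$ times the cut term $\sum_{k'\geq1}k'm_{k'}(\alpha)M^r(\alpha\setminus\{k'\}\cup\{k+k'\})$, that $[x^k p_\alpha z^{|\alpha|+k}t^r]\,\Delta_x f_0$ equals $z_\alpha^{-1}$ times the redundant-join term $\sum_{k'=1}^{k-1}M^r(\alpha\cup\{k',k-k'\})$, and that $[x^k p_\alpha z^{|\alpha|+k}t^r]\,f_0^2$ equals $z_\alpha^{-1}$ times the essential-join term $\sum_{k'=1}^{k-1}\sum_{r'=0}^{r}\sum_{\alpha'\subseteq\alpha}M^{r'}(\alpha'\cup\{k'\})M^{r-r'}(\alpha\setminus\alpha'\cup\{k-k'\})$. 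Summing these and invoking \eqref{recurrence} yields $M^{r+1}(\alpha\cup\{k\})/z_\alpha$, which is precisely the coefficient of $\tfrac{1}{t}(f_0-zx)$ found above; therefore $f_0$ solves \eqref{diffeq1}, and together with uniqueness this proves the theorem.

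I expect the only real obstacle to be the bookkeeping of the symmetry and multiplicity factors in these three coefficient extractions. The cut and redundant-join terms require only the relation $z_{\beta\cup\{k\}} = k\,(m_k(\beta)+1)\,z_\beta$ recorded above (equivalently $1/z_{\alpha\setminus\{k'\}} = k'm_{k'}(\alpha)/z_\alpha$). The quadratic term is the delicate one: extracting $[p_\alpha]$ from $f_0^2$ naturally produces a sum over \emph{ordered} factorizations $p_\alpha = p_{\beta_1}p_{\beta_2}$ weighted by $\prod_i\binom{m_i(\alpha)}{m_i(\beta_1)}$, and one must verify that this weight equals $z_\alpha/(z_{\beta_1}z_{\beta_2})$, so that dividing by $z_\alpha$ leaves exactly the \emph{unweighted} sum over the $2^{\ell(\alpha)}$ subpartitions $\alpha'\subseteq\alpha$ that appears in Theorem~\ref{thm:recur}.
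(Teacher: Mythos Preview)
Your proposal is correct and follows essentially the same route as the paper: uniqueness by induction on the $t$-degree from the rearranged equation, and existence by unpacking $\Delta_x\Mon$ as a sum over $(k,\alpha,r)$ with weight $M^r(\alpha\cup\{k\})/z_\alpha$ and matching the three operators $\Pi_y\Split_{x\to y}$, $\Delta_x$, and squaring to the cut, redundant-join, and essential-join terms of the recurrence in Theorem~\ref{thm:recur}. The paper carries out the same three coefficient computations explicitly (including the reindexing and the binomial identity $z_\alpha/(z_{\alpha'}z_{\alpha\setminus\alpha'})=\prod_i\binom{m_i(\alpha)}{m_i(\alpha')}$ you flag for the quadratic term), so your outline and theirs coincide.
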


\begin{proof}
  The coefficient $[t^r] f$ of a solution $f \in \QQ[[z, t, x, p_1, p_2, \ldots]]$ can be computed recursively from coefficients $[t^{r'}] f$ for $r' < r$, with the base case $[t^0] f = zx$. Thus, by induction on $r$, such a solution exists and is unique.
  
  To show that $\Delta_x \Mon$ is a solution, we use the fact that \eqref{diffeq1} is equivalent to the initial condition and recurrence from \autoref{thm:recur}. Indeed, multiplying \eqref{recurrence} by
  \begin{equation}
    \frac{z^d}{d!} t^r p_\alpha k (m_k(\alpha) + 1) x^k \abs{\C_{\alpha \cup \{k\}}} = \frac{z^d t^r p_\alpha x^k}{\prod_{i \geq 1} i^{m_i(\alpha)} m_i(\alpha)!}
  \end{equation}
  and summing over all choices of $d, r, \alpha, k$ with $d \geq k \geq 1$, $\alpha \vdash d - k$ and $r \geq 0$ gives
  \begin{equation}\label{diffeqsubst}
    \frac{\Delta_x \Mon - zx}{t} = \Pi_y \Split_{x \to y} \Delta_x \Mon + \Delta_x^2 \Mon + \big(\Delta_x \Mon)^2,
  \end{equation}
  as shown by the following computations. We have
  \begin{align}
    \Delta_x \Mon
      &= \sum_{\substack{k \geq 1 \\ d \geq 1 \\ r \geq 0 \\ \alpha \vdash d}} \frac{z^d}{d!} t^r \diff[p_\alpha]{p_k} k x^k \abs{\C_\alpha} M^r(\alpha) \\
      &= \sum_{\substack{k \geq 1 \\ d \geq 1 \\ r \geq 0 \\ \alpha \vdash d - k}} \frac{z^d}{d!} t^r p_\alpha k (m_k(\alpha) + 1) x^k \abs{\C_{\alpha \cup \{k\}}} M^r(\alpha \cup \{k\}) \\
      &= \sum_{\substack{k \geq 1 \\ d \geq 1 \\ r \geq 0 \\ \alpha \vdash d - k}} \frac{z^d t^r p_\alpha x^k M^r(\alpha \cup \{k\}}{\prod_{i \geq 1} i^{m_i(\alpha)} m_i(\alpha)!}),
  \end{align}
  so that
  \begin{equation}
    \frac{\Delta_x \Mon - zx}{t}
      = \sum_{\substack{k \geq 1 \\ d \geq k \\ r \geq 0 \\ \alpha \vdash d - k}} \frac{z^d t^r p_\alpha x^k M^{r+1}(\alpha \cup \{k\})}{\prod_{i \geq 1} i^{m_i(\alpha)} m_i(\alpha)!},
  \end{equation}
  which gives the left-hand side of \eqref{recurrence} and \eqref{diffeqsubst}. Next, we have
  \begin{align}
    \Pi_y \Split_{x \to y} \Delta_x \Mon
      &= \sum_{\substack{k' \geq 1 \\ k \geq k' + 1 \\ d \geq k \\ r \geq 0 \\ \alpha \vdash d - k}} \frac{z^d t^r p_{\alpha \cup \{k'\}} x^{k - k'} M^r(\alpha \cup \{k\})}{\prod_{i \geq 1} i^{m_i(\alpha)} m_i(\alpha)!} \\
      &= \sum_{\substack{k \geq 1 \\ d \geq k \\ r \geq 0 \\ \alpha \vdash d - k \\ k' \geq 1,\, k' \in \alpha}} \frac{z^d t^r p_\alpha k' m_{k'}(\alpha) x^k M^r(\alpha \setminus \{k'\} \cup \{k + k'\})}{\prod_{i \geq 1} i^{m_i(\alpha)} m_i(\alpha)!}, \\
  \end{align}
  where the second line is obtained from the first by reindexing, replacing $k$ by $k + k'$ and $\alpha$ by $\alpha \setminus \{k'\}$. This is the first term of the right-hand side of \eqref{recurrence} and \eqref{diffeqsubst}. Also, we have
  \begin{align}
    \Delta_x^2 \Mon
      &= \sum_{\substack{k' \geq 1 \\ k \geq 1 \\ d \geq k \\ r \geq 0 \\ \alpha \vdash d - k}} k' x^{k'} \diff{p_{k'}} \frac{z^d t^r p_\alpha x^k M^r(\alpha \cup \{k\})}{\prod_{i \geq 1} i^{m_i(\alpha)} m_i(\alpha)!} \\
      &= \sum_{\substack{k' \geq 1 \\ k \geq k + 1 \\ d \geq k \\ r \geq 0 \\ \alpha \vdash d - k}} \frac{z^d t^r p_\alpha x^k M^r(\alpha \cup \{k', k - k'\})}{\prod_{i \geq 1} i^{m_i(\alpha)} m_i(\alpha)!},
  \end{align}
  where the second line is obtained from the first by removing terms which vanish (that is, with $k' \notin \alpha$) and reindexing, replacing $k$ by $k - k'$ and $\alpha$ by $\alpha \cup \{k'\}$. This is  the second term of the right-hand side of \eqref{recurrence} and \eqref{diffeqsubst}. Finally, we have
  \begin{align}
    \big(\Delta_x \Mon)^2
      &= \sum_{\substack{k' \geq 1 \\ d' \geq k' \\ r' \geq 0 \\ \alpha' \vdash d' - k'}} \; \sum_{\substack{k \geq 1 \\ d \geq k \\ r \geq 0 \\ \alpha \vdash d - k}} \frac{z^{d + d'} t^{r + r'} p_{\alpha \cup \alpha'} x^{k + k'}  M^{r'}(\alpha' \cup \{k'\}) M^r(\alpha \cup \{k\})}{\prod_{i \geq 1} i^{m_i(\alpha) + m_i(\alpha')} m_i(\alpha)! m_i(\alpha')!} \\
      &= \sum_{\substack{k' \geq 1 \\ d' \geq k' \\ r' \geq 0 \\ \alpha' \vdash d' - k'}} \; \sum_{\substack{k \geq k' + 1 \\ d \geq k + d' - k' \\ r \geq r' \\ \alpha \vdash d - k,\, \alpha' \subseteq \alpha}} \frac{z^d t^r p_\alpha x^k M^{r'}(\alpha' \cup \{k'\}) M^{r - r'}(\alpha \setminus \alpha' \cup \{k - k'\})}{\prod_{i \geq 1} i^{m_i(\alpha)} m_i(\alpha)! \binom{m_i(\alpha)}{m_i(\alpha')}^{-1}} \\
      &= \sum_{\substack{k \geq 1 \\ d \geq k \\ r \geq 0 \\ \alpha \vdash d - k}} \; \sum_{\substack{k' \geq 1,\, k' \leq k - 1 \\ \alpha' \subseteq \alpha \\ r' \geq 0,\, r' \leq r}} \frac{z^d t^r p_\alpha x^k M^{r'}(\alpha' \cup \{k'\}) M^{r - r'}(\alpha \setminus \alpha' \cup \{k - k'\})}{\prod_{i \geq 1} i^{m_i(\alpha)} m_i(\alpha)!},
  \end{align}
  where the second line is obtained from the first by reindexing, replacing $d$ by $d - d'$, $k$ by $k - k'$ and $\alpha$ by $\alpha \setminus \alpha'$; and the third line is obtained by replacing the summation over $\alpha' \vdash d' - k'$ by a summation over the $2^{\ell(\alpha)}$ subpartitions of $\alpha$, weighted by $\prod_{i \geq 1} \binom{m_i(\alpha)}{m_i(\alpha')}^{-1}$ to account for the resulting overcount. This is the third term of the right-hand side of \eqref{recurrence} and \eqref{diffeqsubst}.
\end{proof}

As a corollary, we obtain a proof of the monotone join-cut equation.
\medskip

\noindent
\textit{Proof of Theorem \ref{thm:JoinCut}}.
  This equation is the result of applying the projection operator $\tfrac{1}{2} \Pi_x$ to \eqref{diffeqsubst}, and simplifying with \eqref{pidelta}, \eqref{pisplit}, \eqref{doubledelta}, and noting that
  \begin{equation}
    \sum_{k \geq 1} k p_k \diff[\Mon]{p_k} \Mon = z \diff[\Mon]{z},
  \end{equation}
  so it is satisfied by $\Mon$. Except for $[z^0] \Mon$, uniqueness of the coefficients of $\Mon$ follows by induction on the exponent of the accompanying 
  power of $t$. $\hfill\Box$

\medskip

In what follows, it will be convenient to specialize $z=1$ in our generating series for monotone Hurwitz numbers.

\begin{definition}\label{def:Gseries}
  For $g \geq 0$, the genus $g$ generating series for monotone single Hurwitz numbers is
  \begin{equation}
    \Gen_g = \Mon_g(1,p_1, p_2, \ldots) = \sum_{\substack{d \geq 1 \\ \alpha \vdash d}} \frac{p_\alpha}{d!} \mon_g(\alpha),
  \end{equation}
  and the genus-wise generating series for all monotone single Hurwitz numbers is
  \begin{equation}
    \Gen = \Gen(t, p_1, p_2, \ldots) = \sum_{g \geq 0} t^g \Gen_g.
  \end{equation}
\end{definition}

Note that this generating series is equivalent to $\Mon$, via the relations
\begin{align}\label{substitution}
  \Gen(t, p_1, p_2, \ldots, p_k, \ldots) &= t \Mon(1, t^{1/2}, t^{-1} p_1, t^{-3/2} p_2, \ldots, t^{-(k+1)/2} p_k, \ldots) \\
  \Mon(z, t, p_1, p_2, \ldots, p_k, \ldots) &= t^{-2} \Gen(t^2, z t^2 p_1, z^2 t^3 p_2, \ldots, z^k t^{k+1} p_k, \ldots)
\end{align}
In addition to replacing the marker for number of transpositions by a marker for genus, the marker $z$ for the size of the ground set has been removed from $\Gen$; this is mainly to simplify the task of keeping track of whether the operators $\diff{p_k}$ are considered as $z$-linear or $s$-linear operators, where $z$ and $s$ are related by a functional relation, as in \autoref{thm:goseries}. See \autoref{sec:change} for the details.

\begin{theorem}\label{thm:jcgenus}
  For $f = f(t, x, p_1, p_2, \ldots) \in \QQ[t, x, p_1, p_2, \ldots]$, the differential equation
  \begin{equation}\label{ljcg}
    f = \Pi_y \Split_{x \to y} f + t \Delta_x f + f^2 + x
  \end{equation}
  has a unique solution with no constant term, $f = \Delta_x \Gen(t, p_1, p_2, \ldots)$.
  Furthermore, the series $\Delta_x \Gen_0 \in \QQ[[x, p_1, p_2, \ldots]]$ is uniquely determined by the equation
  \begin{equation}\label{ljcgz}
    \Delta_x \Gen_0 = \Pi_y \Split_{x \to y} \Delta_x \Gen_0 + (\Delta_x \Gen_0)^2 + x
  \end{equation}
  and the requirement that it have no constant term. For $g \geq 1$, the series $\Gen_g \in \QQ[[x, p_1, p_2, \ldots]]$ is uniquely determined by the equation
  \begin{equation}\label{ljchg}
    \left( 1 - 2 \Delta_x \Gen_0 - \Split_{x \to y} \right) \Delta_x \Gen_g = \Delta_x^2 \Gen_{g-1} + \sum_{g'=1}^{g-1} \Delta_x \Gen_{g'} \, \Delta_x \Gen_{g-g'}.
  \end{equation}
\end{theorem}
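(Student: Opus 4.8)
The plan is to obtain \eqref{ljcg}, \eqref{ljcgz}, and \eqref{ljchg} from the already-established equation \eqref{diffeq1} of \autoref{thm:diffeq1} by the change of variables recorded in \eqref{substitution}, and then to extract the genus-by-genus content. First I would substitute $z=1$, $t \mapsto t^{1/2}$, and $p_k \mapsto t^{-(k+1)/2} p_k$ into \eqref{diffeq1}. Under this substitution $\Mon(z,t,p) \mapsto t^{-1}\Gen(t,p)$, and one checks how the operators transform: $\Delta_x$ (which carries a factor $k x^k \partial_{p_k}$) and $\Pi_y\Split_{x\to y}$ (which produces $p_ip_j$ pairs and lowers one $p$-index by addition) pick up the appropriate powers of $t$. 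The cleanest bookkeeping is to also rescale $x \mapsto t^{1/2} x$, so that $\Delta_x \mapsto$ (something proportional to) $\Delta_x$ with the right weight; tracking the exponents shows the quadratic term $f^2$ is weighted by one more power of $t$ than the linear terms, while the $\Delta_x f$ term acquires the extra factor $t$ appearing in \eqref{ljcg}, and the inhomogeneous term $zx$ becomes $x$. This yields \eqref{ljcg} with $f = \Delta_x \Gen$. Uniqueness of the no-constant-term solution follows exactly as in \autoref{thm:diffeq1}: \eqref{ljcg} is triangular in the total degree in $(t,p)$, so $[t^g p_\alpha] f$ is determined by lower-degree data, with base case forced by the requirement that $f$ have no constant term (the $+x$ on the right supplies the seed).

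Next I would extract \eqref{ljcgz} and \eqref{ljchg} by expanding $f = \sum_{g\geq 0} t^g \Delta_x \Gen_g$ in \eqref{ljcg} and collecting powers of $t$. The operators $\Pi_y\Split_{x\to y}$ and the multiplication are $t$-free, while the lone $t\Delta_x f$ term shifts the genus index by one. At order $t^0$ we get $\Delta_x \Gen_0 = \Pi_y\Split_{x\to y}\Delta_x\Gen_0 + (\Delta_x\Gen_0)^2 + x$, which is \eqref{ljcgz}; the no-constant-term condition on $\Gen_0$ (equivalently on $\Delta_x\Gen_0$) pins it down, again by a degree-in-$p$ triangularity argument. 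At order $t^g$ for $g \geq 1$ we get
\begin{equation*}
  \Delta_x \Gen_g = \Pi_y\Split_{x\to y}\Delta_x\Gen_g + \Delta_x^2 \Gen_{g-1} + \sum_{g'=0}^{g}\Delta_x\Gen_{g'}\,\Delta_x\Gen_{g-g'},
\end{equation*}
and isolating the two extreme terms $g'=0$ and $g'=g$ of the convolution, which together contribute $2(\Delta_x\Gen_0)(\Delta_x\Gen_g)$, and moving them to the left gives exactly \eqref{ljchg}. That the resulting operator $1 - 2\Delta_x\Gen_0 - \Split_{x\to y}$ is invertible on the relevant space — i.e.\ that \eqref{ljchg} genuinely determines $\Delta_x\Gen_g$, and hence $\Gen_g$ — needs the observation that $\Delta_x\Gen_0$ has no constant term, so $1 - 2\Delta_x\Gen_0 - \Split_{x\to y}$ is a triangular (unipotent-plus-strictly-lowering) operator with respect to an appropriate filtration, hence invertible.

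The main obstacle is the careful verification of the operator identities under the change of variables: $\Delta_x$, $\Pi_y$, and $\Split_{x\to y}$ each interact with the rescalings $p_k \mapsto t^{-(k+1)/2}p_k$ and $x \mapsto t^{1/2}x$ in a way that must produce precisely the stated powers of $t$ (in particular, the asymmetry that makes $\Delta_x f$ carry a factor $t$ but $f^2$ carry no compensating factor relative to $f$ itself). I would do this by testing on a monomial $x^k p_\alpha$ and confirming the weight of each of the three terms on the right of \eqref{diffeq1}, since all the operators are linear and the weights are additive; everything else — the order-by-order extraction in $t$ and the triangularity arguments for uniqueness and for invertibility of $1 - 2\Delta_x\Gen_0 - \Split_{x\to y}$ — is then routine.
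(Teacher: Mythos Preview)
Your proposal is correct and follows essentially the same route as the paper: the paper explicitly notes that \eqref{ljcg} can be obtained ``from \eqref{diffeqsubst} by using the substitutions \eqref{substitution},'' which is exactly your change-of-variables approach, and the extraction of \eqref{ljcgz} and \eqref{ljchg} as the $t^0$ and $t^g$ coefficients (with the $g'=0,g$ terms of the convolution moved to the left) is identical. The only minor difference is that the paper handles uniqueness by appealing back to the equivalence with the combinatorial recurrence of \autoref{thm:recur}, whereas you argue triangularity directly; both are fine, though your filtration argument would need to be stated carefully (the natural grading is by $d$, with $\deg x = 1$ and $\deg p_k = k$, under which the right-hand side at degree $d$ involves only $f$ at degrees $<d$ via the $f^2$ term, while the linear operators preserve degree and the degree-$d$ piece of $f$ is then determined by a unipotent linear equation).
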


\begin{proof}
  Equation~\eqref{ljcg} can be obtained directly from the recurrence in \autoref{thm:recur} as in the proof of \autoref{thm:diffeq1} by using the weight
  \begin{equation}
    \frac{t^{(r - \ell(\alpha) - d + 2) / 2} p_\alpha x^k}{\prod_{i \geq 1} i^{m_i(\alpha)} m_i(\alpha)!},
  \end{equation}
  or from \eqref{diffeqsubst} by using the substitutions \eqref{substitution}. Extracting the coefficient of $t^0$ in \eqref{ljcg} gives \eqref{ljcgz}. The uniqueness comes from the fact that \eqref{ljcg} is equivalent to the recurrence in \autoref{thm:recur}. Extracting the coefficient of $t^g$ for $g \geq 1$ gives \eqref{ljchg} after moving some terms to the left-hand side to solve for $\Delta_x \Gen_g$.
\end{proof}

Note that \eqref{ljchg} expresses the image of $\Delta_x \Gen_g$ under a $\QQ[[p_1, p_2, \ldots]]$-linear operator in terms of generating series for lower genera. Our strategy to obtain $\Delta_x \Gen_g$ (and hence $\Gen_g$) is to use \eqref{ljcgz} to verify a conjectured expression for the coefficients of $\Delta_x \Gen_0$, and then to invert the linear operator in \eqref{ljchg}.

\subsection{Topological recursion}\label{sec:toprec}

For the purposes of this section, let $\alpha = (\alpha_1, \alpha_2, \ldots, \alpha_\ell)$ be a composition of $d$ instead of a partition; that is, we still have $\alpha_1 + \alpha_2 + \cdots + \alpha_\ell = d$, but we no longer require $\alpha_1 \geq \alpha_2 \geq \cdots \geq \alpha_\ell$.
Also for this section only, for given $g \geq 0$ and $\ell \geq 1$, consider the generating series
\begin{equation}
  \Mon_g(x_1, x_2, \ldots, x_\ell) = \sum_{\alpha_1, \alpha_2, \ldots, \alpha_\ell \geq 1} \frac{\mon_g(\alpha)}{\abs{\C_\alpha}} x_1^{\alpha_1 - 1} x_2^{\alpha_2 - 1} \cdots x_\ell^{\alpha_\ell - 1}.
\end{equation}
This series for monotone Hurwitz numbers, which collects only the terms for $\alpha$ with a fixed number of parts, is analogous to the series \linebreak $H_g(x_1, x_2, \ldots, x_\ell)$ for Hurwitz numbers considered by Bouchard and Mari\~no \cite[Equations~(2.11) and~(2.12)]{BM}.

One form of recurrence for Hurwitz numbers is expressed in terms of the series $H_g(x_1, x_2, \ldots, x_\ell)$. This is sometimes referred to as ``topological recursion'' (see, \textit{e.g.}, \cite[Conjecture~2.1]{BM}; \cite[Remark~4.9]{EMS}; \cite[Definition~4.2]{EO}).

\begin{theorem}\label{thm:toprec}
  For $g \geq 0$ and $\ell \geq 1$, we have
  \begin{multline}
    \Mon_g(x_1, x_2, \ldots, x_\ell)
      = \delta_{g,0} \delta_{\ell,1}
      + x_1 \Mon_{g-1}(x_1, x_1, x_2, \ldots, x_\ell) \vphantom{\sum_{j=2}^\ell} \\
      + \sum_{j=2}^\ell \diff{x_j} \left( \frac{x_1 \Mon_g(x_1, \ldots, \widehat{x_j}, \ldots x_\ell) - x_j \Mon_g(x_2, \ldots, x_\ell)}{x_1 - x_j} \right) \\
      + \sum_{g'=0}^g \sum_{S \subseteq \{2, \ldots, k\}} x_1 \Mon_{g'}(x_1, x_S) \Mon_{g-g'}(x_1, x_{\overline{S}}),
  \end{multline}
  where $x_1, \ldots, \widehat{x_j}, \ldots x_\ell$ is the list of all variables $x_1, \ldots, x_\ell$ except $x_j$, $x_S$ is the list of all variables $x_j$ with $j \in S$, and $x_{\overline{S}}$ is the list of all variables $x_j$ with $j \in \{2, \ldots, k\} \setminus S$.
\end{theorem}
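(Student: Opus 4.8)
The plan is to deduce the recursion from the genus-graded join-cut equation~\eqref{ljcg} of \autoref{thm:jcgenus}, so that the cut/join case analysis need not be repeated. The key preliminary is a dictionary between the series $\Mon_g(x_1,\dots,x_\ell)$ and iterated lifting operators: writing $\Delta_{x_i}=\sum_{k\ge1}kx_i^k\diff{p_k}$ for auxiliary variables $x_1,\dots,x_\ell$, I would verify directly from the definitions that
\[
  \Delta_{x_1}\cdots\Delta_{x_\ell}\,\Gen_g\big|_{p_1=p_2=\cdots=0}=x_1\cdots x_\ell\,\Mon_g(x_1,\dots,x_\ell),
\]
and hence $\Delta_{x_1}\cdots\Delta_{x_\ell}\,\Gen\big|_{p=0}=x_1\cdots x_\ell\sum_{g\ge0}t^g\Mon_g(x_1,\dots,x_\ell)$. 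This is a short coefficient computation: applying the $\ell$ operators to $p_\alpha$ and setting $p=0$ forces $\ell(\alpha)=\ell$ and yields $\bigl(\prod_j m_j(\alpha)!\bigr)\sum_\beta\bigl(\prod_i\beta_i\bigr)\prod_i x_i^{\beta_i}$, the sum over compositions $\beta$ with underlying partition $\alpha$, and $|\C_\alpha|=d!/\bigl(\prod_i\alpha_i\prod_j m_j(\alpha)!\bigr)$ converts $\mon_g(\alpha)/d!$ times this weight into exactly $\mon_g(\beta)/|\C_\beta|$; the same identity with repeated variables follows by specialization. I would also note here that $\Mon_g(x_1,\dots,x_\ell)$ is symmetric in its arguments, since $\mon_g(\cdot)$ and $|\C_\cdot|$ depend only on the underlying partition.

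The proof then consists of applying $\Delta_{x_2}\cdots\Delta_{x_\ell}$ to~\eqref{ljcg} (with the variable $x$ there renamed $x_1$, so $f=\Delta_{x_1}\Gen$), specializing $p=0$, dividing by $x_1\cdots x_\ell$, and extracting the coefficient of $t^g$. Since the $\Delta_{x_i}$ are commuting derivations that also commute with $\Delta_{x_1}$ and with $\Split_{x_1\to y}$, three of the four terms are immediate. The summand $x$ survives $\Delta_{x_2}\cdots\Delta_{x_\ell}$ only when $\ell=1$, contributing $\delta_{g,0}\delta_{\ell,1}$. The summand $t\Delta_{x_1}f=t\Delta_{x_1}^2\Gen$ becomes, by the dictionary applied with an extra $\Delta_{x_1}$, the term $x_1\Mon_{g-1}(x_1,x_1,x_2,\dots,x_\ell)$. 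The summand $f^2$, by the Leibniz rule for a product of derivations followed by $p=0$ and the dictionary, becomes $\sum_{g'=0}^g\sum_{S\subseteq\{2,\dots,\ell\}}x_1\Mon_{g'}(x_1,x_S)\Mon_{g-g'}(x_1,x_{\overline S})$ once the factors $\prod_{i\in S}x_i$ and $\prod_{i\notin S}x_i$ recombine against $1/(x_1\cdots x_\ell)$.

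The remaining term $\Pi_y\Split_{x_1\to y}f$ is the delicate one, and handling it is the main obstacle. I would push $\Delta_{x_2},\dots,\Delta_{x_\ell}$ past the single $\Pi_y$ one operator at a time using~\eqref{deltapi}, i.e.\ $\Delta_{x_j}\Pi_y=\bigl[y\,\partial_y(\cdot)\bigr]_{y=x_j}+\Pi_y\Delta_{x_j}$; because there is only one $\Pi_y$, the outcome is a sum over which $\Delta_{x_j}$ ``consumes'' it (turning $\Pi_y$ into evaluation at $y=x_j$) plus a single leftover $\Pi_y$-term, and the latter vanishes at $p=0$ since $\Split_{x_1\to y}$ always produces positive $y$-degree. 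The $j$-th surviving term is $\bigl[y\,\partial_y\,\Split_{x_1\to y}\bigl((\prod_{i\ne1,j}\Delta_{x_i})f\big|_{p=0}\bigr)\bigr]_{y=x_j}$; using the closed form $\Split_{x\to y}h(x)=(yh(x)-xh(y))/(x-y)$, the dictionary to rewrite $(\prod_{i\ne1,j}\Delta_{x_i})f\big|_{p=0}$ through $\Mon_g$, the symmetry $\Mon_g(x_j,x_2,\dots,\widehat{x_j},\dots,x_\ell)=\Mon_g(x_2,\dots,x_\ell)$, and the identity (again from symmetry) that the first-argument derivative of $\Mon_g$ evaluated at $x_j$ is $\partial_{x_j}\Mon_g(x_2,\dots,x_\ell)$, a short manipulation collapses the naive $y\,\partial_y(\cdot)\big|_{y=x_j}$ into $(x_1\cdots x_\ell)\,\partial_{x_j}\bigl(\tfrac{x_1\Mon_g(x_1,\dots,\widehat{x_j},\dots,x_\ell)-x_j\Mon_g(x_2,\dots,x_\ell)}{x_1-x_j}\bigr)$. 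The prefactor is exactly $x_1\cdots x_\ell$, so after dividing and summing over $j$ this is the divided-difference term of the theorem, and assembling the four contributions completes the proof. (Alternatively one could argue straight from the recurrence of \autoref{thm:recur}, weighting by $t^{(r-\ell(\alpha)-d+2)/2}x_1^k/\prod_i i^{m_i(\alpha)}m_i(\alpha)!$ and using monomial markers for the remaining parts, but going through~\eqref{ljcg} spares one the repetition of the cut/redundant-join/essential-join dissection.)
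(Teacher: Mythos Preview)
Your argument is correct. The paper's own proof is the single sentence ``the result follows routinely from the monotone join-cut recurrence in \autoref{thm:recur},'' i.e.\ it weights the recurrence~\eqref{recurrence} directly by $x_1^{k-1}$ and monomial markers for the remaining parts and reads off the four terms of the theorem from the cut, redundant-join, and essential-join cases. You instead pass through the genus-graded PDE~\eqref{ljcg}, hit it with $\Delta_{x_2}\cdots\Delta_{x_\ell}$, and use the commutation identity~\eqref{deltapi} together with the dictionary $\Delta_{x_1}\cdots\Delta_{x_\ell}\Gen_g\big|_{p=0}=x_1\cdots x_\ell\,\Mon_g(x_1,\dots,x_\ell)$. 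The delicate step is the $\Pi_y\Split$ term, and your treatment is sound: exactly one $\Delta_{x_j}$ can consume the single $\Pi_y$, the leftover $\Pi_y$-piece dies at $p=0$ because $\Split$ forces positive $y$-degree, and the identity $D(y)-B(y)=M(x_1)$ (in your notation, the difference between $(x_1M(x_1)-yM(y))/(x_1-y)$ and $y(M(x_1)-M(y))/(x_1-y)$ is the $y$-constant $M(x_1)$) makes the $y\,\partial_y$ evaluation match $\partial_{x_j}$ of the divided difference after invoking symmetry. What your route buys is that the join--cut case analysis is not repeated and all manipulations are packaged as operator identities already established in the paper; what the paper's route buys is brevity, since the three cases of \autoref{thm:recur} map term-for-term onto the three nontrivial summands of the theorem with essentially no computation. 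Your closing parenthetical acknowledges exactly this trade-off.
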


\begin{proof}
  The result follows routinely from the monotone join-cut recurrence in \autoref{thm:recur}.
\end{proof}

\begin{remark}
  Note that as a consequence of its combinatorial interpretation, the unique solution of the recurrence in \autoref{thm:toprec} is symmetric in the variables $x_1, x_2, \ldots, x_\ell$, even though the recurrence itself is asymmetric between $x_1$ and $x_2, \ldots, x_\ell$.
\end{remark}

For $(g, \ell) = (0, 1)$, the recurrence above reduces to the equation $\Mon_0(x_1) = 1 + x_1 \Mon_0(x_1)^2$, so we have
\begin{equation}
  \Mon_0(x_1) = \frac{1 - \sqrt{1 - 4x_1}}{2x_1} = \sum_{k \geq 0} \frac{1}{k + 1} \binom{2k}{k} x_1^k.
\end{equation}
After some calculation we obtain
\begin{equation}
  \Mon_0(x_1, x_2) = \frac{4}{\sqrt{1 - 4x_1} \sqrt{1 - 4x_2} (\sqrt{1 - 4x_1} + \sqrt{1 - 4x_2})^2}.
\end{equation}
If we define $y_i$ by $y_i = 1 + x_i y_i^2$ for $i \geq 1$, then
\begin{align}
  \Mon_0(x_1) &= y_1, \\
  \Mon_0(x_1, x_2) &= \frac{x_1 y_1' x_2 y_2' (x_2 y_2 - x_1 y_1)^2}{(y_1 - 1) (y_2 - 1) (x_2 - x_1)^2},
\end{align}
where $y_i'$ denotes $\diff[y_i]{x_i}.$
Thus, in the context of Eynard and Orantin \cite{EO}, we have the spectral curve $y$, where $y = 1 + xy^2$, but it is unclear to us what the correct notion of Bergmann kernel should be in this case.

\subsection{Monotone double Hurwitz numbers}\label{sec:double}

In this section, we give a combinatorial description of the boundary conditions of the monotone 
join-cut equation for monotone double Hurwitz numbers.  The monotone double Hurwitz numbers were dealt with extensively in \cite{GGN}; 
$\hur^r(\alpha,\beta)$ counts $(r+2)$-tuples $(\sigma,\rho,\tau_1,\dots,\tau_r)$
with $\rho \in C_\beta$, satisfying conditions $(1)-(5)$ of Theorem \ref{thm:Degenerate} suitably modified.

\begin{theorem}
  The generating series
  \begin{equation}
    \Mon = \Mon(z,t,p, q) = \sum_{\substack{d \geq 1 \\ r \geq 0 \\ \alpha, \beta \vdash d}} \frac{z^d}{d!} t^r p_\alpha q_\beta \mon^r(\alpha, \beta)
  \end{equation}
  for the monotone double Hurwitz numbers satisfies the equation
  \begin{multline}\label{residue}
  \frac{1}{2t} \bigg{(}  z \diff[\Mon]{z}-zp_1 q_1 - z^2 \diff{z} \sum_{\substack{i, j \geq 1 \\ d \geq i, j \\ r \geq 0 \\ \alpha \vdash d - i \\ \beta \vdash d - j}} \frac{z^d}{d!} t^r p_\alpha p_{i+1} q_\beta q_{j+1} N^r(\alpha, i; \beta, j)
     \bigg{)}\\ = \frac{1}{2} \sum_{i, j \geq 1} (i + j) p_i p_j \diff[\Mon]{p_{i+j}} + ij p_{i+j} \sdiff[\Mon]{p_i}{p_j} + ij p_{i+j} \diff[\Mon]{p_i} \diff[\Mon]{p_j},
  \end{multline}
  where $N^r(\alpha, i; \beta, j)$ is the number of transitive monotone solutions of
  \begin{equation}\label{newdouble}
    \sigma \rho (a_1 \, b_1) (a_2 \, b_2) \cdots (a_r \, b_r) = \id
  \end{equation}
  where $\sigma \in \C_{\alpha \cup \{i\}}$, $\rho \in \C_{\beta \cup \{j\}}$, and the element $d$ is in a cycle of length $i$ of $\sigma$ and a cycle of length $j$ of $\rho$.
\end{theorem}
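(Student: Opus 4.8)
The plan is to adapt, almost verbatim, the join-cut analysis behind \autoref{thm:recur} and \autoref{thm:diffeq1}. Fix $\sigma \in \C_\alpha$ and $\rho \in \C_\beta$ with $\alpha, \beta \vdash d$, take a transitive monotone solution of $\sigma \rho \tau_1 \cdots \tau_r = \id$, rewrite it as $\sigma = \tau_r \cdots \tau_1 \rho^{-1}$, and delete the last transposition $\tau_r$. Let $d$ be the largest element of the ground set. Monotonicity still forces $d$, if it occurs in any $\tau_i$ at all, to occur in $\tau_r$; but --- and this is the single structural difference from the one-variable case --- transitivity no longer forces $d$ to occur in a transposition, since $\sigma$ and $\rho$ together may already move it. So the configurations split into two kinds: those in which $d$ occurs in $\tau_r$, and those in which $d$ occurs in no transposition.

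For the first kind, write $\tau_r = (a_r\ d)$ and pass to $\sigma' = (a_r\ d)\sigma$; then $(\sigma', \rho, \tau_1, \dots, \tau_{r-1})$ is a shorter monotone solution of $\sigma' \rho \tau_1 \cdots \tau_{r-1} = \id$ in which $\rho$ and all remaining transpositions are untouched and $\sigma'$ has cycle type obtained from $\alpha$ by a join or a cut at the cycle through $d$. Exactly as in the proof of \autoref{thm:recur}, this falls into three mutually exclusive cases: a cut, a redundant join (the shorter solution stays transitive), and an essential join (the shorter solution breaks into two transitive pieces; since $\sigma'$ and $\rho$ preserve the orbit decomposition, they, together with the transpositions, distribute over the two pieces). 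Translating into generating series via the operator computation of \autoref{thm:diffeq1} --- with $\Delta_x$ now lifting a cycle of $\sigma$ --- and then applying $\tfrac12 \Pi_x$, these three cases contribute respectively the three summands $\tfrac12\sum_{i,j\ge1}(i+j)p_i p_j\diff[\Mon]{p_{i+j}}$, $\tfrac12\sum ij p_{i+j}\sdiff[\Mon]{p_i}{p_j}$, and $\tfrac12\sum ij p_{i+j}\diff[\Mon]{p_i}\diff[\Mon]{p_j}$ on the right-hand side of \eqref{residue}; because $\rho$ is never modified, only $p$-derivatives occur, and the product in the essential-join term automatically runs over all ways of distributing $\beta$ between the two factors, since $\Mon$ already records every $\beta$.

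For the second kind, the transpositions act entirely on the ground set with $d$ removed, while $d$ sits in a cycle of some length of $\sigma$ and a cycle of some length of $\rho$; the degenerate subcase in which $d$ is fixed by $\sigma$, $\rho$ and every $\tau_i$ forces $d=1$ by transitivity and accounts for the isolated term $-zp_1 q_1$. The remaining configurations of this kind are exactly those enumerated (after the usual $\abs{\C_\alpha}$-normalization) by the auxiliary numbers $N^r(\alpha, i; \beta, j)$ of \eqref{newdouble}, where $i$ and $j$ record the lengths of the two cycles through the untouched element. Assembling their contribution to the generating series, and carefully accounting for the fact that in each such configuration the untouched element inflates the relevant $p$- and $q$-index by one relative to the partition it is removed from, produces the term $z^2\diff{z}\sum \tfrac{z^d}{d!} t^r p_\alpha p_{i+1} q_\beta q_{j+1} N^r(\alpha, i; \beta, j)$; the operator $z^2\diff{z}$, together with the shifts $i \mapsto i+1$ and $j \mapsto j+1$, is precisely the bookkeeping needed to reconcile the $z$-degree with the inflated partition sizes. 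Combining the two kinds, dividing by $t$ to restore the deleted transposition, and carrying the universal factor $\tfrac12$ coming from the involution $(a\ b)\mapsto(b\ a)$, yields \eqref{residue}. The first kind is routine, being word for word the one-variable computation; the work, and the expected main obstacle, lies entirely in the second kind --- pinning down the correspondence with $N^r(\alpha, i; \beta, j)$ and verifying the exact weight, in particular the appearance of $z^2\diff{z}$ and of the shifted indices, while confirming that the ``$d$ in $\tau_r$'', ``$d$ in no transposition'', and $d=1$ cases are genuinely disjoint and exhaustive.
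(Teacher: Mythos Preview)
Your sketch is correct and follows the same route as the paper's own proof: both split according to whether the largest element $d$ appears in the last transposition, handle the cut/redundant-join/essential-join cases exactly as in \autoref{thm:recur} to produce the right-hand side of \eqref{residue}, and treat the ``$d$ in no transposition'' case (plus the base case $d=1$) as the source of the two correction terms on the left. The one place where the paper is more explicit than you is the bijection underlying the $N^r$ term: when $d>1$ appears in no transposition it must be a fixed point of $\sigma\rho$, so $\rho(d)=\sigma^{-1}(d)=k$ for some $k\neq d$, and the paper passes to $\sigma'=\sigma(k\,d)$, $\rho'=(k\,d)\rho$ on $\{1,\dots,d-1\}$; this is precisely what produces the shift $(i,j)\mapsto(i{+}1,j{+}1)$ and the factor $z^2\partial_z$ you anticipated.
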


\begin{proof}
  The proof is very similar in spirit to the proofs of Theorems~\ref{thm:recur} and \ref{thm:JoinCut}, so we only sketch the combinatorial join-cut analysis.
  
  For a fixed choice of $\sigma \in \C_{\alpha \cup \{i\}}$ with the element $d$ in a cycle of length $i$ and $\rho \in \C_{\beta \cup \{j\}}$ with $d$ in a cycle of length $j$, consider the monotone factorizations of $\rho^{-1} \sigma^{-1}$ of length $r$ which correspond to solutions of \eqref{newdouble} counted by $\mon^r(\alpha, \beta)$. These factorizations are counted with a weight of $\frac{z^d}{d!} t^r p_\alpha p_i q_\beta q_j$. They are not necessarily the transitive monotone factorizations of $\rho^{-1} \sigma^{-1}$, as the subgroup generated by the transpositions may be a proper subgroup of the subgroup generated by the transpositions and $\sigma$ and $\rho$. However, the join-cut analysis performed for \autoref{thm:recur} in the case where last factor of the monotone factorization involves the element $d$ (that is, the cases of cuts, redundant joins, and essential joins) applies here essentially unchanged anyway, and gives the second term of the left-hand side of \eqref{residue}. When doing this analysis, $\rho$ stays fixed while $\sigma$ is modified, which corresponds to the fact that the variables $q_i$ do not appear explicitly on the left-hand side of \eqref{residue}.
  
  If $d > 1$ and the element $d$ is not involved in any transpositions, then it must be a fixed point of $\sigma \rho$, so there is some element $k$ with $\rho(d) = \sigma^{-1}(d) = k$. In this case, the subgroup
  \begin{equation}
    \langle \sigma, \rho, (a_1 \, b_1), (a_2 \, b_2), \ldots, (a_r \, b_r) \rangle
  \end{equation}
  acts transitively on the ground set $\{1, 2, \ldots, d\}$ if and only if $k \neq d$ and the subgroup
  \begin{equation}
    \langle \sigma (k \, d), (k \, d) \rho, (a_1 \, b_1), (a_2 \, b_2), \ldots, (a_r \, b_r) \rangle
  \end{equation}
  acts transitively on the subset $\{1, 2, \ldots, d - 1\}$. Let $\sigma' = \sigma (k \, d)$ and $\rho' = (k \, d) \rho$. Then, $\sigma' \in \C_{\alpha \cup \{i - 1\}}$ with $k$ in a cycle of length $i - 1$, $\rho' \in \C_{\beta \cup \{j - 1\}}$ with $k$ in a cycle of length $j - 1$, and
  \begin{equation}
    \sigma' \rho' (a_1 \, b_1) (a_2 \, b_2) \cdots (a_r \, b_r) = \id
  \end{equation}
  is a transitive monotone solution of \eqref{newdouble} counted by $N^r(\alpha, i - 1; \beta, j - 1)$. This gives the second term on the right-hand side of \eqref{residue}.

  The last case is where $d = 1$, and then $\mon^r(\varepsilon, \varepsilon) = \delta_{r,0}$, which gives the first term on the right-hand side of \eqref{residue}.
\end{proof}

\section{Variables and operators}\label{sec:change}


To obtain the series given in Theorems~\ref{thm:goseries} from the join-cut differential equation, we perform a Lagrangian change of variables. That is, the generating series $\Mon_g$ are defined in terms of the variable $z$, while the expressions in these theorems are written in terms of the variable $s$, and these two variables are related by the functional relation
\begin{equation}s = z \left( 1 - \sum_{k \geq 1} \binom{2k}{k} p_k s^k  \right)^{-2}.\end{equation}
However, it is more convenient to work with the quantities $z^k p_k$ and $s^k p_k$ instead of working with $z$ and $s$ directly. Therefore, we work with the generating series $\Gen_g$, which correspond to $\Mon_g$ with $z^k p_k$ replaced by $p_k$, and introduce another set of variables, $q_1, q_2, \ldots$, to replace $s^k p_k$.

In this section, we describe the relation between the variables $p_1, p_2, \ldots$ and the variables $q_1, q_2, \ldots$ and introduce the basic power series used to express $\Gen_g$ succinctly. We also collect a few computational lemmas. The notation of this section is used for the rest of this paper.

\subsection{Two sets of variables}

Let $q_1, q_2, \ldots$ be a countable set of indeterminates, and let
\begin{equation}\gamma = \sum_{k \geq 1} \binom{2k}{k} q_k, \qquad \eta = \sum_{k \geq 1} (2k + 1) \binom{2k}{k} q_k\end{equation}
be formal power series in these indeterminates. If we set
\begin{equation}\label{pqrel}
  p_k = q_k (1 - \gamma)^{2k}
\end{equation}
for all $k \geq 1$, then $p_1, p_2, \ldots$ are power series in $q_1, q_2, \ldots$. Since these power series have no constant term, and the linear term in $p_k$ is simply $q_k$, they can be solved recursively to write $q_1, q_2, \ldots$ as power series in $p_1, p_2, \ldots$. Thus, we can identify the rings of power series in these two sets of variables, and write
\begin{equation}R = \QQ[[p_1, p_2, \ldots]] = \QQ[[q_1, q_2, \ldots]].\end{equation}
Using the multivariate Lagrange Implicit Function Theorem (see \cite[Theorem 1.2.9]{GJ:book}), we can relate the coefficient extraction operators $[p_\alpha]$ and $[q_\alpha]$ as follows.
\begin{theorem}\label{thm:LIFT}
  Let $\alpha \vdash d \geq 0$ be a partition and $f \in R$ be a power series. Then
  \begin{equation}[p_\alpha] f = [q_\alpha] f \frac{1 - \eta}{(1 - \gamma)^{2d+1}}.\end{equation}
\end{theorem}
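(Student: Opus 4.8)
The plan is to apply the multivariate Lagrange Implicit Function Theorem directly to the change of variables \eqref{pqrel}. The key observation is that the substitution $p_k = q_k(1-\gamma)^{2k}$ is exactly of Lagrangian type: if we write $\Phi_k(q) = (1-\gamma)^{2k}$ as a power series in $q_1, q_2, \ldots$ with constant term $1$, then $p_k = q_k \Phi_k(q)$ for each $k$, which is the standard setup for the multivariate LIFT. Note however that the $\Phi_k$ are not independent — they are all powers of the single series $(1-\gamma)^2$ — which will make the Jacobian factor collapse to a clean form. First I would recall the precise statement of \cite[Theorem 1.2.9]{GJ:book}: for such a Lagrangian system, the coefficient of $q_\alpha$ in a power series $f(q)$, re-expressed in the $p$-variables, is
\begin{equation*}
  [p_\alpha] f = [q_\alpha]\, f(q)\, \left(\prod_{k\geq 1}\Phi_k(q)^{-m_k(\alpha)}\right) \det\!\left(\delta_{ij} - \frac{q_j}{\Phi_i}\frac{\partial \Phi_i}{\partial q_j}\right),
\end{equation*}
or a minor variant thereof, where the determinant is the Jacobian of the transformation.

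Next I would evaluate the two factors on the right. The product $\prod_k \Phi_k^{-m_k(\alpha)} = \prod_k (1-\gamma)^{-2k\, m_k(\alpha)} = (1-\gamma)^{-2d}$, since $\sum_k k\, m_k(\alpha) = d$. For the Jacobian, I would compute $\frac{\partial \Phi_i}{\partial q_j} = 2i(1-\gamma)^{2i-1}\cdot(-\partial\gamma/\partial q_j) = -2i(1-\gamma)^{2i-1}\binom{2j}{j}$, so that $\frac{q_j}{\Phi_i}\frac{\partial\Phi_i}{\partial q_j} = -2i\,q_j\binom{2j}{j}/(1-\gamma)$. Thus the matrix whose determinant we need is $\delta_{ij} + \frac{2i}{1-\gamma}q_j\binom{2j}{j}$, i.e., identity plus a rank-one matrix $u v^\top$ with $u_i = 2i/(1-\gamma)$ and $v_j = q_j\binom{2j}{j}$. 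By the matrix determinant lemma, $\det(I + uv^\top) = 1 + v^\top u = 1 + \sum_j q_j\binom{2j}{j}\cdot \frac{2j}{1-\gamma} = 1 + \frac{1}{1-\gamma}\sum_j 2j\binom{2j}{j}q_j$. Writing $\eta = \sum_j (2j+1)\binom{2j}{j}q_j$ and $\gamma = \sum_j \binom{2j}{j}q_j$, we have $\sum_j 2j\binom{2j}{j}q_j = \eta - \gamma$, so the determinant equals $1 + (\eta-\gamma)/(1-\gamma) = (1-\gamma+\eta-\gamma)/(1-\gamma)$ — and here I would need to double-check: $(1-\gamma) + (\eta - \gamma) = 1 - 2\gamma + \eta$, which does not obviously match. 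So I would revisit whether the Jacobian in the relevant form of LIFT has the $q_j$ attached to row $i$ or column $j$, or whether the correct factor is $\det(\delta_{ij} - q_i \,\partial \log\Phi_i/\partial q_j)^{-1}$ versus without the inverse; getting this bookkeeping exactly right is where care is needed.

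The main obstacle I anticipate is precisely this: matching the exact form of the multivariate Lagrange inversion formula as stated in \cite{GJ:book} (there are several cosmetically different but equivalent versions, differing in whether the Jacobian appears or its inverse, and in how the $\Phi_k$-powers are distributed) and then carrying out the determinant evaluation so that the product of the $(1-\gamma)$-power and the Jacobian telescopes to exactly $(1-\eta)/(1-\gamma)^{2d+1}$. Once the correct version is pinned down, the computation should force $(1-\gamma)^{-2d}$ times the Jacobian to equal $(1-\eta)(1-\gamma)^{-(2d+1)}$, which means the Jacobian factor (in whichever orientation) must simplify to $(1-\eta)/(1-\gamma)$; I would verify this by the rank-one determinant computation above, being careful about signs and about which index the summation variable $q_j$ is tied to. As a sanity check I would test the formula on small cases — e.g. $\alpha = (1)$, where $[p_1]f$ should pick out the linear-in-$q_1$ part after the stated twist — and on $f=1$, $d=0$, where the claim reduces to $[p_\varnothing]\,1 = [q_\varnothing]\,(1-\eta)/(1-\gamma) = 1$, which holds since $\eta$ and $\gamma$ have no constant term. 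These checks would confirm the orientation of the Jacobian before committing to the final computation.
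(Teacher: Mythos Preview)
Your approach is exactly the paper's: apply the multivariate Lagrange Implicit Function Theorem from \cite{GJ:book} and evaluate the resulting rank-one Jacobian determinant via $\det(I-AB)=\det(I-BA)$. The orientation issue you flag is resolved by writing the substitution in the \emph{implicit} form $q_k = p_k\,\phi_k(q)$ with $\phi_k=(1-\gamma)^{-2k}$ (rather than the explicit $p_k=q_k\Phi_k$), so that the Jacobian factor is $\det\bigl(\delta_{ij}-q_j\,\partial_{q_j}\log\phi_i\bigr)=1-\sum_{k\geq 1}\frac{2k\binom{2k}{k}q_k}{1-\gamma}=\frac{1-\eta}{1-\gamma}$, and together with $\phi_\alpha=(1-\gamma)^{-2d}$ this gives the stated formula.
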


\begin{proof}
  Let $\phi_k = (1 - \gamma)^{-2k}$, so that $q_k = p_k \phi_k$. Then, from~\cite[Theorem 1.2.9]{GJ:book}, we get
  \begin{equation}[p_\alpha] f = [q_\alpha] f \phi_\alpha \det\left( \delta_{ij} - q_j \diff{q_j} \log \phi_i \right)_{i,j \geq 1}.\end{equation}
  We have $\phi_\alpha = (1 - \gamma)^{-2d}$, and using the fact that $\det(I - AB) = \det(I - BA)$ for matrices $A$ and $B$, we can compute the determinant as
  \begin{align}
    \det\left( \delta_{ij} - q_j \diff{q_j} \log \phi_i \right)_{i,j \geq 1}
      &= \det\left( \delta_{ij} - \frac{2i q_j}{1 - \gamma} \binom{2j}{j} \right)_{i,j \geq 1} \\
      &= 1 - \sum_{k \geq 1} \frac{2k q_k}{1 - \gamma} \binom{2k}{k} \\
      &= \frac{1 - \eta}{1 - \gamma}.\qedhere
  \end{align}
\end{proof}

Let
\begin{equation}\D_k = p_k\diff{p_k}, \qquad \DD = \sum_{k \geq 1} k \D_k\end{equation}
be differential operators on $R$. Note that they all have the set $\{p_\alpha\}_{\alpha \vdash d \geq 0}$ as an eigenbasis, with eigenvalues given by
\begin{equation}\D_k p_\alpha = m_k p_\alpha, \qquad \DD p_\alpha = \abs{\alpha} p_\alpha,\end{equation}
and consequently commute with each other. Let
\begin{equation}\E_k = q_k\diff{q_k}, \qquad \EE = \sum_{k \geq 1} k \E_k\end{equation}
be the corresponding differential operators on $R$ for the basis $\{q_\alpha\}_{\alpha \vdash d \geq 0}$. This second set of differential operators commutes with itself, but not with the first. By using the relation \eqref{pqrel} and computing the action of $\E_k$ on $p_j$, we can verify that
\begin{equation}\label{etod}
  \E_k = \D_k - \frac{2q_k}{1 - \gamma} \binom{2k}{k} \DD
\end{equation}
as operators. It follows that
\begin{equation}\label{ED}
  \EE = \frac{1 - \eta}{1 - \gamma} \DD
\end{equation}
and
\begin{equation}\label{dtoe}
  \D_k = \E_k + \frac{2q_k}{1 - \eta} \binom{2k}{k} \EE,
\end{equation}
so that we can write each set of differential operators in terms of the other. Using the fact that $2\EE \gamma = \eta - \gamma$, it will also be useful to note that for any integer $i$ and any power series $A \in R$,
\begin{equation}\label{intfact}
  (1 - \gamma)^i (2\EE - i) \Big( (1 - \gamma)^i A \Big) = \frac{1 - \eta}{1 - \gamma} (2\DD - i) A,
\end{equation}
which reduces to the relation \eqref{ED} when $i = 0$.

\subsection{Projecting and splitting}

For the series $\Delta_x \Gen_g$, the change of variables from $p_1, p_2, \ldots$ to $q_1, q_2, \ldots$ also corresponds to the change of variables from $x, y, z$ to $\hat{x}, \hat{y}, \hat{z}$ defined by the relations
\begin{equation}x = \hat{x} (1 - \gamma)^2, \qquad y = \hat{y} (1 - \gamma)^2, \qquad z = \hat{z} (1 - \gamma)^2.\end{equation}
Since the power series $(1 - \gamma)^2$ is invertible in the ring $R$, we can identify the $R$-algebras $R[[x]]$ and $R[[\hat{x}]]$, and similarly for any subset of $\{x,y,z\}$ and the corresponding subset of $\{\hat{x}, \hat{y}, \hat{z}\}$. In terms of these variables, we have
\begin{equation}\Pi_x = [x^0] + \sum_{k \geq 1} p_k [x^k] = [\hat{x}^0] + \sum_{k \geq 1} q_k [\hat{x}^k],\end{equation}
and similarly for $\Pi_y$, $\Pi_z$, and
\begin{equation}\Split_{x \to y}\big( f(x) \big) = \frac{yf(x) - xf(y)}{x - y} = \frac{\hat{y}f(x) - \hat{x}f(y)}{\hat{x} - \hat{y}}\end{equation}
for a series $f(x) \in R[[x]]$ with no constant term in $x$.

\subsection{Series and polynomials}

In addition to the series
\begin{equation}\gamma = \sum_{k \geq 1} \binom{2k}{k} q_k, \qquad \eta = \sum_{k \geq 1} (2k + 1) \binom{2k}{k} q_k,\end{equation}
we define the series
\begin{equation}\eta_j = \EE^j \eta = \sum_{k \geq 1} k^j (2k + 1) \binom{2k}{k} q_k\end{equation}
for $j \geq 0$. Note that $\eta_0 = \eta$. Our solutions to the join-cut equation will be expressed in terms of these. In particular, for genus 2 and higher, $\Gen_g$ will lie in the subring
\begin{equation}Q = \QQ[(1 - \eta)^{-1}, \eta_1 (1 - \eta)^{-1}, \eta_2 (1 - \eta)^{-1}, \ldots]\end{equation}
of $R$. The further subring
\begin{equation}P = \QQ[\eta_1 (1 - \eta)^{-1}, \eta_2 (1 - \eta)^{-1}, \ldots] \subset Q\end{equation}
will also be useful, especially in the context of the $\QQ$-vector space decomposition
\begin{equation}Q = \bigoplus_{i \geq 0} (1 - \eta)^{-i} P.\end{equation}

Let $u,v,w$ be defined by
\begin{equation}u = (1 - 4\hat{x})^{-\frac{1}{2}}, \qquad v = (1 - 4\hat{y})^{-\frac{1}{2}}, \qquad w = (1 - 4\hat{z})^{-\frac{1}{2}}.\end{equation}
Then, we have
\begin{equation}u = \sum_{k \geq 0} \binom{2k}{k} \hat{x}^k, \quad\text{and}\quad \hat{x}\diff{\hat{x}} = \frac{u^3 - u}{2} \diff{u},\end{equation}
so
\begin{equation}\gamma = \Pi_x (u - 1), \quad
  \eta = \Pi_x (u^3 - 1), \quad
  \eta_1 = \Pi_x \tfrac{3}{2} (u^5 - u^3), \quad
  \ldots\end{equation}
In fact, if we define
\begin{equation}\eta_j^u = \left(\hat{x}\diff{\hat{x}}\right)^j (u^3 - 1),\end{equation}
so that $\eta_j = \Pi_x \eta_j^u$, then it can be seen that for $j \geq 1$, $\eta_j^u$ is an odd polynomial in $u$ of degree $2j + 3$ divisible by $(u^5 - u^3)$. Thus, the set $\{\eta_j^u\}_{j \geq 1}$ is a $\QQ$-basis for the vector space $(u^5 - u^3)\QQ[u^2]$. This will be useful to show that various expressions project down to the subring $Q$ of $R$, or to a particular subspace of $Q$.

\subsection{Lifting}

With this notation, it is useful to compute the image of the lifting operator $\Delta_x$ on some elements of $R[[\hat{x}, \hat{y}, \hat{z}]]$.
\begin{lemma}\label{lem:comp}
\begin{align}
  \Delta_x(q_k) &= k\hat{x}^k + \frac{k(u^3 - u)q_k}{1 - \eta} &
  \Delta_x(\gamma) &= \frac{(u^3 - u)(1 - \gamma)}{2(1 - \eta)} \\
  \Delta_x(\hat{x}) &= \frac{\hat{x}(u^3 - u)}{1 - \eta} &
  \Delta_x(u) &= \frac{(u^3 - u)^2}{2(1 - \eta)} \\
  \Delta_x(\hat{y}) &= \frac{\hat{y}(u^3 - u)}{1 - \eta} &
  \Delta_x(v) &= \frac{(u^3 - u)(v^3 - v)}{2(1 - \eta)} \\
  \Delta_x(\eta_j) &= \eta_{j+1}^u + \frac{(u^3 - u) \eta_{j+1}}{1 - \eta}.
\end{align}
\end{lemma}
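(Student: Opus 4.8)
The plan is to derive all eight identities from a single computation together with the $R$-linearity of $\Delta_x$ and the chain rule. The key observation is that $\Delta_x$ is a derivation on $R[[\hat x, \hat y, \hat z]]$ which kills $\hat x$, $\hat y$, $\hat z$ when these are regarded as the \emph{original} variables $x,y,z$; but under the change of variables $x = \hat x(1-\gamma)^2$ the hatted variables are no longer killed, since $\gamma \in R$ depends on the $p_k$. So the first step is to compute $\Delta_x(q_k)$. Writing $q_k = p_k(1-\gamma)^{-2k}$ (the inverse of \eqref{pqrel}) and applying the product and chain rules, one gets
\begin{equation*}
  \Delta_x(q_k) = (1-\gamma)^{-2k}\,\Delta_x(p_k) + 2k\,p_k(1-\gamma)^{-2k-1}\,\Delta_x(\gamma)
  = k\hat x^k + \frac{2k q_k}{1-\gamma}\,\Delta_x(\gamma),
\end{equation*}
using $\Delta_x(p_k) = k x^k = k\hat x^k(1-\gamma)^{2k}$ directly from the definition of $\Delta_x$. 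So everything reduces to finding $\Delta_x(\gamma)$.

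For $\Delta_x(\gamma)$, apply $\Delta_x$ to $\gamma = \sum_k \binom{2k}{k} q_k$ and substitute the formula just obtained: $\Delta_x(\gamma) = \sum_k \binom{2k}{k}\big(k\hat x^k + \tfrac{2k q_k}{1-\gamma}\Delta_x(\gamma)\big)$. The first sum is $\sum_k k\binom{2k}{k}\hat x^k = \tfrac{u^3-u}{2}$, using $u = \sum_k \binom{2k}{k}\hat x^k$ and the differential relation $\hat x\,\partial/\partial\hat x = \tfrac{u^3-u}{2}\,\partial/\partial u$ recorded just before the lemma. The coefficient of $\Delta_x(\gamma)$ on the right is $\tfrac{1}{1-\gamma}\sum_k 2k\binom{2k}{k}q_k = \tfrac{\eta-\gamma}{1-\gamma}$, so solving the linear equation gives $\Delta_x(\gamma)\big(1 - \tfrac{\eta-\gamma}{1-\gamma}\big) = \tfrac{u^3-u}{2}$, i.e.\ $\Delta_x(\gamma) = \tfrac{(u^3-u)(1-\gamma)}{2(1-\eta)}$, which is the second identity. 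Back-substituting into the formula for $\Delta_x(q_k)$ yields the first. The identities for $\Delta_x(\hat x)$ and $\Delta_x(\hat y)$ then follow from $\hat x = x(1-\gamma)^{-2}$, $\hat y = y(1-\gamma)^{-2}$ (note $\Delta_x$ kills $x$ and $y$ in the original coordinates, so $\Delta_x(\hat x) = 2x(1-\gamma)^{-3}\Delta_x(\gamma) = \tfrac{2\hat x}{1-\gamma}\Delta_x(\gamma)$), and one reads off $\Delta_x(\hat x) = \tfrac{\hat x(u^3-u)}{1-\eta}$, likewise for $\hat y$.

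The remaining three identities are chain-rule consequences. Since $u = (1-4\hat x)^{-1/2}$, we have $\Delta_x(u) = \tfrac{\partial u}{\partial\hat x}\,\Delta_x(\hat x) = 2u^3\cdot\tfrac{\hat x(u^3-u)}{1-\eta}$; rewriting $2u^3\hat x = \tfrac{u^3-u}{2}\cdot\tfrac{2u^3\hat x}{(u^3-u)/2}$ — more directly, $4\hat x u^2 = 1 - u^{-2}\cdot u^2\cdot\ldots$ — one uses $1-4\hat x = u^{-2}$, hence $4\hat x = 1-u^{-2}$ and $2u^3\hat x = \tfrac{u^3-u}{2}$, giving $\Delta_x(u) = \tfrac{(u^3-u)^2}{2(1-\eta)}$. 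Similarly $v = (1-4\hat y)^{-1/2}$ gives $\Delta_x(v) = 2v^3\Delta_x(\hat y) = 2v^3\cdot\tfrac{\hat y(u^3-u)}{1-\eta}$, and $2v^3\hat y = \tfrac{v^3-v}{2}$ by the same manipulation, so $\Delta_x(v) = \tfrac{(u^3-u)(v^3-v)}{2(1-\eta)}$. Finally, for $\Delta_x(\eta_j)$ one applies $\Delta_x$ to $\eta_j = \sum_k k^j(2k+1)\binom{2k}{k}q_k$, substitutes the formula for $\Delta_x(q_k)$, and splits the result into the part $\sum_k k^{j+1}(2k+1)\binom{2k}{k}\hat x^k = \big(\hat x\,\partial/\partial\hat x\big)^{j+1}(u^3-1) = \eta_{j+1}^u$ (using $\eta_j^u = (\hat x\,\partial/\partial\hat x)^j(u^3-1)$) plus the part $\tfrac{\Delta_x(\gamma)}{1-\gamma}\cdot 2\sum_k k^{j+1}\binom{2k}{k}q_k$. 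Here $2\sum_k k^{j+1}\binom{2k}{k}q_k = \eta_{j+1} - \gamma_{j+1}$ where the $\gamma$-part combines with $\eta_{j+1}^u$'s projection; a short bookkeeping check — essentially that $\Pi_x\eta_{j+1}^u = \eta_{j+1}$ and that the extra term assembles to $\tfrac{(u^3-u)\eta_{j+1}}{1-\eta}$ — completes it. I expect the main obstacle to be purely organizational: keeping straight which variables $\Delta_x$ annihilates (the original $x,y,z$, not $\hat x,\hat y,\hat z$), and carefully tracking the $\gamma$-versus-$\eta$ bookkeeping in the last identity so that the projection $\Pi_x$ is applied consistently; none of the individual steps is deep, but the change of coordinates makes sign and index errors easy.
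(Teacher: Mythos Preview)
The paper states Lemma~\ref{lem:comp} without proof, so there is nothing to compare against; your proposal is the natural verification and is essentially correct. The strategy of first expressing $\Delta_x(q_k)$ in terms of $\Delta_x(\gamma)$, then solving for $\Delta_x(\gamma)$ self-consistently, then deducing the rest by chain rule, is exactly right and all the intermediate computations up through $\Delta_x(v)$ check out.

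There is one bookkeeping slip in the final identity for $\Delta_x(\eta_j)$: you write the second part as $\tfrac{\Delta_x(\gamma)}{1-\gamma}\cdot 2\sum_k k^{j+1}\binom{2k}{k}q_k$, but the factor $(2k+1)$ from the definition of $\eta_j$ has been dropped. The correct second part is $\tfrac{2\Delta_x(\gamma)}{1-\gamma}\sum_k k^{j+1}(2k+1)\binom{2k}{k}q_k = \tfrac{2\Delta_x(\gamma)}{1-\gamma}\,\eta_{j+1}$, which after substituting $\Delta_x(\gamma) = \tfrac{(u^3-u)(1-\gamma)}{2(1-\eta)}$ gives $\tfrac{(u^3-u)\eta_{j+1}}{1-\eta}$ directly, with no need to introduce a $\gamma_{j+1}$ or invoke projections. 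Equivalently, just use the already-simplified formula $\Delta_x(q_k) = k\hat x^k + \tfrac{k(u^3-u)q_k}{1-\eta}$ from the start of this step; then the computation is a one-liner:
\[
  \Delta_x(\eta_j) = \sum_{k\geq 1} k^j(2k+1)\binom{2k}{k}\left(k\hat x^k + \frac{k(u^3-u)q_k}{1-\eta}\right) = \eta_{j+1}^u + \frac{(u^3-u)\eta_{j+1}}{1-\eta}.
\]
So the only obstacle is indeed the organizational one you flagged, and once that slip is corrected the argument is complete.
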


From this list, we can also conclude that $\Delta_x$ preserves some important subspaces of $Q[u]$.
\begin{corollary}\label{cor:deltasubspace}
For each $i, j \geq 0$, the operator $\Delta_x$ restricts to a function
\begin{equation}\Delta_x \colon (u^3 - u)^i (1 - \eta)^{-j} P[u^2] \to (u^3 - u)^{i+1} (1 - \eta)^{-j-1} P[u^2].\end{equation}
\end{corollary}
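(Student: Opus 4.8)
The plan is to use that $\Delta_x$ is a derivation and that $P[u^2]$ is a finitely generated $\QQ$-algebra, so that the statement follows from the Leibniz rule once it is verified on generators, with all the needed input supplied by Lemma~\ref{lem:comp} together with two facts already recorded above: $\eta_m(1-\eta)^{-1}\in P$ for every $m\ge 1$, and $\eta_m^u\in(u^5-u^3)\QQ[u^2]\subseteq(u^3-u)\QQ[u^2]$ for every $m\ge 1$. I would first settle the base case $i=j=0$ and then reduce the general case to it.

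\textbf{Base case.} First I would prove $\Delta_x\bigl(P[u^2]\bigr)\subseteq(u^3-u)(1-\eta)^{-1}P[u^2]$. As a $\QQ$-algebra, $P[u^2]$ is generated by $u^2$ and by the elements $\eta_\ell(1-\eta)^{-1}$ with $\ell\ge 1$; since $(u^3-u)(1-\eta)^{-1}P[u^2]$ is stable under multiplication by $P[u^2]$, the derivation property reduces the claim to these generators. For $u^2$, Lemma~\ref{lem:comp} gives $\Delta_x(u^2)=2u\,\Delta_x(u)=(u^3-u)(1-\eta)^{-1}(u^4-u^2)$ with $u^4-u^2\in\QQ[u^2]\subseteq P[u^2]$. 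For $\eta_\ell(1-\eta)^{-1}$, the product rule together with the formulas $\Delta_x(\eta_\ell)=\eta_{\ell+1}^u+(u^3-u)\eta_{\ell+1}(1-\eta)^{-1}$ and $\Delta_x(\eta)=\eta_1^u+(u^3-u)\eta_1(1-\eta)^{-1}$ produces four terms, from each of which a factor $(u^3-u)(1-\eta)^{-1}$ can be pulled out: the terms involving some $\eta_m^u$ use $\eta_m^u\in(u^3-u)\QQ[u^2]$, and the leftover negative powers of $1-\eta$ are absorbed into the coefficient ring using $\eta_m(1-\eta)^{-1}\in P$.

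\textbf{General case.} Now fix $i,j\ge 0$ and write an element of $(u^3-u)^i(1-\eta)^{-j}P[u^2]$ as $F=(u^3-u)^i(1-\eta)^{-j}G$ with $G\in P[u^2]$. The Leibniz rule gives
\[
\Delta_x F=i(u^3-u)^{i-1}\Delta_x(u^3-u)(1-\eta)^{-j}G+j(1-\eta)^{-j-1}\Delta_x(\eta)(u^3-u)^iG+(u^3-u)^i(1-\eta)^{-j}\Delta_x G.
\]
In the first summand, $\Delta_x(u^3-u)=(3u^2-1)\Delta_x(u)=\tfrac{1}{2}(3u^2-1)(u^3-u)^2(1-\eta)^{-1}$, so it equals $(u^3-u)^{i+1}(1-\eta)^{-j-1}$ times an element of $P[u^2]$. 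In the second summand, substituting $\Delta_x(\eta)=\eta_1^u+(u^3-u)\eta_1(1-\eta)^{-1}$ and using $\eta_1^u\in(u^3-u)\QQ[u^2]$ and $\eta_1(1-\eta)^{-1}\in P$ again yields $(u^3-u)^{i+1}(1-\eta)^{-j-1}$ times an element of $P[u^2]$. The third summand lies in $(u^3-u)^{i+1}(1-\eta)^{-j-1}P[u^2]$ by the base case. Summing the three contributions proves the corollary.

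\textbf{Main obstacle.} Given Lemma~\ref{lem:comp}, the argument is essentially mechanical; the only point requiring care is the bookkeeping of negative powers of $1-\eta$, since the product rule naively produces factors such as $(1-\eta)^{-j-2}$ and $(1-\eta)^{-j-3}$, and one must systematically rewrite $\eta_m(1-\eta)^{-k}=\bigl(\eta_m(1-\eta)^{-1}\bigr)(1-\eta)^{-(k-1)}$ to push the excess into $P$ so that exactly one power of $(1-\eta)^{-1}$ is gained. A companion point is that $u^3-u$ is odd in $u$ while $P[u^2]$ consists of even polynomials, so one must always pair the odd factors ($u^3-u$ and the various $\eta_m^u$) correctly; the divisibility $\eta_m^u\in(u^5-u^3)\QQ[u^2]$ is precisely what makes each ``even part'' land in $P[u^2]$.
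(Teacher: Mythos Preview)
Your argument is correct and is exactly the kind of Leibniz-rule verification the paper has in mind; in the paper the corollary is stated without proof, immediately after Lemma~\ref{lem:comp}, as a direct consequence of those computations. One small quibble: $P[u^2]$ is not a \emph{finitely} generated $\QQ$-algebra (there are infinitely many generators $\eta_\ell(1-\eta)^{-1}$), but this is harmless since each element involves only finitely many of them, and your argument does not actually use finiteness.
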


\section{Genus zero}\label{sec:genus-zero}


In this section, we prove Theorem \ref{thm:gzformula}.
Our strategy is to define the series $\Gen_0$ by
\begin{equation}\label{gggdef}
  \Gen_0 = \sum_{d \geq 1} \sum_{\alpha \vdash d} \sum_{r \geq 0} (2d + 1) (2d + 2) \cdots (2d + \ell - 3) \frac{|C_\alpha|}{d!}p_\alpha \prod_{k = 1}^\ell \alpha_k \binom{2\alpha_k}{\alpha_k}
\end{equation}
and then to show that the series $\Delta_x \Gen_0$ satisfies the genus zero join-cut equation \eqref{ljcgz} of \autoref{thm:jcgenus}. This involves performing a Lagrangian change of variables to get a closed form for $\Delta_x \Gen_0$.

\begin{theorem}
  With the definition above for $\Gen_0$,
  \begin{equation}(2\DD - 2) (2\DD - 1) (2\DD) \Gen_0 = \frac{(1 - \gamma)^3}{1 - \eta} - 1.\end{equation}
\end{theorem}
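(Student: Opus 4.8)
The plan is to compute the left-hand side directly from the definition \eqref{gggdef} of $\Gen_0$ and recognize the result as the generating series of the right-hand side. First I would apply the operator $(2\DD-2)(2\DD-1)(2\DD)$ term by term: since $\DD p_\alpha = d\, p_\alpha$ for $\alpha \vdash d$, each summand picks up the factor $(2d-2)(2d-1)(2d)$. Combined with the existing rising product $(2d+1)(2d+2)\cdots(2d+\ell-3) = (2d+1)^{\overline{\ell-3}}$, this telescopes: $(2d-2)(2d-1)(2d)(2d+1)^{\overline{\ell-3}} = (2d-2)^{\overline{\ell}}/(2d+\ell-3)\cdots$—more precisely, after the convention for negative rising powers is unwound, multiplying by $(2d-2)(2d-1)(2d)$ shifts the rising product so that $(2\DD-2)(2\DD-1)(2\DD)\Gen_0 = \sum_{d\geq 1}\sum_{\alpha\vdash d}\sum_{r\geq 0}(2d-2)^{\overline{\ell}} \frac{|C_\alpha|}{d!}p_\alpha \prod_k \alpha_k\binom{2\alpha_k}{\alpha_k}$. (Here I am using $(2d-2)^{\overline{\ell}} = (2d-2)(2d-1)\cdots(2d+\ell-3)$, valid even when $\ell \leq 2$ by the stated convention, so no degenerate cases need separate treatment.)

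Next I would pass to the $q$-variables. Recall $p_k = q_k(1-\gamma)^{2k}$ and $\gamma = \sum_k\binom{2k}{k}q_k$, and the combinatorial identity $\frac{|C_\alpha|}{d!}\prod_k\alpha_k = \frac{1}{\prod_i i^{m_i(\alpha)}m_i(\alpha)!}\prod_k\alpha_k = \frac{1}{\prod_i m_i(\alpha)! \, i^{m_i(\alpha)-1}}$, which lets one rewrite $\sum_{\alpha\vdash d}\frac{|C_\alpha|}{d!}p_\alpha\prod_k\alpha_k\binom{2\alpha_k}{\alpha_k}$ as the degree-$d$ part (graded by $\DD$) of a product. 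The key observation is that $\sum_{k\geq 1}\binom{2k}{k}p_k$, when expanded, is not quite $\gamma$, but rather $\gamma$ re-expressed; the relation \eqref{pqrel} is precisely designed so that the Lagrangian series $s^k p_k \leftrightarrow q_k$ absorbs the rising-product factors. Concretely, I would show that $\sum_d\big(\sum_{\alpha\vdash d}\frac{|C_\alpha|}{d!}p_\alpha\prod_k\alpha_k\binom{2\alpha_k}{\alpha_k}\big)$ equals $\DD$ applied to $-\log(1-\gamma)$ or a similar closed form, using that $\DD$ has $\{p_\alpha\}$ as eigenbasis while $\EE$ has $\{q_\alpha\}$ as eigenbasis, and the conversion formulas \eqref{ED}, \eqref{etod}. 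The rising product $(2d-2)^{\overline{\ell}}$ is handled by iterating \eqref{intfact}: each factor $(2\EE - i)$ applied to $(1-\gamma)^i A$ produces $\frac{1-\eta}{1-\gamma}(2\DD-i)A$, so three applications of the identity convert $(2\DD-2)(2\DD-1)(2\DD)$ acting in $\DD$-grading into an expression with three powers of $\frac{1-\eta}{1-\gamma}$ — but since we want the answer $\frac{(1-\gamma)^3}{1-\eta}-1$, the powers must combine the other way, which is why one starts from a closed form in $q$-variables and pushes the operators through \eqref{intfact} in the direction that produces $(1-\gamma)^i$ from $(1-\eta)^{-1}$.

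The cleanest route, which I would actually carry out: guess that $(2\DD-2)(2\DD-1)(2\DD)\Gen_0 = F$ for some explicit $F$, and verify by checking that $F$ has the right $q$-coefficients. Since $[q_\alpha]$ and $[p_\alpha]$ are related by Theorem \ref{thm:LIFT} via $[p_\alpha]f = [q_\alpha]f\frac{1-\eta}{(1-\gamma)^{2d+1}}$, one has $[p_\alpha]\big((2\DD-2)(2\DD-1)(2\DD)\Gen_0\big) = (2d-2)(2d-1)(2d)[p_\alpha]\Gen_0$, and by the definition of $\Gen_0$, $[p_\alpha]\Gen_0 = (2d+1)^{\overline{\ell-3}}\frac{|C_\alpha|}{d!}\prod_k\alpha_k\binom{2\alpha_k}{\alpha_k}$ (summed over the single relevant $r$). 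Meanwhile $[q_\alpha]\big(\frac{(1-\gamma)^3}{1-\eta}\big)$ can be computed by expanding $(1-\gamma)^3(1-\eta)^{-1}$ as a power series in the $q_k$'s and extracting the coefficient; Lagrange inversion on the relation $s = z(1-\gamma)^{-2}$, i.e.\ $q_k = p_k(1-\gamma)^{2k}$, gives $[q_\alpha]\frac{(1-\gamma)^3}{1-\eta} = [p_\alpha]\frac{(1-\gamma)^3}{1-\eta}\cdot\frac{(1-\gamma)^{2d+1}}{1-\eta}$ — wait, rather directly: Theorem \ref{thm:LIFT} with $f = 1$ and $f = $ monomials in $\eta_j$ pins down all coefficients. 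Matching the two sides reduces to the combinatorial identity that the coefficient of $q_\alpha$ in $\frac{(1-\gamma)^3}{1-\eta}$ equals $(2d-2)(2d-1)(2d)(2d+1)^{\overline{\ell-3}}\frac{|C_\alpha|}{d!}\prod_k\alpha_k\binom{2\alpha_k}{\alpha_k}\cdot\frac{(1-\gamma)^{\text{(correction)}}}{\cdots}$ evaluated at the appropriate point — this is a finite check using the explicit form of $\gamma$ and $\eta$.

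\medskip

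I expect the main obstacle to be \textbf{bookkeeping the rising-product and factorial factors through the Lagrange inversion}: the factor $\prod_k\alpha_k\binom{2\alpha_k}{\alpha_k}$ is exactly the derivative $\hat{x}\frac{d}{d\hat{x}}$ applied to $u = \sum_k\binom{2k}{k}\hat{x}^k$ (cf.\ Lemma \ref{lem:comp}, where $\hat x \frac{d}{d\hat x} = \frac{u^3-u}{2}\frac{d}{du}$), so $\sum_{\alpha\vdash d}\frac{|C_\alpha|}{d!}p_\alpha\prod_k\alpha_k\binom{2\alpha_k}{\alpha_k}$ should be expressible via $\Pi_x$ acting on a polynomial in $u$. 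The challenge is to assemble the three-fold rising product $(2\DD-2)(2\DD-1)(2\DD)$ correctly on the $q$-side; here the identity \eqref{intfact}, $(1-\gamma)^i(2\EE-i)\big((1-\gamma)^iA\big) = \frac{1-\eta}{1-\gamma}(2\DD-i)A$, is the essential tool, applied with $i = 0, 1, 2$ in succession, so that $(2\DD)(2\DD-1)(2\DD-2)$ acting on $\Gen_0$ unwinds to $(1-\gamma)^{?}(1-\eta)^{?}$ times an $\EE$-graded quantity whose closed form is transparent. Once the powers of $(1-\gamma)$ and $(1-\eta)$ are tracked, the remaining identity is a routine consequence of the fact that $\gamma$ and $\eta$ are related by $2\EE\gamma = \eta-\gamma$.
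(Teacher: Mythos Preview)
Your opening move is correct: applying $(2\DD-2)(2\DD-1)(2\DD)$ to the definition of $\Gen_0$ immediately gives
\[
[p_\alpha]\big((2\DD-2)(2\DD-1)(2\DD)\Gen_0\big) = (2d-2)^{\overline{\ell}}\,\frac{|C_\alpha|}{d!}\prod_{k}\alpha_k\binom{2\alpha_k}{\alpha_k},
\]
and you are right that Theorem~\ref{thm:LIFT} is the bridge to the $q$-side. But the proposal never actually closes the loop; it circles around the key identity without stating it, and the detours through \eqref{intfact} and the $\Pi_x$/$u$-polynomial machinery are red herrings for \emph{this} theorem.

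The missing step is a single negative-binomial recognition. Apply Theorem~\ref{thm:LIFT} to the target series:
\[
[p_\alpha]\,\frac{(1-\gamma)^3}{1-\eta} \;=\; [q_\alpha]\,\frac{(1-\gamma)^3}{1-\eta}\cdot\frac{1-\eta}{(1-\gamma)^{2d+1}} \;=\; [q_\alpha]\,(1-\gamma)^{2-2d}.
\]
Now expand $(1-\gamma)^{2-2d}$ as a binomial series in $\gamma=\sum_k\binom{2k}{k}q_k$: for $\alpha$ with $\ell$ parts and multiplicities $m_i$,
\[
[q_\alpha](1-\gamma)^{2-2d} \;=\; (-1)^\ell\binom{2-2d}{\ell}\binom{\ell}{m_1,m_2,\ldots}\prod_{j}\binom{2j}{j}^{m_j}.
\]
Since $(-1)^\ell\binom{2-2d}{\ell}\ell! = (2d-2)^{\overline{\ell}}$ and $\frac{|C_\alpha|}{d!}\prod_k\alpha_k = \prod_i \frac{1}{m_i!}$, this is exactly the coefficient computed above. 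That, together with checking the constant term, is the whole proof.

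Your attempt to route through \eqref{intfact} is circular: that identity trades $(2\DD-i)$ for $(2\EE-i)$ conjugated by powers of $(1-\gamma)$, but only once you already have the series in closed $q$-form. In the paper it is used in the \emph{next} theorem --- to integrate $\frac{(1-\gamma)^3}{1-\eta}-1$ down to $\Gen_0''$, $\D_k\Gen_0'$, $\D_k\Gen_0$ --- not to establish the present identity. Likewise the $u$-variable and $\Pi_x$ apparatus only enter later, when computing $\Delta_x\Gen_0$; invoking them here is premature and does not help.
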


\begin{proof}
  For $\alpha \vdash d \geq 0$, $p_\alpha$ is an eigenvector of the operator $\DD$ with eigenvalue $d$, so we can use this operator to transform the expression for $\Gen_0$ into a negative binomial. For $\alpha \vdash d \geq 1$, using \autoref{thm:LIFT}, we have
  \begin{multline}
    [p_\alpha] (2\DD - 2) (2\DD - 1) (2\DD) \Gen_0 \\
    \begin{aligned}
      &= (2d - 2) (2d - 1) (2d) (2d + 1) \cdots (2d + \ell - 3) \frac{\abs{\C_\alpha}}{d!}
         \prod_{k = 1}^\ell \alpha_k \binom{2\alpha_k}{\alpha_k} \\
      &= (-1)^\ell \binom{2 - 2d}{\ell} \binom{\ell}{m_1, m_2, \ldots}
         \prod_{j \geq 1} \binom{2j}{j}^{m_j} p_j^{m_j} \\
      &= [q_\alpha] (1 - \gamma)^{2 - 2d} \\
      &= [p_\alpha] \frac{(1 - \gamma)^3}{1 - \eta}.
    \end{aligned}
  \end{multline}

  \begin{remark}
    This formula is the main motivation for the definition of $s$ through the functional relation
    \begin{equation}s = z \left( 1 - \sum_{k \geq 1} \binom{2k}{k} s^k p_k \right)^{-2}\end{equation}
    and the change of variables from $z$ to $s$ or, equivalently, from $p_1, p_2, \ldots$ to $q_1, q_2, \ldots$.
  \end{remark}

  After computing the coefficient of $p_\alpha$ when $d = 0$ separately, we get
  \begin{equation}(2\DD - 2) (2\DD - 1) (2\DD) \Gen_0 = \frac{(1 - \gamma)^3}{1 - \eta} - 1.\qedhere\end{equation}
\end{proof}

\begin{theorem}
  Let
  \begin{equation}\Gen_0' = (2\DD) \Gen_0, \qquad \Gen_0'' = (2\DD - 1) \Gen_0', \qquad \Gen_0''' = (2\DD - 2) \Gen_0''.\end{equation}
  Then,
  \begin{align}
    \label{ggg2} \Gen_0'' &= \tfrac{1}{2} - \tfrac{1}{2} (1 - \gamma)^2, \\
    \label{dggg2} \D_k \Gen_0'' &= \frac{(1 - \gamma)^2}{1 - \eta} \binom{2k}{k} q_k, \\
    \label{dggg1} \D_k \Gen_0' &= (1 - \gamma) \frac{1}{2k - 1} \binom{2k}{k} q_k, \\
    \label{dggg} \D_k \Gen_0 &= \frac{1}{2k(2k - 1)} \binom{2k}{k} q_k - \sum_{j \geq 1} \frac{2j + 1}{2(j + k)(2k - 1)} \binom{2j}{j} \binom{2k}{k} q_j q_k.
  \end{align}
\end{theorem}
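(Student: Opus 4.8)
The plan is to obtain the four identities in a cascade, starting from the negative-binomial closed form of the previous theorem, namely
\begin{equation*}
  (2\DD - 2)(2\DD - 1)(2\DD)\,\Gen_0 = \frac{(1 - \gamma)^3}{1 - \eta} - 1,
\end{equation*}
which in the notation $\Gen_0' = (2\DD)\Gen_0$, $\Gen_0'' = (2\DD - 1)\Gen_0'$, $\Gen_0''' = (2\DD - 2)\Gen_0''$ simply reads $\Gen_0''' = (1-\gamma)^3/(1-\eta) - 1$. First I would establish \eqref{ggg2}: I want to ``integrate'' the equation $\Gen_0''' = (2\DD-2)\Gen_0''$ against the right-hand side. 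The key computational tool is the identity \eqref{intfact}, which with $i=2$ says $(1-\gamma)^2(2\EE-2)\big((1-\gamma)^2 A\big) = \tfrac{1-\eta}{1-\gamma}(2\DD-2)A$. Applying this with $A$ chosen so that $(1-\gamma)^2 A$ is a candidate antiderivative, and using $2\EE\gamma = \eta - \gamma$ to evaluate $(2\EE - 2)$ on a constant and on $(1-\gamma)^2$ directly, one verifies that $\tfrac12 - \tfrac12(1-\gamma)^2$ is annihilated appropriately: indeed $(2\DD-2)\big(\tfrac12 - \tfrac12(1-\gamma)^2\big)$ should reproduce $(1-\gamma)^3/(1-\eta) - 1$. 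Since $\DD$ has the $p_\alpha$ as eigenbasis, $2\DD - 2$ is injective on the space of series with no constant term and no linear term, and a quick check of the constant and linear coefficients pins down the antiderivative uniquely, giving \eqref{ggg2}.

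Next, \eqref{dggg2} follows by applying $\D_k$ to \eqref{ggg2}: since $\D_k(1-\gamma)^2 = 2(1-\gamma)\D_k(1-\gamma)$ and $\D_k\gamma = \binom{2k}{k}\D_k q_k$ — but here one must be careful, because $\gamma$ is a series in the $q$'s, not the $p$'s, so I would instead use $\D_k$ in terms of $\E_k$ via \eqref{dtoe}, namely $\D_k = \E_k + \tfrac{2q_k}{1-\eta}\binom{2k}{k}\EE$, together with $\E_k \gamma = \binom{2k}{k} q_k$ and $\EE\gamma = \tfrac12(\eta-\gamma)$. Substituting into $\D_k\big(\tfrac12 - \tfrac12(1-\gamma)^2\big)$ and simplifying yields $\tfrac{(1-\gamma)^2}{1-\eta}\binom{2k}{k}q_k$, which is \eqref{dggg2}.

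For \eqref{dggg1} and \eqref{dggg}, the idea is to undo the operators $(2\DD - 1)$ and $(2\DD)$ relating $\Gen_0''$ to $\Gen_0'$ and $\Gen_0'$ to $\Gen_0$, but now at the level of the $\D_k$-derivatives. Since $\D_k$ and $\DD$ commute (both are diagonal in the $p_\alpha$ basis), we have $(2\DD + 2k - 1)\,\D_k\Gen_0' = \D_k\Gen_0''$ and $(2\DD + 2k)\,\D_k\Gen_0 = \D_k\Gen_0'$ — the shift by $2k$ comes from $\DD\,\D_k p_\alpha = (|\alpha|+k)\D_k p_\alpha$ when $\D_k p_\alpha \neq 0$. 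So from \eqref{dggg2} I would need to solve $(2\DD + 2k - 1)\big(\D_k\Gen_0'\big) = \tfrac{(1-\gamma)^2}{1-\eta}\binom{2k}{k}q_k$. Here \eqref{intfact} enters again: with $i = 1-2k$ (or the appropriate shift) it converts the operator $(2\DD + 2k - 1) = (2\DD - i)$ acting after multiplication by $(1-\gamma)^i$ into $(1-\gamma)^i(2\EE - i)$ times $\tfrac{1-\eta}{1-\gamma}$, and since the right-hand side already carries the factor $\tfrac{1-\eta}{1-\gamma}$, the equation becomes a pure $\EE$-equation on a monomial $q_k$, which is trivial to solve because $\EE q_k = k q_k$. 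This produces the factor $\tfrac{1}{2k-1}$ in \eqref{dggg1}. Iterating once more with $(2\DD + 2k)$ splits into a $q_k$-only term (giving $\tfrac{1}{2k(2k-1)}\binom{2k}{k}q_k$) plus a correction coming from the $\tfrac{1}{1-\eta}$ and $(1-\gamma)$ factors expanded back into the $q_j$'s, which produces the bilinear sum in \eqref{dggg}.

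The main obstacle I anticipate is bookkeeping the interplay between the two operator systems: $\D_k,\DD$ are diagonal in the $p$-basis while the closed forms live naturally in the $q$-variables where $\E_k,\EE$ are diagonal, and the conversion formulas \eqref{etod}–\eqref{intfact} have $q_k$-dependent coefficients, so each application of $\D_k$ to a $q$-expression generates extra terms. Getting the shift $i$ in \eqref{intfact} correct at each stage — so that the $\tfrac{1-\eta}{1-\gamma}$ factors telescope cleanly rather than accumulating — is the delicate point, and it is also where one must separately check the constant/linear coefficients to fix the ambiguity in inverting $2\DD + c$ when $c = 0$ (which does not occur here since $k \geq 1$, but the analogous care is needed to rule out homogeneous solutions).
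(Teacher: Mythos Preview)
Your overall strategy---deriving \eqref{ggg2} from the negative-binomial identity via \eqref{intfact} with $i=2$, then obtaining \eqref{dggg2} by applying $\D_k$, and finally peeling off $(2\DD-1)$ and $(2\DD)$ using \eqref{intfact} again---is exactly what the paper does. However, there is a genuine error in your commutation relations for the last two steps.

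You correctly observe that $\D_k$ and $\DD$ commute because both are diagonal in the $p_\alpha$ basis, but then you immediately write $(2\DD + 2k - 1)\,\D_k\Gen_0' = \D_k\Gen_0''$, justified by ``$\DD\,\D_k p_\alpha = (|\alpha|+k)\D_k p_\alpha$''. This is false: since $\D_k = p_k\,\partial/\partial p_k$ acts on $p_\alpha$ as multiplication by the scalar $m_k(\alpha)$, it preserves the $\DD$-degree, and $\DD\,\D_k p_\alpha = |\alpha|\,\D_k p_\alpha$. You are likely thinking of the bare derivative $\partial/\partial p_k$, which does shift degree by $k$; but $\D_k$ does not. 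Consequently the correct relations are simply
\[
  (2\DD - 1)\,\D_k\Gen_0' = \D_k\Gen_0'', \qquad (2\DD)\,\D_k\Gen_0 = \D_k\Gen_0',
\]
and the paper inverts these using \eqref{intfact} with $i=1$ and $i=0$ respectively---not $i = 1-2k$ as you suggest. With your shifted operator one would get $(2\EE + 2k - 1)^{-1}q_k = \tfrac{1}{4k-1}q_k$ rather than the required $\tfrac{1}{2k-1}q_k$, so the error is not cosmetic. Once the spurious shift is removed, your sketch coincides with the paper's proof.
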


\begin{proof}
  By \eqref{intfact} with $i = 2$, we have
  \begin{align}
    \Gen_0''
      &= (2\DD - 2)^{-1} \Gen_0''' \\
      &= \tfrac{1}{2} + (2\DD - 2)^{-1} (\Gen_0''' + 1) \\
      &= \tfrac{1}{2} + (1 - \gamma)^2 (2\EE - 2)^{-1} 1 \\
      &= \tfrac{1}{2} - \tfrac{1}{2} (1 - \gamma)^2.
  \end{align}
  Note that the kernel of $(2\DD - 2)$ is spanned by $p_\alpha$ where $d = 1$, that is, $p_1$. Since the constant and linear terms for power series in $p_1, p_2, \ldots$ and series in $q_1, q_2, \ldots$ are equal, it is easy to check that this expression for $\Gen_0''$ agrees with the definition of $\Gen_0$ on the coefficient of $p_1$. This establishes \eqref{ggg2}.
  Using the relation \eqref{ED} to convert between $\DD$ and $\EE$, we then have
  \begin{equation}\D_k \Gen_0'' = (1 - \gamma) \D_k \gamma
                  = \frac{(1 - \gamma)^2}{1 - \eta} \binom{2k}{k} q_k,\end{equation}
  which establishes \eqref{dggg2}.
  
  Since the operator $\D_k$ commutes with $(2\DD - 1)$, we can use \eqref{intfact} again, this time with $i = 1$, to get
  \begin{align}
    \D_k \Gen_0'
      &= (2\DD - 1)^{-1} \D_k \Gen_0'' \\
      &= (1 - \gamma) (2\EE - 1)^{-1} \binom{2k}{k} q_k \\
      &= (1 - \gamma) \frac{1}{2k - 1} \binom{2k}{k} q_k.
  \end{align}
  This establishes \eqref{dggg1}.
  
  Finally, since $\D_k$ also commutes with $(2\DD)$ and we know that $\D_k \Gen_0$ has no constant term, by \eqref{intfact} with $i = 0$, we have
  \begin{align}
    \D_k \Gen_0
      &= (2\DD)^{-1} \D_k \Gen_0' \\
      &= (2\EE)^{-1} (1 - \eta) \frac{1}{2k - 1} \binom{2k}{k} q_k \\
      &= (2\EE)^{-1} \frac{1}{2k - 1} \binom{2k}{k} q_k - \sum_{j \geq 1} \frac{2j + 1}{2k - 1} \binom{2j}{j} \binom{2k}{k} q_j q_k \\
      &= \frac{1}{2k(2k - 1)} \binom{2k}{k} q_k - \sum_{j \geq 1} \frac{2j + 1}{2(j + k)(2k - 1)} \binom{2j}{j} \binom{2k}{k} q_j q_k.
  \end{align}
  This establishes \eqref{dggg}.
\end{proof}

With this expression for the partial derivatives of $\Gen_0$, we can compute $\Delta_x \Gen_0$. To get a concise expression, we introduce the following power series.

\begin{lemma}\label{lem:fseries}
  The power series
  \begin{equation}F(\hat{x}, \hat{y}) = \sum_{j \geq 0} \sum_{k \geq 1} \frac{(2j + 1)k}{2(j + k)(2k - 1)} \binom{2j}{j} \binom{2k}{k} \hat{x}^k \hat{y}^j\end{equation}
  can be expressed as
  \begin{equation}F(\hat{x}, \hat{y}) = \frac{(u^2 - 1) v^2}{2u(u + v)}.\end{equation}
\end{lemma}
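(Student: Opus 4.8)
The plan is to recognize the double sum defining $F(\hat x,\hat y)$ as a product, up to simple operations, of the basic generating functions $u=(1-4\hat x)^{-1/2}$ and $v=(1-4\hat y)^{-1/2}$, by first splitting the summand via the partial-fraction identity
\begin{equation*}
  \frac{1}{(j+k)(2k-1)} = \frac{2}{2k-1}\cdot\frac{1}{2j+2k} = \frac{1}{2k-1}\left(\frac{1}{j+k}\right),
\end{equation*}
and then massaging it so that the $\hat x$-sum and $\hat y$-sum decouple. Concretely, I would write $\tfrac{1}{j+k}\hat x^k\hat y^j$ in integral form, $\int_0^1 (\hat x t)^k (\hat y t)^{j}\,dt\cdot$(something), or equivalently note that $\sum_{k\geq 1}\binom{2k}{k}\tfrac{k}{2k-1}\hat x^k$ and $\sum_{j\geq 0}\binom{2j}{j}(2j+1)\hat y^j$ are each known closed forms — the former equals $\tfrac12(u-u^{-1})=\tfrac{u^2-1}{2u}$ (since $\hat x\,\partial_{\hat x}$ acting on $u$ gives $\tfrac{u^3-u}{2}$, and dividing by $2k-1$ is an antiderivative-type operation), and the latter is $\tfrac{d}{d\hat y}\big(\hat y\sum \binom{2j}{j}\hat y^j\big)$-type, giving $v^3$. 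The factor $\tfrac{1}{j+k}$ is exactly what couples the two, and it is handled by the operator identity $\tfrac{1}{j+k} = \tfrac{1}{\hat x\partial_{\hat x} + \hat y\partial_{\hat y}}$ acting on monomials $\hat x^k\hat y^j$, i.e. an Euler-operator inverse.

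In more detail, the key steps in order are: (1) Verify that
\begin{equation*}
  G(\hat x) := \sum_{k\geq 1}\binom{2k}{k}\frac{\hat x^k}{2k-1} = 1 - u^{-1},
\end{equation*}
by checking that $(1 - 2\hat x\partial_{\hat x})G = \sum\binom{2k}{k}\hat x^k = u$ together with $G(0)=0$; differentiating $G$ then gives $\hat x G'(\hat x)$ in terms of $u$. (2) Verify that $H(\hat y):=\sum_{j\geq 0}(2j+1)\binom{2j}{j}\hat y^j = v^3$, either directly from $v=\sum\binom{2j}{j}\hat y^j$ and $(1+2\hat y\partial_{\hat y})v = v^3$, or by noting $(2j+1)\binom{2j}{j}\hat y^j = \partial_{\hat y}(\hat y^{j+1})\cdot(2j+1)\hat y^{-j}$... more cleanly, $H = (1 + 2\hat y\,\partial_{\hat y})v$. (3) Combine: in $F$, the coefficient of $\hat x^k\hat y^j$ is $\tfrac{(2j+1)k}{2(j+k)(2k-1)}\binom{2j}{j}\binom{2k}{k}$; write $\tfrac{k}{(j+k)(2k-1)} = \tfrac{1}{2k-1}\cdot\tfrac{k}{j+k}$, apply the Euler-operator inverse $(\hat x\partial_{\hat x}+\hat y\partial_{\hat y})^{-1}$ to the product of the $\hat x$-series $\sum \binom{2k}{k}\tfrac{k}{2k-1}\hat x^k$ and the $\hat y$-series $\tfrac12\sum(2j+1)\binom{2j}{j}\hat y^j$, then apply $\hat x\partial_{\hat x}$ to pull back the factor of $k$. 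Using $\hat x\partial_{\hat x}G = \tfrac{u^3-u}{2}\cdot\tfrac{1}{\hat x}\cdot\hat x\cdot(\ldots)$ — explicitly $\hat x\partial_{\hat x}(1-u^{-1}) = \hat x\cdot u^{-2}\cdot\partial_{\hat x}u = u^{-2}\cdot\tfrac{u^3-u}{2} = \tfrac{u-u^{-1}}{2} = \tfrac{u^2-1}{2u}$ — one gets the $\hat x$-part equal to $\tfrac{u^2-1}{2u}$ after accounting for the $\tfrac{1}{2k-1}$ and the Euler inverse. (4) Assemble the pieces: the claimed answer $F = \tfrac{(u^2-1)v^2}{2u(u+v)}$ should drop out, with the denominator $u+v$ arising precisely from the Euler-operator inverse acting on $u\cdot v$-type terms (since $(\hat x\partial_{\hat x}+\hat y\partial_{\hat y})$ has a natural interaction with $u+v$ — one checks $(\hat x\partial_{\hat x}+\hat y\partial_{\hat y})\tfrac{1}{u+v}$ and matches).

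Alternatively — and this is probably cleaner to write up — I would simply \emph{guess} the closed form $F = \tfrac{(u^2-1)v^2}{2u(u+v)}$ (as stated) and verify it by showing both sides satisfy the same first-order linear PDE in $\hat x,\hat y$ with the same initial data. Namely, apply the Euler operator $\T := \hat x\partial_{\hat x} + \hat y\partial_{\hat y}$: from the series, $\T F + (\text{lower-order correction for the } 2k-1, 2j+k \text{ denominators})$ telescopes to a product $\big(\sum\binom{2k}{k}\hat x^k\big)\big(\tfrac12\sum(2j+1)\binom{2j}{j}\hat y^j\big) = \tfrac12 u v^3$ or similar, after clearing the $(2k-1)$ and $(j+k)$ factors by the operators $(2\hat x\partial_{\hat x}-1)$ and $\T$ respectively; then check directly, using $\hat x\partial_{\hat x}u = \tfrac{u^3-u}{2}$ and $\hat y\partial_{\hat y}v = \tfrac{v^3-v}{2}$, that the proposed right-hand side satisfies the same equation, and compare constant terms.

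The main obstacle I anticipate is the bookkeeping around the two distinct denominators $2k-1$ and $j+k$: unlike a single hypergeometric-type denominator, the presence of $j+k$ genuinely entangles the two variables, so one cannot simply factor $F$ as $G(\hat x)H(\hat y)$. The cleanest route is the operator/PDE verification, where $j+k$ is handled uniformly by the Euler operator $\T$ and $2k-1$ by $(2\hat x\partial_{\hat x} - 1)$; the residual check is then a purely algebraic identity in $u$ and $v$ — rational in two variables — which, while tedious, is mechanical. Matching the constant term (both sides vanish at $\hat x = \hat y = 0$, since $F$ has no constant term and $u^2-1 = 4\hat x + O(\hat x^2)$) pins down the solution uniquely.
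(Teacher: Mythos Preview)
Your proposal contains correct building blocks and would eventually succeed, but it is unfocused and, ironically, mentions in passing the very idea the paper uses and then abandons it. The integral representation $\tfrac{1}{j+k}=\int_0^1 t^{j+k}\,\tfrac{dt}{t}$ that you note early on is exactly the paper's route, and your Euler-operator inverse $(\hat x\partial_{\hat x}+\hat y\partial_{\hat y})^{-1}$ is the same thing in operator clothing. Where the paper gains over your sketch is that it writes the integral out explicitly,
\[
F(\hat x,\hat y)=\int_0^1 \hat x\,(1-4\hat x t)^{-1/2}(1-4\hat y t)^{-3/2}\,dt,
\]
using precisely your identities $\sum_{k\ge 1}\tfrac{k}{2k-1}\binom{2k}{k}X^k=2X(1-4X)^{-1/2}$ and $\sum_{j\ge 0}(2j+1)\binom{2j}{j}Y^j=(1-4Y)^{-3/2}$, and then observes that this integrand has the elementary antiderivative
\[
\frac{\hat x}{2(\hat y-\hat x)}\,(1-4\hat x t)^{1/2}(1-4\hat y t)^{-1/2}.
\]
Evaluating at $t=0,1$ and rewriting in terms of $u,v$ finishes the proof in four lines; the factor $u+v$ in the denominator, which you correctly anticipate must come from the Euler inverse, emerges here simply from simplifying $\hat y-\hat x$ in terms of $u,v$.

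Your alternative PDE-verification route (apply $(2\hat x\partial_{\hat x}-1)(\hat x\partial_{\hat x}+\hat y\partial_{\hat y})$ to both sides and check a rational identity in $u,v$) would certainly work and is logically sound --- so there is no genuine gap --- but it trades a one-line antiderivative for a somewhat tedious differential computation. The lesson: once you have written the Euler inverse as an honest $\int_0^1$, look for a closed-form primitive before reaching for operator algebra.
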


\begin{proof}
  \begin{align}
    F(\hat{x}, \hat{y})
      &= \sum_{j \geq 0} \sum_{k \geq 1} \frac{(2j + 1)k}{2(j + k)(2k - 1)} \binom{2j}{j} \binom{2k}{k} \hat{x}^k \hat{y}^j \\
      &= \int_0^1 \hat{x}t (1 - 4\hat{x}t)^{-\frac{1}{2}} (1 - 4\hat{y}t)^{-\frac{3}{2}} \frac{dt}{t} \\
      &= \left[ \frac{\hat{x}}{2(\hat{y} - \hat{x})} (1 - 4\hat{x}t)^{\frac{1}{2}} (1 - 4\hat{y}t)^{-\frac{1}{2}} \right]_{t = 0}^1 \\
      &= \frac{\hat{x}}{2(\hat{y} - \hat{x})} \Big( (1 - 4\hat{x})^{\frac{1}{2}} (1 - 4\hat{y})^{-\frac{1}{2}} - 1 \Big) \\
      &= \frac{(u^2 - 1) v^2}{2u(u + v)}. \qedhere
  \end{align}
\end{proof}

\begin{theorem}
	\label{thm:gzspec}
  If $\Gen_0$ is defined by \eqref{gggdef}, then
  \begin{equation}\label{www}
    \Delta_x \Gen_0 = 2F(\hat{x}, 0) - \Pi_y F(\hat{x}, \hat{y}),
  \end{equation}
  where $F(\hat{x}, \hat{y})$ is as defined in \autoref{lem:fseries}, and this series satisfies the genus zero join-cut equation \eqref{ljcgz} of \autoref{thm:jcgenus}. Therefore, $\Gen_0$ is the generating series for genus zero monotone single Hurwitz numbers.
\end{theorem}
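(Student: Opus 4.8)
\smallskip
\noindent\textit{Proof proposal.}
The plan is to establish the three claims of the theorem in turn: (i) the closed form $\Delta_x\Gen_0 = 2F(\hat x,0) - \Pi_y F(\hat x,\hat y)$; (ii) that this series solves the genus zero join-cut equation~\eqref{ljcgz}; and (iii) the identification of $\Gen_0$ with the genuine genus zero monotone single Hurwitz generating series, whence Theorem~\ref{thm:gzformula} follows by extracting coefficients.

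For (i), I would write $\Delta_x\Gen_0 = \sum_{k\geq 1} k x^k\,\partial\Gen_0/\partial p_k = \sum_{k\geq 1}\frac{k x^k}{p_k}\,\D_k\Gen_0$ and use the change-of-variables identity $x^k/p_k = \hat x^k/q_k$, which is immediate from $x = \hat x(1-\gamma)^2$ and $p_k = q_k(1-\gamma)^{2k}$. Substituting the explicit formula~\eqref{dggg} for $\D_k\Gen_0$ and simplifying $\frac{k}{q_k}\cdot\frac{1}{2k(2k-1)}\binom{2k}{k}q_k = \frac{1}{2(2k-1)}\binom{2k}{k}$ expresses $\Delta_x\Gen_0$ as the difference of $\sum_{k\geq 1}\frac{1}{2(2k-1)}\binom{2k}{k}\hat x^k$ and $\sum_{j,k\geq 1}\frac{k(2j+1)}{2(j+k)(2k-1)}\binom{2j}{j}\binom{2k}{k}\hat x^k q_j$. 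By the definition of $F$ in Lemma~\ref{lem:fseries}, the first sum is precisely $F(\hat x,0)$ (the $j=0$ part of $F$), and, since $\Pi_y = [\hat y^0] + \sum_{j\geq 1}q_j[\hat y^j]$, the second sum is $\Pi_y F(\hat x,\hat y) - F(\hat x,0)$; hence $\Delta_x\Gen_0 = 2F(\hat x,0) - \Pi_y F(\hat x,\hat y)$.

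The substance of the argument is (ii). Here I would substitute the closed form $F(\hat x,\hat y) = \frac{(u^2-1)v^2}{2u(u+v)}$ from Lemma~\ref{lem:fseries}: setting $\hat y = 0$ (so $v = 1$) gives $2F(\hat x,0) = \frac{u-1}{u}$, so that $\Delta_x\Gen_0 = \frac{u-1}{u} - \frac{u^2-1}{2u}\,\Pi_y\big[\tfrac{v^2}{u+v}\big]$. One then computes the three terms of~\eqref{ljcgz} separately: the monomial $x = \hat x(1-\gamma)^2 = \frac{(u^2-1)(1-\gamma)^2}{4u^2}$ (using $\hat x = (u^2-1)/(4u^2)$); the square $(\Delta_x\Gen_0)^2$; and the split term $\Pi_y\Split_{x\to y}(\Delta_x\Gen_0)$, for which one uses $\Split_{x\to y}(f) = \frac{\hat y f(x) - \hat x f(y)}{\hat x - \hat y}$ and tracks the $\hat x$- and $\hat y$-dependence through $u$ and $v$ respectively. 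The projections over $\hat y$ are evaluated using $\Pi_y(v - 1) = \gamma$, $\Pi_y(v^3 - 1) = \eta$, together with the identities of Lemma~\ref{lem:comp} and --- where a projected odd polynomial in $v$ appears --- the fact that $\{\eta_j^v\}_{j\geq 1}$ spans $(v^5 - v^3)\QQ[v^2]$; after clearing denominators the whole of~\eqref{ljcgz} collapses to a rational-function identity in $u$ and $v$ over $R$. I expect this reduction to be the main obstacle: in particular, arranging the two ``hidden'' projections, the explicit $\Pi_y[v^2/(u+v)]$ and the one buried inside the split term, so that they cancel and leave exactly the monomial $x$. The computation is mechanical once the requisite evaluations of $\Pi_y$ on rational functions of $v$ are recorded, but that is where all of the care is needed.

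For (iii): by~\eqref{ljcgz} of Theorem~\ref{thm:jcgenus} there is a unique series with no constant term solving the genus zero join-cut equation, and that series is $\Delta_x$ of the true genus zero monotone single Hurwitz generating series; by (ii) our $\Delta_x\Gen_0$ is also such a series, so the two agree. Since $\Delta_x$ is injective on power series in $p_1,p_2,\ldots$ with no constant term and $\Gen_0$ as defined by~\eqref{gggdef} has none, $\Gen_0$ is the true generating series. Finally, comparing~\eqref{gggdef} with Definition~\ref{def:Gseries} and using $\frac{|\Aut\alpha|\,|\C_\alpha|}{d!} = \big(\prod_{k}\alpha_k\big)^{-1}$ yields the closed formula of Theorem~\ref{thm:gzformula}.
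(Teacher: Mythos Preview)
Your treatment of (i) and (iii) is essentially identical to the paper's: the paper also computes $\Delta_x\Gen_0$ by writing it as $\sum_{k\geq 1}\frac{k\hat x^k}{q_k}\D_k\Gen_0$ and substituting~\eqref{dggg}, and the identification in (iii) is exactly the uniqueness statement of Theorem~\ref{thm:jcgenus} together with the injectivity of $\Delta_x$ on series without constant term.

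For (ii), however, you are missing the device that makes the verification go through. You propose to evaluate the projection $\Pi_y$ on rational functions of $v$ such as $v^2/(u+v)$ and then check a rational-function identity in $u$ and $v$. The difficulty you flag is real, but the resolution is not to record more $\Pi_y$-evaluations: the tools you cite ($\Pi_y(v-1)=\gamma$, $\Pi_y(v^3-1)=\eta$, Lemma~\ref{lem:comp}) apply to \emph{polynomials} in $v$, not to $v^2/(u+v)$, and more seriously the quadratic term $(\Delta_x\Gen_0)^2$ is a product of two $\Pi_y$-projected quantities, not a single one, so it cannot be handled by evaluating one $\Pi_y$ at a time.

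The paper's key idea is to introduce a \emph{third} lifted variable $\hat z$ (with $w=(1-4\hat z)^{-1/2}$) so that every term of~\eqref{ljcgz} can be written as $\Pi_{yz}$ of an explicit rational function of $u,v,w$: for instance $(\Delta_x\Gen_0)^2 = \Pi_{yz}\bigl[(2F(\hat x,0)-F(\hat x,\hat y))(2F(\hat x,0)-F(\hat x,\hat z))\bigr]$, and $x = \hat x(1-\gamma)^2 = \Pi_{yz}\bigl[\tfrac14(1-u^{-2})(2-v)(2-w)\bigr]$. One then obtains $\Pi_{yz}W(u,v,w)$ for an explicit $W$. Crucially, $W$ itself is \emph{not} zero; but since $\Pi_{yz}$ is symmetric in $y$ and $z$, it suffices to check that the symmetrization $\tfrac12\bigl(W(u,v,w)+W(u,w,v)\bigr)$ vanishes, which is now a pure rational-function identity. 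This symmetrization step is what collapses the two ``hidden'' projections you identified; without it the computation does not close.
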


\begin{proof}
  By \eqref{dggg}, we have
  \begin{align}
    \Delta_x \Gen_0
      &= \sum_{k \geq 1} \frac{k\hat{x}^k}{q_k} \D_k \Gen_0 \\
      &= \sum_{k \geq 1} \frac{1}{2(2k - 1)} \binom{2k}{k} \hat{x}^k - \sum_{j \geq 1} \sum_{k \geq 1} \frac{(2j + 1)k}{2(j + k)(2k - 1)} \binom{2j}{j} \binom{2k}{k} 
      q_j \hat{x}^k \\
      &= 2F(\hat{x}, 0) - \Pi_y F(\hat{x}, \hat{y}).
  \end{align}

  To verify that $\Delta_x \Gen_0$ satisfies equation \eqref{ljcgz}, we need to check that the expression
  \begin{equation}
    \Delta_x \Gen_0 - \Pi_y \Split_{x \to y} \Delta_x \Gen_0 - (\Delta_x \Gen_0)^2 - x
  \end{equation}
  is zero. We can rewrite each of these terms as
  \begin{align}
    \Delta_x \Gen_0 &= \Pi_{yz} \Big( 2F(\hat{x}, 0) - F(\hat{x}, \hat{y}) \Big), \\
    \Pi_y \Split_{x \to y} \Delta_x \Gen_0 &= \Pi_{yz} \left( \frac{\hat{y}\big( 2F(\hat{x}, 0) - F(\hat{x}, \hat{z}) \big) - \hat{x}\big( 2F(\hat{y}, 0) - F(\hat{y}, \hat{z}) \big)}{\hat{x} - \hat{y}} \right), \\
    (\Delta_x \Gen_0)^2 &= \Pi_{yz} \Big( \big( 2F(\hat{x}, 0) - F(\hat{x}, \hat{y}) \big) \big( 2F(\hat{x}, 0) - F(\hat{x}, \hat{z}) \big) \Big), \\
    x &= \hat{x} (1 - \gamma)^2 = \Pi_{yz} \Big( \tfrac{1}{4}(1 - u^{-2}) (2 - v) (2 - w) \Big),
  \end{align}
  to get an expression of the form
  \begin{equation}\Pi_{yz} W(u, v, w),\end{equation}
  where $W(u, v, w)$ is a rational function of $u, v, w$. This rational function itself is not zero, but a straightforward computation shows that its symmetrization with respect to $y$ and $z$, that is, $\tfrac{1}{2} \big( W(u, v, w) + W(u, w, v) \big)$, is zero. Thus,
  \begin{equation}\Pi_{yz} W(u, v, w) = \Pi_{yz} \tfrac{1}{2} \big( W(u, v, w) + W(u, w, v) \big) = 0,\end{equation}
  which completes the verification.
\end{proof}

\noindent
{\em Proof of Theorem \ref{thm:gzformula}}.
The result follows immediately from Theorem \ref{thm:gzspec} and \eqref{gggdef}.$\hfill\Box$

\medskip

To compute the right-hand side of the join-cut equation \eqref{ljchg} for genus one, we also need the following corollary.

\begin{corollary}\label{cor:dwww}
  \begin{equation}
    \Delta_x^2 \Gen_0 = \tfrac{1}{16} (u^2 - 1)^2.
  \end{equation}
\end{corollary}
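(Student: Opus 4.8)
The plan is to compute $\Delta_x^2 \Gen_0$ directly from the closed form for $\Delta_x \Gen_0$ obtained in Theorem \ref{thm:gzspec}, namely $\Delta_x \Gen_0 = 2F(\hat{x},0) - \Pi_y F(\hat{x},\hat{y})$ with $F$ as in Lemma \ref{lem:fseries}. First I would recall from Lemma \ref{lem:fseries} that $F(\hat{x},0) = \tfrac{1}{2}(u^2-1)$ (set $v=1$, i.e. $\hat{y}=0$), so that $2F(\hat{x},0) = u^2 - 1$ and $\Delta_x(2F(\hat{x},0)) = 2(u^3-u)\,\Delta_x(u) = \tfrac{(u^3-u)(u^3-u)^2}{1-\eta}\cdot\text{(appropriate factor)}$ using the formula $\Delta_x(u) = \tfrac{(u^3-u)^2}{2(1-\eta)}$ from Lemma \ref{lem:comp}. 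More importantly, since $\Delta_x$ is $\QQ[[x,y,z,p_1,p_2,\dots]]$-linear and kills $x$, applying $\Delta_x$ to the whole expression and then observing what happens is the main computation.

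The cleanest route is to use the ideas already present in the proof of Theorem \ref{thm:gzspec}. There $\Delta_x \Gen_0$ was rewritten as $\Pi_{yz}(2F(\hat{x},0) - F(\hat{x},\hat{y}))$, and $\Delta_x$ was computed on various rational functions of $u,v,w$ via Lemma \ref{lem:comp}. I would apply $\Delta_x$ to $\Delta_x \Gen_0 = 2F(\hat{x},0) - \Pi_y F(\hat{x},\hat{y})$. Using that $\Delta_x$ commutes with $\Pi_y$ only up to the correction term in \eqref{deltapi}, one gets $\Delta_x \Pi_y F(\hat x, \hat y) = [\,y\diff{y}F(\hat x,\hat y)\,]_{y=x} + \Pi_y \Delta_x F(\hat x,\hat y)$. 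The first term evaluates $\hat y \partial_{\hat y}$ of $F$ at $\hat y = \hat x$ (equivalently $v = u$), and $\Delta_x F(\hat x,\hat y)$ is computed by the chain rule from $\Delta_x(u)$ and $\Delta_x(v)$ in Lemma \ref{lem:comp}. After assembling these pieces, all the $(1-\eta)$ denominators should cancel against each other (just as the symmetrization in the proof of Theorem \ref{thm:gzspec} produced a clean result), leaving a polynomial in $u$ alone, which should simplify to $\tfrac{1}{16}(u^2-1)^2$.

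An alternative, possibly shorter, derivation: differentiate the genus zero join-cut equation \eqref{ljcgz} itself. Applying $\Delta_x$ to $\Delta_x \Gen_0 = \Pi_y \Split_{x\to y}\Delta_x\Gen_0 + (\Delta_x\Gen_0)^2 + x$ expresses $\Delta_x^2\Gen_0$ in terms of quantities already known in closed form, and one can solve for it. Either way, the concrete verification reduces to a rational-function identity in $u$ (and transiently in $v$), which can be checked mechanically.

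The main obstacle I anticipate is purely bookkeeping: correctly tracking the $\Pi_y$ versus $\Delta_x$ noncommutativity via \eqref{deltapi}, and making sure that the $v$-dependent and $(1-\eta)$-dependent terms genuinely cancel rather than merely appearing to. The symmetrization trick used in the proof of Theorem \ref{thm:gzspec} is a good sanity check here, and I would expect that after the dust settles the $(1-\eta)$'s disappear because $\Delta_x^2\Gen_0$ is, on combinatorial grounds (it is $\sum_{i,j}ij\,x^{i+j}\partial_{p_i}\partial_{p_j}\Gen_0$ with $\Gen_0$ having genus zero structure), forced to lie in $\QQ[u]$ — indeed in the one-dimensional space spanned by $(u^2-1)^2$ once one checks degrees and the leading term. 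The final identification of the constant $\tfrac{1}{16}$ is then a single coefficient comparison, e.g. extracting $[\hat x^2]$ on both sides, using $u = 1 + 2\hat x + 6\hat x^2 + \cdots$.
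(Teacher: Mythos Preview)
Your plan --- apply $\Delta_x$ to \eqref{www} using the commutation identity \eqref{deltapi} and the derivative formulas in Lemma~\ref{lem:comp}, then simplify --- is exactly the paper's (one-line) proof. One small slip to fix before you carry it out: setting $v=1$ in Lemma~\ref{lem:fseries} gives $F(\hat{x},0)=\dfrac{u-1}{2u}$ (equivalently $2F(\hat{x},0)=1-u^{-1}$), not $\tfrac{1}{2}(u^2-1)$; with this correction the computation goes through cleanly.
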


\begin{proof}
  This follows from \eqref{www} by applying the identity \eqref{deltapi} and simplifying.
\end{proof}

\section{Higher genera}\label{sec:higher-genus}


In this section, we prove Theorem \ref{thm:goseries}, giving expressions for $\Mon_g$ for $g \geq 1$. We do so by solving the higher genus join-cut equation \eqref{ljchg} of \autoref{thm:jcgenus} and keeping track of the form of the solution. This gives an expression for the generating series $\Delta_x \Gen_g$, which we then integrate to obtain $\Gen_g$, or equivalently, $\Mon_g$. To establish the value of the constant of integration which must be used, we use a formula of \cite{MN} for the one-part case of the monotone single Hurwitz numbers.



To establish degree bounds on the solutions, we will need a notion of degree on the spaces $Q$ and $(u^3 - u)Q[u^2]$. Recall from \autoref{sec:change} that $Q = \QQ[(1 - \eta)^{-1}, \eta_1 (1 - \eta)^{-1}, \eta_2 (1 - \eta)^{-1}, \ldots]$ and $(u^3 - u)Q[u^2]$ has basis over $Q$ given by $(u^3 - u), \eta_1^u, \eta_2^u, \ldots$.
\begin{definition}
  For an element
  \begin{equation}A = \sum_{\substack{\alpha \vdash d \geq 0 \\ j \geq 0}} a_{j,\alpha} \frac{\eta_\alpha}{(1 - \eta)^{j+\ell}} \in Q,\end{equation}
  its \emph{weight} is
  \begin{equation}\nu(A) = \max\{ d \colon a_{j,\alpha} \neq 0 \}.\end{equation}
  For an element
  \begin{equation}B = \sum_{\substack{\alpha \vdash d \geq 0 \\ j \geq 0}} \left( b_{j,\alpha,0} (u^3 - u) + \sum_{k \geq 1} b_{j,\alpha,k} \eta_k^u \right) \frac{\eta_\alpha}{(1 - \eta)^{j+\ell}} \in (u^3 - u)Q[u^2],\end{equation}
  its \emph{weight} is
  \begin{equation}\nu(B) = \max\{ d + k \colon b_{j,\alpha,k} \neq 0 \}.\end{equation}
\end{definition}
In particular, note that
\begin{equation}\nu\big(\eta_k (1 - \eta)^{-1}\big) = \nu(\eta_k^u) = \nu(u^{2k+3} - u^{2k+1}) = k\end{equation}
for $k \geq 1$, and
\begin{equation}\nu\big((1 - \eta)^{-1}\big) = \nu(u^3 - u) = 0,\end{equation}
so that the weight of a polynomial in $u$ can be determined from the weights of the coefficients of $u^3, u^5, u^7, \ldots$.

When solving the higher genus equation, we will also need the $R$-linear operator $\T$ defined by
\begin{align}
  \T \colon (u^3 - u) R[u^2] &\to (u^3 - u) R[u^2] \\
  (u^3 - u) f(u^2) &\mapsto \frac{u^3 - u}{1 - \eta} \Pi_y \frac{(v^5 - v^3) (f(u^2) - f(v^2))}{u^2 - v^2}.
\end{align}

\begin{lemma}\label{lem:tprop}
  The operator $\T$ is locally nilpotent, meaning that for every $(u^3 - u) f(u^2) \in (u^3 - u) R[u^2]$, there is some $n \geq 0$ with $\T^n \big( (u^3 - u) f(u^2) \big) = 0$. Furthermore, for each $j \geq 0$, the operator $\T$ restricts to a function
  \begin{equation}\T \colon (u^3 - u) (1 - \eta)^{-j} P[u^2] \to (u^3 - u) (1 - \eta)^{-j} P[u^2],\end{equation}
  and on these subspaces, $\nu\Big(\T\big((u^3 - u) f(u^2)\big)\Big) \leq \nu\big((u^3 - u) f(u^2)\big)$.
\end{lemma}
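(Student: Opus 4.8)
The plan is to analyze the operator $\T$ by tracking what it does to the monomial basis $\{u^3 - u\} \cup \{\eta_j^u\}_{j \geq 1}$ of $(u^3 - u)R[u^2]$ over $R$ (recall that $\eta_j^u$ is an odd polynomial in $u$ of degree $2j+3$ divisible by $u^5 - u^3$, so together with $u^3 - u$ these span $(u^3-u)R[u^2]$). First I would compute $\T\big((u^3-u) f(u^2)\big)$ explicitly when $f(u^2) = u^{2k}$, i.e.\ apply $\T$ to $u^{2k+3} - u^{2k+1} \in (u^3 - u)R[u^2]$. By definition this is $\frac{u^3-u}{1-\eta}\,\Pi_y \frac{(v^5-v^3)(u^{2k}-v^{2k})}{u^2-v^2}$. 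Since $\frac{u^{2k}-v^{2k}}{u^2-v^2} = \sum_{i=0}^{k-1} u^{2i} v^{2(k-1-i)}$, the inner expression is $\sum_{i=0}^{k-1} u^{2i}\, (v^{2(k-i)+3} - v^{2(k-i)+1})$, and applying $\Pi_y$ to the $v$-part turns $v^{2m+3}-v^{2m+1}$ into something expressible via the $\eta$-series (for $m \geq 1$ it gives a combination of $\eta, \eta_1, \dots$; the $v^3$ and $v$ terms project to $\eta$ and $\gamma$ respectively — one should check which terms actually occur). The upshot is a formula $\T(u^{2k+3}-u^{2k+1}) = \frac{u^3-u}{1-\eta}\sum_{i=0}^{k-1} u^{2i}\,(\text{projected }v\text{-terms})$, from which both claims can be read off.

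\textbf{Local nilpotence.} The key observation is that $\T$ strictly decreases the degree in $u$: applying $\T$ to $u^{2k+3}-u^{2k+1}$ produces terms $u^{2i+3}-u^{2i+1}$ with $i \leq k-1$, multiplied by coefficients in $R$ (namely $(1-\eta)^{-1}$ times projected quantities like $\eta_m$, which lie in $R$ and have no constant term). So on the finite-dimensional-over-$R$ span of $\{u^3-u, \eta_1^u, \dots, \eta_n^u\}$ — equivalently the span of $\{u^{2j+3}-u^{2j+1} : 0 \leq j \leq n\}$ — the operator $\T$ is strictly lower-triangular (it kills $u^3-u$ outright, since the inner sum is empty when $k=0$). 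Any given $(u^3-u)f(u^2)$ lies in such a span for $n = \deg f$, so $\T^{n+1}$ annihilates it. That gives local nilpotence.

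\textbf{Subspace preservation and the weight bound.} For the second claim, I note that the projected $v$-terms appearing in the formula are, up to rational constants, the series $\gamma, \eta, \eta_1, \eta_2, \dots$; modulo $1-\eta$ one has $\gamma = 1 - (1-\gamma)$ and the relevant combination simplifies so that each coefficient of $u^{2i+3}-u^{2i+1}$ after dividing by $1-\eta$ lands in $P = \QQ[\eta_1(1-\eta)^{-1}, \eta_2(1-\eta)^{-1},\dots]$. (Here one must be slightly careful about whether an isolated $\eta(1-\eta)^{-1}$ or $(1-\eta)^{-1}$ appears; using $\eta(1-\eta)^{-1} = (1-\eta)^{-1} - 1$ and checking that the $(1-\eta)^{-1}$ pieces cancel against the factor already present — this is the one bookkeeping step that needs genuine care.) Granting this, $\T$ maps $(u^3-u)(1-\eta)^{-j}P[u^2]$ into itself, because multiplying by an element of $P$ preserves $(1-\eta)^{-j}P$ and the polynomial degree in $u^2$ can only go down. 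Finally, for the weight bound: on the basis element $(u^3-u)(1-\eta)^{-j}\,p\cdot u^{2k}$ with $p \in P$ of weight $\nu(p)$ (so the element has weight $\nu(p)+k$), applying $\T$ replaces $u^{2k}$ by $u^{2i}$ with $i \leq k-1$ and multiplies $p$ by one of $\eta_1(1-\eta)^{-1}, \dots$ coming from the $v^{2(k-i)+3}$-term, whose weight is exactly $k-i$; the new weight is $\nu(p) + (k-i) + i = \nu(p)+k$, so the weight does not increase (it stays equal for the top term and drops for the others). This proves $\nu\big(\T((u^3-u)f(u^2))\big) \leq \nu\big((u^3-u)f(u^2)\big)$.

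\textbf{Main obstacle.} The only real difficulty is the explicit identification of $\Pi_y(v^{2m+3} - v^{2m+1})$ in terms of $\gamma, \eta, \eta_1, \dots$ and the subsequent verification that, after the division by $1-\eta$, all the ``$P$-spoiling'' pieces (isolated powers of $(1-\eta)^{-1}$ not multiplied by some $\eta_k$) cancel so that the image genuinely lies in $(u^3-u)(1-\eta)^{-j}P[u^2]$ rather than the larger space $(u^3-u)(1-\eta)^{-j-1}Q[u^2]$. This is where the precise definitions of $\gamma$, $\eta$, and the decomposition $Q = \bigoplus_{i \geq 0}(1-\eta)^{-i}P$ must be used carefully; everything else is routine triangularity and degree counting.
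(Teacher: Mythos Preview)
Your approach is essentially the paper's: reduce to monomials $u^{2k+3}-u^{2k+1}$, observe that $\T$ strictly lowers the $u$-degree (hence local nilpotence), and read off the subspace and weight claims from the explicit form of $\T(u^{2k+3}-u^{2k+1})$.

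The ``main obstacle'' you flag, however, is not an obstacle at all, and your own computation already disposes of it. You wrote the inner expression as
\[
\sum_{i=0}^{k-1} u^{2i}\,\bigl(v^{2(k-i)+3} - v^{2(k-i)+1}\bigr),
\]
and since $i \leq k-1$ the exponent $m = k-i$ is always $\geq 1$. Thus each $v$-factor $v^{2m+3}-v^{2m+1}$ lies in $(v^5-v^3)\QQ[v^2]$, which the paper has noted is exactly the $\QQ$-span of $\eta_1^v, \eta_2^v, \ldots$ (not $\eta_0^v = v^3-1$, and certainly not $v-1$). Applying $\Pi_y$ therefore yields a $\QQ$-linear combination of $\eta_1, \ldots, \eta_{k-i}$ only; no $\gamma$ or $\eta = \eta_0$ term ever appears, and after multiplying by $(u^3-u)(1-\eta)^{-1}$ every coefficient is already of the form $\eta_j(1-\eta)^{-1} \in P$ with $1 \leq j \leq k-i$. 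There is no cancellation to check. With this observation your weight count $i + j \leq i + (k-i) = k$ goes through immediately, and the proof is complete.
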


\begin{proof}
  The operator $\T$ strictly reduces the degree in $u$ of every nonzero element of $(u^3 - u) R[u^2]$, so repeated application of $\T$ will always eventually produce zero.
  
  Now, suppose that $(u^3 - u) f(u^2) \in (u^3 - u) \QQ[u^2]$, where $f(u^2)$ has degree $2k$ in $u$, so that $\nu\big((u^3 - u) f(u^2)\big) = k$. The expression $(v^5 - v^3) (f(u^2) - f(v^2)) / u^2 - v^2$ can be written as a polynomial in $u$, where the coefficient of $u^{2i}$ is a polynomial in $(v^5 - v^3) (1 - \eta)^{-j} \QQ[v^2]$ of degree at most $2k - 2i + 3$ in $v$. Then, as noted in \autoref{sec:change}, the coefficient of $u^{2i}$ is a linear combination of $\eta_1^v, \eta_2^v, \ldots, \eta_{k-i}^v$ with coefficients in $\QQ$. Applying $\Pi_y$ and multiplying by $(u^3 - u) (1 - \eta)^{-1}$ gives a linear combination of terms of the form $(u^{2i+3} - u^{2i+1}) \eta_j (1 - \eta)^{-1}$ for $j = 1, 2, \ldots, k - i$, which lie in $(u^3 - u) P[u^2]$ and have weight at most $k$.
  
  From this, the result follows by $Q$-linearity.
\end{proof}

\begin{theorem}\label{thm:wwwform}
  For $g \geq 1$,
  \begin{equation}
    \Delta_x \Gen_g \in (u^3 - u) (1 - \eta)^{1 - 2g} P[u^2]
  \end{equation}
  and $\nu(\Delta_x \Gen_g) \leq 3g - 2$. In particular,
  \begin{equation}
    \Delta_x \Gen_1 = \frac{u^5 - 2u^3 + u}{16(1 - \eta)} + \frac{(u^3 - u) \eta_1}{24(1 - \eta)^2}
  \end{equation}
\end{theorem}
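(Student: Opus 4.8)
The plan is to prove the membership statement and the weight bound $\nu(\Delta_x\Gen_g)\le 3g-2$ together, by strong induction on $g\ge 1$, by solving the genus-$g$ join--cut equation \eqref{ljchg} of \autoref{thm:jcgenus} for $\Delta_x\Gen_g$. Write $\mathcal{L}$ for the operator $1 - 2\,\Delta_x\Gen_0 - \Pi_y\Split_{x\to y}$ appearing on the left of \eqref{ljchg}, so that the equation reads
\[
  \mathcal{L}\,\Delta_x\Gen_g \;=\; \Delta_x^2\Gen_{g-1} \;+\; \sum_{g'=1}^{g-1}\Delta_x\Gen_{g'}\,\Delta_x\Gen_{g-g'} \;=:\; R_g ,
\]
and recall that by \autoref{thm:jcgenus} this equation has a unique solution, so it suffices to exhibit a solution of the asserted form.

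For the base case $g=1$ the sum over $g'$ is empty and $R_1 = \Delta_x^2\Gen_0 = \tfrac1{16}(u^2-1)^2$ by \autoref{cor:dwww}. I would then verify directly that the proposed expression $\frac{u^5-2u^3+u}{16(1-\eta)}+\frac{(u^3-u)\eta_1}{24(1-\eta)^2}$ satisfies $\mathcal{L}\,\Delta_x\Gen_1 = \tfrac1{16}(u^2-1)^2$ --- a finite computation in the hatted variables using \autoref{lem:comp} and \autoref{lem:fseries} to evaluate $\Pi_y\Split_{x\to y}$ and the closed form $\Delta_x\Gen_0 = 2F(\hat x,0)-\Pi_y F(\hat x,\hat y)$ of \autoref{thm:gzspec}. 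The membership $\Delta_x\Gen_1\in(u^3-u)(1-\eta)^{-1}P[u^2]$ and the bound $\nu(\Delta_x\Gen_1)\le 1$ then follow from the formula, using $u^5-2u^3+u = (u^5-u^3)-(u^3-u)$ to see that the first summand is $(u^3-u)(1-\eta)^{-1}$ times an element of $\QQ[u^2]\subseteq P[u^2]$.

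For the inductive step, let $g\ge 2$ and assume the theorem for all $1\le g'<g$. The right-hand side $R_g$ is controlled directly by \autoref{cor:deltasubspace}: since $\Delta_x$ maps $(u^3-u)^i(1-\eta)^{-j}P[u^2]$ into $(u^3-u)^{i+1}(1-\eta)^{-j-1}P[u^2]$, applying it twice to $\Gen_{g-1}$ puts $\Delta_x^2\Gen_{g-1}$ in $(u^3-u)^2(1-\eta)^{2-2g}P[u^2]$, and because $P[u^2]$ is a ring the same holds for each product $\Delta_x\Gen_{g'}\,\Delta_x\Gen_{g-g'}$. For weights, \autoref{lem:comp} shows $\nu(\Delta_x A)\le\nu(A)+1$ and $\nu$ is subadditive under multiplication, so $\nu(R_g)\le 3g-4$.

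The remaining task --- inverting $\mathcal{L}$ on these filtered pieces, with control of $\nu$ --- is the heart of the matter. The plan is to split $\mathcal{L}$ into a part that is local in $\hat x$, coming from $1-2\Delta_x\Gen_0$ (after substituting $\Delta_x\Gen_0 = 2F(\hat x,0)-\Pi_y F(\hat x,\hat y)$) together with the ``non-residue'' part of $\Pi_y\Split_{x\to y}$, and a complementary part; then to show, using the genus-zero identity \eqref{ljcgz} and \autoref{lem:comp}, that the local part is multiplication by an explicit element of $Q[u]$ that is a unit on the relevant spaces, and that the complementary part is the operator $\T$ of \autoref{lem:tprop} up to this multiplication. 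Local nilpotence of $\T$ then makes $\mathcal{L}$ invertible, with $\mathcal{L}^{-1}$ assembled from $\sum_{n\ge0}\T^n$ and the multiplication operator; since $\T$ preserves each $(u^3-u)(1-\eta)^{-j}P[u^2]$ and does not increase $\nu$ (again \autoref{lem:tprop}), applying $\mathcal{L}^{-1}$ to $R_g$ keeps us in $(u^3-u)(1-\eta)^{1-2g}P[u^2]$, and carefully tracking the weight through the multiplication factor yields $\nu(\Delta_x\Gen_g)\le 3g-2$. I expect the genuinely delicate step to be exactly this separation of $\mathcal{L}$ into a unit ``diagonal'' piece and the nilpotent $\T$, together with the verification that the diagonal piece behaves as claimed on both the $(u^3-u)$/$(1-\eta)$-filtration and on $\nu$; the rest is a routine propagation of the inductive hypothesis.
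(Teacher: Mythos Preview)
Your plan is exactly the paper's: rewrite $\mathcal{L}$ on $(u^3-u)R[u^2]$ as a ``diagonal'' unit times $(1-\T)$, invert via the geometric series in the locally nilpotent $\T$, and propagate membership and the weight bound by induction using \autoref{cor:deltasubspace} and \autoref{lem:tprop}. The paper makes the diagonal piece explicit,
\[
\mathcal{L}\bigl((u^3-u)f(u^2)\bigr)=\frac{1-\eta}{u}\,(1-\T)\bigl((u^3-u)f(u^2)\bigr),
\]
so that $\Delta_x\Gen_g=\frac{1}{1-\eta}\sum_{n\ge 0}\T^n\bigl(u\,R_g\bigr)$; with this in hand the membership statement is immediate and the base case $g=1$ is a direct computation rather than a verification.

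The one genuine slip is in your weight bookkeeping. As defined, $\nu$ lives only on $Q$ and on $(u^3-u)Q[u^2]$; but $R_g$ and $\Delta_x A$ (for $A\in(u^3-u)Q[u^2]$) are \emph{even} in $u$, so the assertions ``$\nu(\Delta_x A)\le\nu(A)+1$'', ``$\nu$ is subadditive for products of such $A$'', and ``$\nu(R_g)\le 3g-4$'' are not well-posed, and the first is in any case numerically wrong: $\Delta_x$ raises the $u$-degree by $5$ (since $\Delta_x u=\tfrac{(u^3-u)^2}{2(1-\eta)}$), not by $2$. The clean way, which the paper uses, is to absorb the stray factor of $u$ coming from the diagonal piece and track instead
\[
\nu\bigl(u\,\Delta_x A\bigr)\le \nu(A)+3,\qquad \nu\bigl(u\,AB\bigr)\le \nu(A)+\nu(B)+2
\]
for $A,B\in(u^3-u)Q[u^2]$; these give $\nu\bigl(\tfrac{u}{1-\eta}R_g\bigr)\le 3g-2$ directly, after which $\T$ does not increase $\nu$ and the bound follows. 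Once you correct this, your argument coincides with the paper's.
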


\begin{proof}
  Recall that the higher genus join-cut equation \eqref{ljchg} is
  \begin{equation}
    \left( 1 - 2 \Delta_x \Gen_0 - \Split_{x \to y} \right) \Delta_x \Gen_g = \Delta_x^2 \Gen_{g-1} + \sum_{g'=1}^{g-1} \Delta_x \Gen_{g'} \, \Delta_x \Gen_{g-g'}.
  \end{equation}
  Note that the left-hand side operator is $R$-linear. Using \eqref{www} with \autoref{lem:fseries} and expressing $\Split_{x \to y}$ in terms of $u$ and $v$, we have
  \begin{multline}
    \left( 1 - 2 \Delta_x \Gen_0 - \Split_{x \to y} \right)\big( (u^3 - u) f(u^2) \big) \\
    \begin{aligned}
      &= \Pi_y\left( (2 - v^3) (u^2 - 1) f(u^2) - \frac{(u^2 - 1)(v^5 - v^3)(f(u^2) - f(v^2))}{u^2 - v^2} \right) \\
      &= \frac{(1 - \eta)}{u} (1 - \T) \big( (u^3 - u) f(u^2) \big)
    \end{aligned}
  \end{multline}
  for $f(u^2) \in R[u^2]$. Since $\T$ is locally nilpotent, it follows that this operator is invertible on $(u^2 - 1) R[u^2]$, with inverse given by
  \begin{multline}
    \left( 1 - 2 \Delta_x \Gen_0 - \Split_{x \to y} \right)^{-1}\big( (u^2 - 1) f(u^2) \big) \\
    = (1 + \T + \T^2 + \cdots) \left( \frac{(u^3 - u) f(u^2)}{1 - \eta} \right).
  \end{multline}
  Thus, we have
  \begin{equation}
    \Delta_x \Gen_g = \frac{1}{1 - \eta} (1 + \T + \T^2 + \cdots)\left( u\Delta_x^2 \Gen_{g-1} + \sum_{g'=1}^{g-1} u\Delta_x \Gen_{g'} \, \Delta_x \Gen_{g-g'} \right).
  \end{equation}
  
  Then, using \autoref{cor:dwww} for the value of $\Delta_x^2 \Gen_0$, we can compute $\Delta_x \Gen_1$ directly. Using and \autoref{cor:deltasubspace} and \autoref{lem:tprop}, it follows that $\Delta_x \Gen_g \in (u^3 - u) (1 - \eta)^{1 - 2g} P[u^2]$ by induction on $g$.
  
  Using \autoref{lem:comp}, a straightforward computation shows that for $A, B \in (u^3 - u) Q[u^2]$, we have
  \begin{equation}\nu\big(u \Delta(A)\big) \leq \nu(A) + 3, \qquad \nu(u A B) \leq \nu(A) + \nu(B) + 2,\end{equation}
  and since $\T$ does not increase weights, the bound $\nu(\Delta_x \Gen_g) \leq 3g - 2$ also follows by induction on $g$.
\end{proof}

Having solved for $\Delta_x \Gen_g = \Delta_x \Gen_g$, we now need to solve for $\Gen_g$ and show that it has the right form. To do so, we first compute $\sum_{k \geq 1} \E_k \Gen_g$ from $\Delta_x \Gen_g$, and then invert the operator $\sum_{k \geq 1} \E_k$.

\begin{theorem}\label{thm:egggform}
  We have
  \begin{equation}\sum_{k \geq 1} \E_k \Gen_1 = \frac{\eta}{24(1 - \eta)} + \frac{\gamma}{8(1 - \gamma)},\end{equation}
  and for $g \geq 2$,
  \begin{equation}\sum_{k \geq 1} \E_k \Gen_g \in (1 - \eta)^{-2} Q.\end{equation}
\end{theorem}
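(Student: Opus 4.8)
The plan is to read off $\sum_{k\ge1}\E_k\Gen_g$ from the series $\Delta_x\Gen_g$ of \autoref{thm:wwwform}, passing between the operators $\D_k,\DD$ and $\E_k,\EE$ by means of \eqref{etod}. Summing \eqref{etod} over $k$ yields
\begin{equation*}
  \sum_{k\ge1}\E_k \;=\; \sum_{k\ge1}\D_k \;-\; \frac{2\gamma}{1-\gamma}\,\DD ,
\end{equation*}
so it suffices to evaluate $\DD\Gen_g$ and $\sum_{k\ge1}\D_k\Gen_g$. Because $\Delta_x\Gen_g=\sum_{k\ge1}\frac{k\hat{x}^k}{q_k}\D_k\Gen_g$, one has $\DD\Gen_g=\Pi_x\Delta_x\Gen_g$ at once; and if one divides the coefficient of $\hat{x}^k$ in $\Delta_x\Gen_g$ by $k$ — the operation $G(\hat{x})\mapsto\int_0^{\hat{x}}G(t)\,\tfrac{dt}{t}$ — and then applies $\Pi_x$, the result is $\sum_{k\ge1}\D_k\Gen_g$. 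This division is elementary in the variable $u$: since $u^3-u=2\hat{x}\,\tfrac{du}{d\hat{x}}$, it sends $u^{2i+3}-u^{2i+1}$ to $\tfrac{2}{2i+1}(u^{2i+1}-1)$, so on $(u^3-u)(1-\eta)^{1-2g}P[u^2]$ it again returns a polynomial in $u$, and $\Pi_x$ then turns every ingredient into a combination of $\gamma,\eta$ and the $\eta_j$ via $\Pi_x(u^3-1)=\eta$, $\Pi_x(u-1)=\gamma$, $\Pi_x(\eta_j^u)=\eta_j$.

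For $g=1$ I would substitute $\Delta_x\Gen_1=\tfrac{u^5-2u^3+u}{16(1-\eta)}+\tfrac{(u^3-u)\eta_1}{24(1-\eta)^2}$ into the two projections above, obtaining an explicit rational expression in $\gamma,\eta,\eta_1$; after clearing denominators the $\eta_1$-terms cancel by $(1-\eta)+(\eta-\gamma)=1-\gamma$, leaving the stated closed form.

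For $g\ge2$ the argument is structural. By \autoref{thm:wwwform} we may write $\Delta_x\Gen_g=\sum_{i\ge0}c_i\bigl(u^{2i+3}-u^{2i+1}\bigr)$ with $c_i\in(1-\eta)^{1-2g}P$. The two projections give $\DD\Gen_g=c_0(\eta-\gamma)+D$ and $\sum_{k}\D_k\Gen_g=2c_0\gamma+E$, where $D$ is a $\QQ$-linear combination of the $c_i$ times $\eta_1,\eta_2,\dots$ only (each $u^{2i+3}-u^{2i+1}$ with $i\ge1$ being divisible by $u^5-u^3$), while $E$ involves also $\eta=\eta_0$. Feeding this into the operator identity and simplifying, the $\gamma$-dependent part of $\sum_k\E_k\Gen_g$ collapses to $\tfrac{2\gamma}{1-\gamma}\bigl(c_0(1-\eta)-D\bigr)$; hence $\sum_k\E_k\Gen_g$ is $\gamma$-free, and equals $E$, exactly when $\DD\Gen_g=(1-\gamma)c_0$. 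Given that, membership in $(1-\eta)^{-2}Q$ follows immediately, since $c_i\in(1-\eta)^{1-2g}P$ with $1-2g\le-3$ and $\eta_j\cdot P\subseteq(1-\eta)P$, so every term of $E$ lies in $(1-\eta)^{-2}Q$. The main obstacle is thus the identity $\DD\Gen_g=(1-\gamma)c_0$, equivalently $\EE\Gen_g\in(1-\eta)^{2-2g}P$: it is \emph{false} for $g=1$ (there $\DD\Gen_1=(1-\gamma)c_0+\tfrac1{16}$, and that extra constant is precisely what produces the $\gamma/(1-\gamma)$ contribution to $\sum_k\E_k\Gen_1$), so for $g\ge2$ it must be extracted not from \autoref{thm:wwwform} alone but from the finer structure of the join-cut recursion \eqref{ljchg} — most naturally by running the induction of \autoref{thm:wwwform} through $\Pi_x$ and tracking how the coefficient $c_0$ governs $\Pi_x\Delta_x\Gen_g$.
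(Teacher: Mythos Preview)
Your reduction is correct and is exactly how the paper begins: both of you pass from $\sum_k\E_k$ to $\sum_k\D_k-\tfrac{2\gamma}{1-\gamma}\DD$, read off $\DD\Gen_g=\Pi_x\Delta_x\Gen_g$ and $\sum_k\D_k\Gen_g=\Pi_x(x\partial_x)^{-1}\Delta_x\Gen_g$ from \autoref{thm:wwwform}, and are then left with a single $\gamma$-dependent obstruction. You have also isolated that obstruction precisely: the $\gamma$-terms cancel if and only if $\DD\Gen_g=(1-\gamma)c_0$, where $c_0=-[u^1]\Delta_x\Gen_g$, and you correctly observe that for $g=1$ the discrepancy is the constant $\tfrac{1}{16}$, which produces the $\gamma/(1-\gamma)$ term.

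The gap is that you do not prove the identity $\DD\Gen_g=(1-\gamma)c_0$ for $g\ge2$; you only gesture at ``running the induction of \autoref{thm:wwwform} through $\Pi_x$''. That is not enough, and in fact a separate induction is unnecessary. The paper closes the gap in one stroke via the operator identity
\[
  \Pi_x \;=\; [u^0]\bigl(1-2\Delta_x\Gen_0-\Pi_y\Split_{x\to y}\bigr)\;-\;(1-\gamma)\,[u^1]
\]
on $(u^3-u)Q[u^2]$, which one checks on the basis $(u^3-u),\eta_1^u,\eta_2^u,\ldots$. Applied to $\Delta_x\Gen_g$ it says $\DD\Gen_g = [u^0]\bigl(\text{LHS of \eqref{ljchg}}\bigr) + (1-\gamma)c_0$. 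For $g\ge2$ the right-hand side of \eqref{ljchg} is $\Delta_x^2\Gen_{g-1}+\sum_{g'=1}^{g-1}\Delta_x\Gen_{g'}\,\Delta_x\Gen_{g-g'}$, which by \autoref{thm:wwwform} and \autoref{cor:deltasubspace} lies in $(u^3-u)^2(1-\eta)^{2-2g}P[u^2]$ and hence has vanishing $[u^0]$; this is exactly your missing identity. For $g=1$ that right-hand side is $\Delta_x^2\Gen_0=\tfrac{1}{16}(u^2-1)^2$, whose $[u^0]$ is $\tfrac{1}{16}$, matching your observation. Once the $\gamma$-term is gone, the paper finishes by noting that the remaining operator sends $(u^3-u)\mapsto 0$ and $\eta_j^u\mapsto\eta_{j-1}$, which immediately gives membership in $(1-\eta)^{-2}Q$; your argument via $E$ reaches the same conclusion.
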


\begin{proof}
  It follows from \eqref{etod} that
  \begin{align}
    \sum_{k \geq 1} \E_k \Gen_g
      &= \left(\sum_{k \geq 1} \D_k - \frac{2\gamma}{1 - \gamma} \DD\right) \Gen_g \\
      &= \Pi_x\left( \left(x\diff{x}\right)^{-1} - \frac{2\gamma}{1 - \gamma} \right) \Delta_x \Gen_g \\
      &= \Pi_x\left( \left(x\diff{x}\right)^{-1} - \frac{2\gamma}{1 - \gamma} \right) \Delta_x \Gen_g.
  \end{align}
  We can replace the term $\Pi_x \left( \frac{2\gamma}{1 - \gamma} \right) \Delta_x \Gen_g$ in this expression by using the operator identity
  \begin{equation}\Pi_x = [u^0] \left( 1 - 2 \Delta_x \Gen_0 - \Split_{x \to y} \right) - (1 - \gamma) [u^1],\end{equation}
  which can be checked on the $Q$-linear space $(u^3 - u)Q[u^2]$ by verifying it on the basis $(u^3 - u), \eta_1^u, \eta_2^u, \ldots$. Then, we have
  \begin{align}
    \sum_{k \geq 1} \E_k \Gen_g
      &= \Pi_x\left( \left(x\diff{x}\right)^{-1} - 2(u - 1) [u^1] \right) \Delta_x \Gen_g \\
      &{} - \frac{2\gamma}{1 - \gamma} [u^0] \left( 1 - 2 \Delta_x \Gen_0 - \Split_{x \to y} \right) \Delta_x \Gen_g.
  \end{align}
  For $g = 1$, we can use the expression for $\Delta_x \Gen_1$ from \autoref{thm:wwwform} to compute
  \begin{equation}\sum_{k \geq 1} \E_k \Gen_1 = \frac{\eta}{24(1 - \eta)} + \frac{\gamma}{8(1 - \gamma)}\end{equation}
  from this. For $g \geq 2$, we have
  \begin{align}
    \left( 1 - 2 \Delta_x \Gen_0 - \Split_{x \to y} \right) \Delta_x \Gen_g
      &= \Delta_x^2 \Gen_{g-1} + \sum_{g'=1}^{g-1} \Delta_x \Gen_{g'} \, \Delta_x \Gen_{g-g'} \\
      &\in (u^3 - u)^2 (1 - \eta)^{2-2g} P[u^2],
  \end{align}
  so in particular, this has no constant term as a polynomial in $u$. Thus, for $g \geq 2$,
  \begin{equation}\sum_{k \geq 1} \E_k \Gen_g = \Pi_x\left( \left(x\diff{x}\right)^{-1} - 2(u - 1) [u^1] \right) \Delta_x \Gen_g.\end{equation}
  Also, we have
  \begin{align}
    \Pi_x \left( \left(x\diff{x}\right)^{-1} - 2(u - 1) [u^1] \right)(u^3 - u) &= 0, \\
    \Pi_x \left( \left(x\diff{x}\right)^{-1} - 2(u - 1) [u^1] \right) \eta_j^u &= \eta_{j-1},
  \end{align}
  for $j \geq 1$. This determines the action of this $Q$-linear operator on $(u^3 - u)Q[u^2]$. Since
  \begin{equation}
    \Delta_x \Gen_g \in (u^3 - u) (1 - \eta)^{-3} Q[u^2]
  \end{equation}
  for $g \geq 2$ and
  \begin{equation}\frac{\eta_0}{1 - \eta} = 1 - \frac{1}{1 - \eta},\end{equation}
  we get
  \begin{equation}\sum_{k \geq 1} \E_k \Gen_g \in (1 - \eta)^{-2} Q.\qedhere\end{equation}
\end{proof}

\begin{theorem}\label{thm:gggform}
  We have
  \begin{equation}\Gen_1 = \tfrac{1}{24} \log (1 - \eta)^{-1} - \tfrac{1}{8} \log (1 - \gamma)^{-1},\end{equation}
  and for $g \geq 2$,
  \begin{equation}\Gen_g = -c_{g,(0)} + \frac{1}{(1 - \eta)^{2g - 2}} \sum_{d = 0}^{3g - 3} \sum_{\alpha \vdash d} \frac{c_{g,\alpha} \eta_\alpha}{(1 - \eta)^\ell},\end{equation}
  where $c_{g,(0)}=-\frac{B_{2g}}{4g(g-1)}.$
\end{theorem}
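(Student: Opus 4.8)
\textit{Proof proposal.} The plan is to reconstruct $\Gen_g$ from the expression for $\sum_{k \geq 1} \E_k \Gen_g$ supplied by Theorem~\ref{thm:egggform}, by inverting the operator $\sum_{k \geq 1} \E_k$ and then fixing the single remaining constant with the one-part formula of \cite{MN}. Since $\sum_{k \geq 1} \E_k$ is diagonal in the basis $\{q_\alpha\}$ with eigenvalue $\ell(\alpha)$, it is injective on power series with no constant term, and there invertible by $q_\alpha \mapsto \ell(\alpha)^{-1} q_\alpha$; the real work is to show that this inverse respects the subrings $P \subset Q$ and the weight $\nu$.

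For $g = 1$ I would argue directly. As $\sum_{k \geq 1} \E_k$ is a derivation with $\sum_{k \geq 1} \E_k \eta = \eta$ and $\sum_{k \geq 1} \E_k \gamma = \gamma$ (both $\eta$ and $\gamma$ being linear in the $q_k$), one gets $\sum_{k \geq 1} \E_k \log (1-\eta)^{-1} = \eta(1-\eta)^{-1}$ and $\sum_{k \geq 1} \E_k \log (1-\gamma)^{-1} = \gamma(1-\gamma)^{-1}$, so applying $\sum_{k \geq 1} \E_k$ to the proposed closed form $\tfrac{1}{24}\log(1-\eta)^{-1} - \tfrac18\log(1-\gamma)^{-1}$ reproduces $\sum_{k \geq 1} \E_k \Gen_1$ as computed in Theorem~\ref{thm:egggform}. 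Both expressions vanish at $q=0$, so injectivity of $\sum_{k \geq 1} \E_k$ on series without constant term forces the equality.

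For $g \geq 2$, Theorem~\ref{thm:egggform} gives $\sum_{k \geq 1} \E_k \Gen_g \in (1-\eta)^{-2} Q$, and tracking the bound $\nu(\Delta_x \Gen_g) \leq 3g-2$ of Theorem~\ref{thm:wwwform} through $\sum_{k \geq 1}\E_k\Gen_g = \Pi_x\left(\left(x\diff{x}\right)^{-1} - 2(u-1)[u^1]\right)\Delta_x\Gen_g$ (under which $u^3-u \mapsto 0$ and $\eta_j^u \mapsto \eta_{j-1}$) shows $\nu(\sum_{k \geq 1}\E_k\Gen_g) \leq 3g-3$. To invert $\sum_{k \geq 1}\E_k$ while controlling the answer, note that it preserves the weight grading and maps $Q$ to itself, so it suffices to analyse its restriction to the span $S_\beta$ of $\{\eta_\beta(1-\eta)^{-i} : i \geq \ell(\beta)\}$ for each partition $\beta$ (with $S_\varnothing$ the span of the $(1-\eta)^{-i}$); there it acts bidiagonally, $\eta_\beta(1-\eta)^{-i} \mapsto (\ell(\beta)-i)\eta_\beta(1-\eta)^{-i} + i\,\eta_\beta(1-\eta)^{-i-1}$, and inverting this bidiagonal action lowers the power of $(1-\eta)^{-1}$ on the components carrying an $\eta$-factor. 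The point is that $\sum_{k \geq 1}\E_g\Gen_g$ is not a generic element of $(1-\eta)^{-2}Q$: inheriting its shape from $\Delta_x\Gen_g$ via the displayed formula, in each $S_\beta$ it has nonzero components only at indices $\ell(\beta)+2g-2$ and $\ell(\beta)+2g-1$, and a computation with Theorem~\ref{thm:wwwform} shows these two components occur with exactly the ratio needed for the antiderivative to lie in $(1-\eta)^{2-2g}P$. Since $\Gen_g$ has no constant term, the antiderivative is $\Gen_g$ up to an additive constant, which must be $-c_{g,(0)}$ where $c_{g,(0)}$ is the constant term of the $(1-\eta)^{2-2g}P$-part produced; reading off the $P$-monomials $\eta_\alpha(1-\eta)^{-\ell(\alpha)}$ yields the claimed form with rational $c_{g,\alpha}$, the double sum being cut to $d \leq 3g-3$ by the weight bound.

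It remains to identify $c_{g,(0)}$, which the recursion above determines as \emph{some} rational number. Extracting from the structural form the part linear in $q_1, q_2, \dots$ gives $[q_d]\Gen_g = (2d+1)\binom{2d}{d}\big((2g-2)c_{g,(0)} + \sum_{m=1}^{3g-3}c_{g,(m)}d^m\big)$, since linearly in $q$ only $\alpha = \varnothing$ (through the term $(2g-2)\eta$ in the expansion of $(1-\eta)^{2-2g}$) and the one-part $\alpha = (m)$ (through $[q_d]\eta_m = d^m(2d+1)\binom{2d}{d}$) contribute. On the other hand, the change of variables $p_k = q_k(1-\gamma)^{2k}$ has $q_k = p_k + (\text{terms of } p\text{-degree} \geq 2)$, so $[q_d]\Gen_g = [p_d]\Gen_g = \mon_g((d))/d!$, and $\mon_g((d))$ is given explicitly by the formula of \cite{MN}. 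Comparing, the polynomial $d \mapsto d!\,\binom{2d}{d}^{-1}(2d+1)^{-1}\mon_g((d))$ takes the value $(2g-2)c_{g,(0)}$ at $d = 0$, and the \cite{MN} formula evaluates this to $-B_{2g}/(2g)$, whence $c_{g,(0)} = -B_{2g}/\big(2g(2g-2)\big) = -B_{2g}/\big(4g(g-1)\big)$. I expect the main obstacle to be the genus-$\geq 2$ structural step: making sure the inversion of $\sum_{k \geq 1}\E_k$ lands inside $(1-\eta)^{2-2g}P$ (modulo a constant) rather than producing a bare power series, which rests on the bidiagonal analysis together with the precise shape of $\sum_{k \geq 1}\E_k\Gen_g$ inherited from Theorem~\ref{thm:wwwform}.
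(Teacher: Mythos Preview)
Your strategy—invert $\sum_k \E_k$ on the output of Theorem~\ref{thm:egggform}, then pin down the constant with the one-part formula of \cite{MN}—is exactly the paper's, and your treatment of $g=1$ and of the constant $c_{g,(0)}$ is correct and matches the paper (modulo a typo: your ``polynomial $d\mapsto d!\binom{2d}{d}^{-1}(2d+1)^{-1}\mon_g((d))$'' should have $(d!)^{-1}$, not $d!$).

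The genuine gap is the $g\geq 2$ structural step. Your bidiagonal analysis is correct as far as it goes: the action $\eta_\beta(1-\eta)^{-i}\mapsto(\ell(\beta)-i)\eta_\beta(1-\eta)^{-i}+i\,\eta_\beta(1-\eta)^{-i-1}$ is right, and you correctly observe that the $S_\beta$-components of $\sum_k\E_k\Gen_g$ sit only at indices $\ell(\beta)+2g-2$ and $\ell(\beta)+2g-1$. But the assertion that ``a computation with Theorem~\ref{thm:wwwform} shows these two components occur with exactly the ratio needed'' is not substantiated, and in fact cannot be read off from the statement of Theorem~\ref{thm:wwwform}, which only gives membership in $(u^3-u)(1-\eta)^{1-2g}P[u^2]$ and a weight bound. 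Solving your bidiagonal recurrence downward from large $i$ yields an antiderivative in $Q$ (so $\Gen_g-c_g\in Q$ for some constant $c_g$), but without the ratio identity you cannot force it to collapse to the single index $\ell(\beta)+2g-2$; generically the solution has nonzero $b_i$ for all $\ell(\beta)\leq i\leq \ell(\beta)+2g-2$.

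The paper closes this gap differently, and the argument is worth knowing because it avoids the ratio computation entirely. First, the integral
\[
\Big(\sum_k\E_k\Big)^{-1}\frac{\eta_\alpha}{(1-\eta)^{j+\ell}}=\int_0^1\frac{\eta_\alpha t^\ell}{(1-\eta t)^{j+\ell}}\,\frac{dt}{t}
\]
lands in $(1-\eta)^{-1}Q$ whenever $j\geq 2$, so $\Gen_g-c_g\in(1-\eta)^{-1}Q\subset Q$. Now use $\Delta_x$ rather than $\sum_k\E_k$: writing $\Gen_g-c_g=\sum_{j\geq 0}A_j$ with $A_j\in(1-\eta)^{-j}P$ (the direct sum decomposition $Q=\bigoplus_j(1-\eta)^{-j}P$), Corollary~\ref{cor:deltasubspace} gives $\Delta_x A_j\in(u^3-u)(1-\eta)^{-j-1}P[u^2]$, and these target spaces are independent. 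Since $\Delta_x\Gen_g\in(u^3-u)(1-\eta)^{1-2g}P[u^2]$ by Theorem~\ref{thm:wwwform}, every $A_j$ with $j\neq 2g-2$ lies in $\ker\Delta_x=\QQ$, hence vanishes (for $j\geq 1$) by the directness of the sum. This forces $\Gen_g-c_g\in(1-\eta)^{2-2g}P$ with no ratio computation at all, and the weight bound then follows from Lemma~\ref{lem:comp}.
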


\begin{proof}
  Note that $\gamma, \eta, \eta_1, \eta_2, \ldots$ are all eigenvectors of the differential operator $\sum_{k \geq 1} \E_k$ with eigenvalue 1, since they are purely linear in $q_1, q_2, \ldots$. Thus, up to a constant, we have
  \begin{align}
    \left( \sum_{k \geq 1} \E_k \right)^{-1} \frac{\eta}{1 - \eta}
      &= \int_0^1 \frac{\eta t}{1 - \eta t} \frac{dt}{t} = \log (1 - \eta)^{-1} \\
    \left( \sum_{k \geq 1} \E_k \right)^{-1} \frac{\gamma}{1 - \gamma}
      &= \int_0^1 \frac{\gamma t}{1 - \gamma t} \frac{dt}{t} = \log (1 - \gamma)^{-1}
  \end{align}
  Together with the constraint that $\Gen_1$ has no constant term, we get
  \begin{equation}\Gen_1 = \tfrac{1}{24} \log (1 - \eta)^{-1} - \tfrac{1}{8} \log (1 - \gamma)^{-1}.\end{equation}
  When $j \geq 2$, we have
  \begin{equation}\left( \sum_{k \geq 1} \E_k \right)^{-1} \frac{\eta_\alpha}{(1 - \eta)^{j+\ell}} = \int_0^1 \frac{\eta_\alpha t^\ell}{(1 - \eta t)^{j+\ell}} \frac{dt}{t} \in (1 - \eta)^{-1} Q,\end{equation}
  so for some constant $c_g$, for $g \geq 2$, we get
  \begin{equation}\Gen_g - c_g \in (1 - \eta)^{-1} Q.\end{equation}
  In fact, since the kernel of $\Delta_x$ on $Q$ is $\QQ$ and $\Delta_x \Gen_g \in (u^3 - u) (1 - \eta)^{1 - 2g} P[u^2]$, it follows from \autoref{cor:deltasubspace} that $\Gen_g - c_g \in (1 - \eta)^{2 - 2g} P$. Since $\nu(\Delta_x \Gen_g) \leq 3g - 2$, it follows from \autoref{lem:comp} that $\nu(\Gen_g) \leq 3g - 3$. Thus, we can write
  \begin{equation}\Gen_g = c_g + \frac{1}{(1 - \eta)^{2g - 2}} \sum_{d = 0}^{3g - 3} \sum_{\alpha \vdash d} \frac{c_{g,\alpha} \eta_\alpha}{(1 - \eta)^\ell},\end{equation}
  where the coefficients $c_{g,\alpha}$ are rational numbers. All that remains is to compute the value of $c_g$. The only other term of the sum which contributes a constant term is $c_{g,\varepsilon} (1 - \eta)^{2 - 2g}$, so we must have $c_g = -c_{g,\varepsilon}$. If we expand $\Gen_g$ as a power series in $p_1, p_2, \ldots$ and keep only the constant and linear terms, we get
  \begin{align}
    \Gen_g
      &= c_g + c_{g,\varepsilon} + (2g - 2) c_{g,\varepsilon} \eta + \sum_{k = 1}^{N_g} c_{g,(k)} \eta_k + \bigoh \\
      &= \sum_{d \geq 1} \left( (2g - 2) c_{g,\varepsilon} + \sum_{k = 1}^{N_g} c_{g,(k)} d^k \right) (2d + 1) \binom{2d}{d} p_d + \bigoh.
  \end{align}
  For fixed $g \geq 2$, the expression
  \begin{equation}f(d) = (2g - 2) c_{g,\varepsilon} + \sum_{k = 1}^{N_g} c_{g,(k)} d^k\end{equation}
  is a polynomial in $d$, and $f(0) / (2 - 2g) = c_g$.
  
  By \cite{MN}, the number of transitive monotone factorizations of genus $g$ of the cycle $(1 \, 2 \, \ldots \, d)$, or any other cycle of length $d$, is
  \begin{equation}\frac{1}{d} \binom{2d-2}{d-1} \binom{2d-2 + 2g}{2d-2} \left[\frac{z^{2g}}{(2g)!}\right] \left(\frac{\sinh(z/2)}{z/2}\right)^{2d-2}.\end{equation}
  There are $(d-1)!$ of these long cycles, so the coefficient of $p_d$ in $\Gen_g$ is
  \begin{equation}\frac{(d-1)!}{d!} \frac{1}{d} \binom{2d-2}{d-1} \binom{2d-2 + 2g}{2d-2} \left[\frac{z^{2g}}{(2g)!}\right] \left(\frac{\sinh(z/2)}{z/2}\right)^{2d-2}.\end{equation}
  Comparing this to the expression above and expanding the binomial coefficients into factorials gives
  \begin{equation}\frac{f(d)}{2 - 2g} = \frac{(2d - 2 + 2g)!}{(2 - 2g) (2d + 1) (2d)! (2g)!} \left[\frac{z^{2g}}{(2g)!}\right] \left(\frac{\sinh(z/2)}{z/2}\right)^{2d-2}.\end{equation}
  For fixed $g$, the coefficient of $z^{2g}/(2g)!$ extracted here is a polynomial in $d$, and setting $d = 0$, we get
  \begin{align}
    c_g = \frac{f(0)}{2 - 2g}
      &= \frac{1}{4g(g-1)(1-2g)} \left[\frac{z^{2g}}{(2g)!}\right] \left(\frac{\sinh(z/2)}{z/2}\right)^{-2} \\
      &= \frac{1}{4g(g-1)(1-2g)} \left[\frac{z^{2g}}{(2g)!}\right] \frac{z^2 e^z}{(e^z - 1)^2} \\
      &= \frac{1}{4g(g-1)(1-2g)} \left[\frac{z^{2g}}{(2g)!}\right] \left(1 - z\diff{z}\right) \frac{z}{e^z - 1} \\
      &= \frac{1}{4g(g-1)(1-2g)} \cdot (1 - 2g) B_{2g} \\
      &= \frac{B_{2g}}{4g(g-1)},
  \end{align}
  since $z / (e^z - 1)$ is the generating series for the Bernoulli numbers.
\end{proof}

\medskip
\noindent
{\em Proof of Theorem \ref{thm:goseries}}.
The result follows immediately from Theorem~\ref{thm:gggform} and Definition~\ref{def:Gseries}.$\hfill\Box$

\medskip

\section{Lagrange inversion and polynomiality}\label{sec:lagrange}

In this section, we use Lagrange inversion to derive an explicit formula for the genus one monotone single Hurwitz numbers (\autoref{thm:goformula}) from the generating series given in \autoref{thm:goseries}, and to establish a polynomiality result for higher genera (\autoref{thm:polynomial}).


For $k \geq 1$, let $\Theta_k \colon R \to R$ be the differential operator
\begin{equation}\Theta_k = \binom{2k}{k}^{-1} \diff{q_k}.\end{equation}
Then, we have
\begin{equation}\Theta_k(\gamma) = 1, \qquad \Theta_k(\eta) = 2k + 1, \qquad \Theta_k(\eta_i) = (2k + 1) k^i.\end{equation}
The following lemma shows how this operator is related to the quantities described in Theorems~\ref{thm:goformula} and~\ref{thm:polynomial}.

\begin{lemma}\label{lem:coex}
  If $f \in R$ is a power series and $\alpha \vdash d \geq 0$, then
  \begin{equation}\frac{d! [q_\alpha] f}{\abs{\C_\alpha} \prod_{j=1}^\ell \alpha_j \binom{2\alpha_j}{\alpha_j}} = \Theta_\alpha \left. f \right|_{q_1 = q_2 = \cdots = 0}.\end{equation}
\end{lemma}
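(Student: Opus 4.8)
The plan is to prove the identity by a direct expansion, since both sides are linear in $f$ and can be checked on monomials. Write $m_i = m_i(\alpha)$ for the multiplicity of $i$ as a part of $\alpha$, so that $q_\alpha = \prod_{i \geq 1} q_i^{m_i}$ and $\Theta_\alpha = \Theta_{\alpha_1} \cdots \Theta_{\alpha_\ell} = \prod_{i \geq 1} \binom{2i}{i}^{-m_i} \left( \diff{q_i} \right)^{m_i}$, using that the operators $\diff{q_i}$ commute. First I would observe that applying $\prod_i \left( \diff{q_i} \right)^{m_i}$ to a monomial $\prod_i q_i^{n_i}$ and then setting all $q_j = 0$ yields $0$ unless $n_i = m_i$ for every $i$, in which case the output is $\prod_i m_i!$. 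Hence
\begin{equation*}
  \Theta_\alpha f \big|_{q_1 = q_2 = \cdots = 0} = \frac{\prod_{i \geq 1} m_i!}{\prod_{i \geq 1} \binom{2i}{i}^{m_i}} \, [q_\alpha] f.
\end{equation*}

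It then remains to check that the scalar prefactor on the left-hand side of the lemma equals $\frac{\prod_i m_i!}{\prod_i \binom{2i}{i}^{m_i}}$. For this I would use the standard formula $\abs{\C_\alpha} = \frac{d!}{\prod_{i \geq 1} i^{m_i} m_i!}$, which gives $\frac{d!}{\abs{\C_\alpha}} = \prod_i i^{m_i} m_i!$, together with $\prod_{j=1}^\ell \alpha_j \binom{2\alpha_j}{\alpha_j} = \prod_{i \geq 1} \left( i \binom{2i}{i} \right)^{m_i} = \prod_i i^{m_i} \binom{2i}{i}^{m_i}$. Dividing these two products, the powers of $i$ cancel and the prefactor collapses to exactly $\frac{\prod_i m_i!}{\prod_i \binom{2i}{i}^{m_i}}$, matching the display above. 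The degenerate case $\alpha = \varepsilon$, where both sides reduce to the constant term of $f$, is included in this.

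There is essentially no obstacle here beyond bookkeeping with multiplicities; the one point to watch is that $\Theta_\alpha$ is a product over the \emph{parts} of $\alpha$, so $\Theta_i$ occurs with exponent $m_i$, and it is precisely the factorials $\prod_i m_i!$ produced by iterated differentiation that convert the $\prod_i i^{m_i} m_i!$ appearing in $d!/\abs{\C_\alpha}$ into the product $\prod_{j=1}^\ell \alpha_j$ featured in the statement.
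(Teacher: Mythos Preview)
Your proof is correct and is essentially identical to the paper's: both arguments amount to the observation that $\Theta_\alpha f\big|_{q=0} = \bigl(\prod_i m_i!\bigr)\bigl(\prod_i \binom{2i}{i}^{-m_i}\bigr)[q_\alpha]f$ together with the conjugacy-class size formula $d!/\abs{\C_\alpha} = \prod_i i^{m_i} m_i!$ to match the scalar prefactor. The paper presents the same computation slightly more tersely, starting from the left-hand side rather than the right.
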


\begin{proof}
  Since
  \begin{equation}\frac{d!}{\abs{\C_\alpha}} = \prod_{j=1}^\ell \alpha_j \cdot \prod_{k \geq 1} m_k!,\end{equation}
  where $m_1, m_2, \ldots$ are the part multiplicities of $\alpha$, we have
  \begin{equation}\frac{d! [q_\alpha] f}{\abs{\C_\alpha} \prod_{j=1}^\ell \alpha_j \binom{2\alpha_j}{\alpha_j}}
    = \prod_{j=1}^\ell \binom{2\alpha_j}{\alpha_j}^{-1} \left[ \frac{q_1^{m_1}}{m_1!} \frac{q_2^{m_2}}{m_2!} \cdots \right] f
    = \Theta_\alpha \left. f \right|_{q_1 = q_2 = \cdots = 0}.\qedhere\end{equation}

\end{proof}

\begin{proof}[Proof of \autoref{thm:goformula}]
  To show that
  \begin{equation}\frac{24 \mon_1(\alpha)}{\abs{\C_\alpha} \prod_{i = 1}^\ell \alpha_i \binom{2\alpha_i}{\alpha_i}} = (2d + 1)^{\overline{\ell}} - 3 (2d + 1)^{\overline{\ell - 1}} - \sum_{k = 2}^\ell (k - 2)! (2d + 1)^{\overline{\ell - k}} e_k(2\alpha + 1),\end{equation}
  we apply Lagrange inversion to the generating series
  \begin{equation}\Gen_1 = \tfrac{1}{24} \log (1 - \eta)^{-1} - \tfrac{1}{8} \log (1 - \gamma)^{-1}\end{equation}
  from \autoref{thm:gggform}, expand the result as a power series in $\gamma$ and $\eta$, and apply $\Theta_\alpha$ to each term.
  By \autoref{thm:LIFT}, we have
  \begin{multline}
    [p_\alpha] \log (1 - \eta)^{-1} \\
    \begin{aligned}
      &= [q_\alpha] \log (1 - \eta)^{-1} (1 - \eta) (1 - \gamma)^{-2d-1} \\
      &= [q_\alpha] \left( \eta - \sum_{k \geq 2} \frac{\eta^k}{k(k - 1)} \right) \left( \sum_{j \geq 0} \binom{-2d-1}{j} (-1)^j \gamma^j \right) \\
      &= [q_\alpha] \left( \eta - \sum_{k \geq 2} (k - 2)! \frac{\eta^k}{k!} \right) \left( \sum_{j \geq 0} (2d + 1)^{\overline{j}} \frac{\gamma^j}{j!} \right) \\
      &= [q_\alpha] \left( (2d + 1)^{\overline{\ell-1}} \eta \frac{\gamma^{\ell-1}}{(\ell-1)!} - \sum_{k = 2}^\ell (k - 2)! (2d + 1)^{\overline{\ell-k}} \frac{\eta^k}{k!} \frac{\gamma^{\ell-k}}{(\ell-k)!} \right).
    \end{aligned}
  \end{multline}
  Note that, by iterating the product rule, we get
  \begin{align}
    \Theta_\alpha \left( \frac{\eta^k}{k!} \frac{\gamma^{\ell-k}}{(\ell-k)!} \right)
      &= \sum_{1 \leq i_1 < \cdots < i_k \leq \ell} \frac{\Theta_{\alpha_{i_1}}(\eta) \cdots \Theta_{\alpha_{i_k}}(\eta)}{\Theta_{\alpha_{i_1}}(\gamma) \cdots \Theta_{\alpha_{i_k}}(\gamma)} \Theta_{\alpha_1}(\gamma) \cdots \Theta_{\alpha_\ell}(\gamma) \\
      &= \sum_{1 \leq i_1 < \cdots < i_k \leq \ell} (2\alpha_{i_1} + 1) (2\alpha_{i_2} + 1) \cdots (2\alpha_{i_k} + 1) \\
      &= e_k(2\alpha + 1),
  \end{align}
  and this is constant as a power series in $q_1, q_2, \ldots$, and hence unaffected by evaluation at $q_1 = q_2 = \cdots = 0$. Using \autoref{lem:coex}, we have
  \begin{multline}
    \frac{d! [p_\alpha] \log (1 - \eta)^{-1}}{\abs{\C_\alpha} \prod_{i = 1}^\ell \alpha_i \binom{2\alpha_i}{\alpha_i}} \\
    \begin{aligned}
      &= \Theta_\alpha \left( (2d + 1)^{\overline{\ell-1}} \eta \frac{\gamma^{\ell-1}}{(\ell-1)!} - \sum_{k = 2}^\ell (k - 2)! (2d + 1)^{\overline{\ell-k}} \frac{\eta^k}{k!} \frac{\gamma^{\ell-k}}{(\ell-k)!} \right) \\
      &= (2d + 1)^{\overline{\ell-1}} e_1(2\alpha + 1) - \sum_{k = 2}^\ell (k - 2)! (2d + 1)^{\overline{\ell-k}} e_k(2\alpha + 1) \\
      &= (2d + 1)^{\overline{\ell}} - \sum_{k = 2}^\ell (k - 2)! (2d + 1)^{\overline{\ell-k}} e_k(2\alpha + 1),
    \end{aligned}
  \end{multline}
  since $e_1(2\alpha + 1) = (2d + \ell)$. This gives most of the terms on the right-hand side of the genus one formula.
  
  For the remaining term, using \autoref{thm:LIFT} again and the fact that $p_\alpha, q_\alpha$ are eigenvectors for the operators $\DD, \EE$ respectively, we have
  \begin{align}
    [p_\alpha] \log (1 - \gamma)^{-1}
      &= \tfrac{1}{d} [p_\alpha] \DD \log (1 - \gamma)^{-1} \\
      &= \tfrac{1}{d} [q_\alpha] (1 - \gamma)^{-2d} \frac{1 - \eta}{1 - \gamma} \DD \log (1 - \gamma)^{-1} \\
      &= \tfrac{1}{d} [q_\alpha] (1 - \gamma)^{-2d} \EE \log (1 - \gamma)^{-1} \\
      &= \tfrac{1}{d} [q_\alpha] \EE \Big( \tfrac{1}{2d} (1 - \gamma)^{-2d} \Big) \\
      &= [q_\alpha] \tfrac{1}{2d} (1 - \gamma)^{-2d} \\
      &= [q_\alpha] (2d + 1)^{\overline{\ell-1}} \frac{\gamma^\ell}{\ell!}.
  \end{align}
  Applying \autoref{lem:coex}, we get
  \begin{align}
    \frac{d! [p_\alpha] \log (1 - \gamma)^{-1}}{\abs{\C_\alpha} \prod_{i = 1}^\ell \alpha_i \binom{2\alpha_i}{\alpha_i}}
      &= \Theta_\alpha \left( (2d + 1)^{\overline{\ell-1}} \frac{\gamma^\ell}{\ell!} \right) \\
      &= (2d + 1)^{\overline{\ell-1}}.
  \end{align}
  This gives the remaining term on the right-hand side of the genus one formula.
\end{proof}

Using similar techniques, we can get a polynomiality result for monotone single Hurwitz numbers for fixed genus and number of parts.

\begin{proof}[Proof of \autoref{thm:polynomial}]
  Noting that $d = \sum_{j=1}^\ell \alpha_j$, the explicit formulas for genus zero and one from Theorems~\ref{thm:gzformula} and~\ref{thm:goformula} show that the expression
  \begin{equation}\frac{\mon_g((\alpha_1, \alpha_2, \ldots, \alpha_\ell))}{\abs{\C_\alpha} \prod_{i=1}^\ell \alpha_i \binom{2\alpha_i}{\alpha_i}}\end{equation}
  is a polynomial in $\alpha_1, \alpha_2, \ldots, \alpha_\ell$ when $g = 0$ and $\ell \geq 3$, and when $g = 1$ and $\ell \geq 1$. For $g \geq 2$, we have the rational form
  \begin{equation}\Gen_g = -c_{g,(0)} + \frac{1}{(1 - \eta)^{2g - 2}} \sum_{d = 0}^{3g - 3} \sum_{\alpha \vdash d} \frac{c_{g,\alpha} \eta_\alpha}{(1 - \eta)^\ell}.\end{equation}
  As in the proof of \autoref{thm:goformula}, we can apply Lagrange inversion to each of the terms, expand the result as power series in $\gamma, \eta, \eta_1, \eta_2, \ldots$, and extract coefficients using \autoref{lem:coex}.
  
  By \autoref{thm:LIFT}, we have
  \begin{equation}[p_\alpha] \frac{\eta_1^{a_1} \eta_2^{a_2} \cdots \eta_k^{a_k}}{(1 - \eta)^j} = [q_\alpha] \frac{\eta_1^{a_1} \eta_2^{a_2} \cdots \eta_k^{a_k}}{(1 - \eta)^{j-1} (1 - \gamma)^{2d + 1}}.\end{equation}
  This can be expanded as an infinite linear combination of terms of the form
  \begin{equation}[q_\alpha] \frac{\eta_0^{a_0} \eta_1^{a_1} \cdots \eta_k^{a_k}}{(1 - \gamma)^{2d + 1}},\end{equation}
  but only the finitely many terms with $a_0 + a_1 + \cdots + a_k \leq \ell$ have a nonzero contribution. For these terms, applying \autoref{lem:coex} gives
  \begin{multline}
    \frac{d! [q_\alpha]}{\abs{\C_\alpha} \prod_{i = 1}^\ell \alpha_i \binom{2\alpha_i}{\alpha_i}} \left( \frac{\eta_0^{a_0} \cdots \eta_k^{a_k}}{(1 - \gamma)^{2d + 1}} \right) \\
    \begin{aligned}
      &= \frac{d! [q_\alpha]}{\abs{\C_\alpha} \prod_{i = 1}^\ell \alpha_i \binom{2\alpha_i}{\alpha_i}} \left( (2d + 1)^{\overline{\ell - \sum_i a_i}} \eta_0^{a_0} \cdots \eta_k^{a_k} \frac{\gamma^{\ell - \sum_i a_i}}{(\ell - \sum_i a_i)!} \right) \\
      &= (2d + 1)^{\overline{\ell - \sum_i a_i}} \Theta_\alpha \left(\eta_0^{a_0} \cdots \eta_k^{a_k} \frac{\gamma^{\ell - \sum_i a_i}}{(\ell - \sum_i a_i)!} \right),
    \end{aligned}
  \end{multline}
  which is a symmetric polynomial in $\alpha_1, \alpha_2, \ldots, \alpha_\ell$. For fixed $\ell$,
  \begin{equation}\frac{\mon_g((\alpha_1, \alpha_2, \ldots, \alpha_\ell))}{\abs{\C_\alpha} \prod_{i=1}^\ell \alpha_i \binom{2\alpha_i}{\alpha_i}} = \frac{d! [p_\alpha] \Gen_g}{\abs{\C_\alpha} \prod_{i = 1}^\ell \alpha_i \binom{2\alpha_i}{\alpha_i}}\end{equation}
  is a finite linear combination of these polynomials, so it is a polynomial in $\alpha_1, \alpha_2, \ldots, \alpha_\ell$.
\end{proof}

\appendix
\section{Rational forms for $g=2$ and $3$}\label{sec:forms}


The following equations give the rational forms for the genus two and three generating series for the monotone single Hurwitz numbers, as described in \autoref{thm:goseries}.

Genus two:
\begin{equation}720 \vec{\mathbf{H}}_2 = -3 + \frac{3}{(1 - \eta)^2} + \frac{5 \eta_3 - 6 \eta_2 - 5 \eta_1}{(1 - \eta)^3} + \frac{29 \eta_2 \eta_1 - 10 \eta_1^2}{(1 - \eta)^4} + \frac{28 \eta_1^3}{(1 - \eta)^5}.\end{equation}

Genus three:
\begin{multline}
90720 \vec{\mathbf{H}}_3 = 90 + \frac{-90}{(1 - \eta)^4}
+ \frac{70 \eta_6 + 63 \eta_5 - 377 \eta_4 - 189 \eta_3 + 667 \eta_2 + 126 \eta_1}{(1 - \eta)^5} \\
{} + \frac{1078 \eta_1 \eta_5 + 2012 \eta_2 \eta_4 + 1209 \eta_1 \eta_4 + 1214 \eta_3^2}{(1 - \eta)^6} \\
{} + \frac{1998 \eta_2 \eta_3 - 3914 \eta_1 \eta_3 - 2627 \eta_2^2 - 2577 \eta_1 \eta_2 + 1967 \eta_1^2}{(1 - \eta)^6} \\
{} + \frac{8568 \eta_1^2 \eta_4 + 26904 \eta_1 \eta_2 \eta_3 + 10092 \eta_1^2 \eta_3 + 5830 \eta_2^3}{(1 - \eta)^7} \\
{} + \frac{13440 \eta_1 \eta_2^2 - 20322 \eta_1^2 \eta_2 - 4352 \eta_1^3}{(1 - \eta)^7} \\
{} + \frac{44520 \eta_1^3 \eta_3 + 86100 \eta_1^2 \eta_2^2 + 49980 \eta_1^3 \eta_2 - 15750 \eta_1^4}{(1 - \eta)^8} \\
{} + \frac{162120 \eta_1^4 \eta_2 + 31080 \eta_1^5}{(1 - \eta)^9}
+ \frac{68600 \eta_1^6}{(1 - \eta)^{10}}.
\end{multline}

\bibliographystyle{amsplain}

\end{document}